\DeclareMathOperator*{\esssup}{ess\,sup}
\DeclareMathOperator*{\osc}{osc}
\numberwithin{equation}{section}
\newtheorem{theorem}{Theorem}[section]
\newtheorem{lemma}{Lemma}[section]
\newtheorem{remark}{Remark}[section]
\def\XXint#1#2#3{{\setbox0=\hbox{$#1{#2#3}{\int}$}
     \vcenter{\hbox{$#2#3$}}\kern-.5\wd0}}
\begin{document}

\title{ Continuity and Harnack inequalities for local minimizers of non uniformly elliptic functionals with generalized Orlicz growth under the non-logarithmic conditions
\thanks{Dedicated to Dr. Mykhailo Voitovych, missing in the hero-city of Mariupol}
}

\author{ Maria O. Savchenko, Igor I. Skrypnik, Yevgeniia A.Yevgenieva
 }

  \maketitle

  \begin{abstract} We study the qualitative properties of functions belonging to the corresponding De Giorgi classes
	\begin{equation*}
	\int\limits_{B_{r(1-\sigma)}(x_{0})}\,\varPhi(x, |\nabla(u-k)_{\pm}|)\,dx
	\leqslant \gamma\,\int\limits_{B_{r}(x_{0})}\,\varPhi\bigg(x, \frac{(u-k)_{\pm}}{\sigma r}\bigg)\,dx,
	\end{equation*}
where $\sigma$, $r \in (0,1)$, $k\in \mathbb{R}$ and the function
$\varPhi$ satisfies the non-logarithmic condition
\begin{equation*}
\bigg(r^{-n}\int\limits_{B_{r}(x_{0})}[\varPhi\big(x,\frac{v}{r}\big)]^{s}\,dx\bigg)^{\frac{1}{s}}\bigg(r^{-n}\int\limits_{B_{r}(x_{0})}[\varPhi\big(x,\frac{v}{r}\big)]^{-t}\,dx\bigg)^{\frac{1}{t}}\leqslant c(K) \Lambda(x_{0},r),\quad r\leqslant v\leqslant K\,\lambda(r),
\end{equation*}
under some assumptions on the functions $\lambda(r)$ and $\Lambda(x_{0}, r)$ and the numbers $s$, $t >1$. These conditions generalize the known
logarithmic, non-logarithmic and non uniformly elliptic conditions.

In particular, our results cover new cases of non uniformly elliptic double-phase, degenerate double-phase functionals  and functionals with variable exponents.

\textbf{Keywords:}
non-autonomous functionals, non-logarithmic conditions, continuity, Harnack's inequality.

\textbf{MSC (2010)}: 35B40, 35B45, 35B65.


\end{abstract}

\pagestyle{myheadings} \thispagestyle{plain}
\markboth{Maria O. Savchenko, Igor I. Skrypnik, Yevgeniia A.Yevgenieva}
{Continuity and Harnack inequalities....}

\section{Introduction and main results}\label{Introduction}

To explain the point of view of this research consider the  energy integrals $\int\limits_{\Omega}\, \varPhi_{i}(x,|\nabla u|)dx,$\\
$\varPhi_{1}(x, v)= v^{p}+ a_{1}(x) v^{q},\quad a_{1}(x)\geqslant 0,\quad \quad \osc\limits_{B_{r}(x_{0})}a_{1}(x)\leqslant A\,r^{q-p},\quad A  >0,\quad v > 0.$\\
$\varPhi_{2}(x, v)= v^{p} \big(1+a_{2}(x) \log(1+v) \big), \quad \quad a_{2}(x)\geqslant 0,\quad \quad \osc\limits_{B_{r}(x_{0})}a_{2}(x)\leqslant \dfrac{A}{\log\frac{1}{r}},\quad A>0, \quad v>0$.\\
In particular, these conditions imply
\begin{equation}\label{eq1.1}
\sup\limits_{B_{r}(x_{0})} \varPhi_{i}\big(x, \frac{v}{r}\big) \leqslant \gamma(K)\,\inf\limits_{B_{r}(x_{0})} \varPhi_{i}\big(x, \frac{v}{r}\big), \quad r\leqslant v \leqslant K, \quad i=1,2.
\end{equation}

It is well known (see \cite{BarColMing}) that the  minimizers of the corresponding  integrals of the calculus of variations satisfy Harnack's type inequality, or more generally (see \cite{HadSkrVoi}), Harnack's type inequality is valid under the conditions
$$\osc\limits_{B_{r}(x_{0})} a_{1}(x) \leqslant A \,\big[\log\frac{1}{r}\big]^{L}\,r^{q-p} , \quad \osc\limits_{B_{r}(x_{0})} a_{2}(x) \leqslant A \frac{\big[\log\log\frac{1}{r}\big]^{L}}{\log\frac{1}{r}},$$
if $L >0$ is  sufficiently small. These conditions  yield
\begin{equation}\label{eq1.2}
\sup\limits_{B_{r}(x_{0})} \varPhi_{1}\big(x, \frac{v}{r}\big) \leqslant \gamma(K)\,\inf\limits_{B_{r}(x_{0})} \varPhi_{1}\big(x, \frac{v}{r}\big), \quad r\leqslant v \leqslant K\,\lambda(r), \quad \lambda(r)=\big[\log\frac{1}{r}\big]^{-\frac{L}{q-p}},
\end{equation}
and
\begin{equation}\label{eq1.3}
\sup\limits_{B_{r}(x_{0})} \varPhi_{2}\big(x, \frac{v}{r}\big) \leqslant \gamma(K)\,\Lambda(r)\,\inf\limits_{B_{r}(x_{0})} \varPhi_{2}\big(x, \frac{v}{r}\big), \quad r\leqslant v \leqslant K,\quad \Lambda(r)=\big[\log\log\frac{1}{r}\big]^{L}.
\end{equation}

To take into account the non-uniformly elliptic case, we set
$$a(x)= \big|\log\big|\log\frac{1}{|x-x_{0}|}\big|\big|^{L_{1}},\quad x_{0}\in \Omega,$$
and let $\varPhi_{1}(v)=v^{p} +v^{q}$,\,\, $\varPhi_{2}(v)=v^{p}\big(1+ \log(1+v)\big)$, then
$$\gamma^{-1}\,a(x)\varPhi_{i}(v)\leqslant \varPhi_{i}(x, v) \leqslant \gamma\,\varPhi_{i}(v),\quad L_{1} <0,\quad i=1,2,$$
$$\gamma^{-1}\,\varPhi_{i}(v)\leqslant \varPhi_{i}(x, v) \leqslant \gamma\,a(x)\,\varPhi_{i}(v),\quad L_{1} >0,\quad i=1,2$$
provided that $B_{r}(x_{0})\subset B_{R}(x_{0})\subset \Omega$ and $R$ is sufficiently small  and  the bounded local solutions of the corresponding elliptic equations satisfy  Harnack's type inequality \cite{HadSavSkrVoi} if
\begin{equation}\label{eq1.4}
\dfrac{1}{a(x)} \in L^{t}(\Omega)\quad \text{and}\quad a(x)\in L^{s}(\Omega)
\end{equation}
with some $t$, $s >1$, i.e. if $L_{1}$ is sufficiently small.  In this paper, our aim is to combine logarithmic, non-logarithmic and non uniformly elliptic conditions \eqref{eq1.1}--\eqref{eq1.4}. Obviously, conditions \eqref{eq1.1}--\eqref{eq1.4} imply for i=1,2
\begin{equation}\label{eq1.5}
\bigg(r^{-n}\int\limits_{B_{r}(x_{0})}[\varPhi_{i}\big(x,\frac{v}{r}\big)]^{s}\,dx\bigg)^{\frac{1}{s}}\bigg(r^{-n}\int\limits_{B_{r}(x_{0})}[\varPhi_{i}\big(x,\frac{v}{r}\big)]^{-t}\,dx\bigg)^{\frac{1}{t}}\leqslant \gamma(K) \Lambda(x_{0},r),\quad r\leqslant v\leqslant K\,\lambda(r),
\end{equation}
with some $s$, $t$ and the precise choice of $\lambda(r)$ and $\Lambda(x_{0}, r)$.

Another interesting example  is the energy integral $\int\limits_{\Omega} \varPhi_{3}(x, |\nabla u|)\,dx$,
\begin{equation*}
\varPhi_{3}(x, v)= v^{p(x)}, \quad  \osc\limits_{B_{r}(x_{0})} p(x) \leqslant \frac{\bar{\mu}(r)}{\log\frac{1}{r}},\quad
\lim\limits_{r\rightarrow 0}\bar{\mu}(r)=\infty,\quad \lim\limits_{r\rightarrow 0}\frac{\bar{\mu}(r)}{\log\frac{1}{r}}=0,\quad v > 0.
\end{equation*}
It is known that the solutions of the corresponding equations and the minimizers of the corresponding integrals satisfy the Harnack type  inequality (\cite{Alhutov97}) if $\mu(r)\equiv const$, or more generally (see \cite{AlkhSurnApplAn19, AlkhSurnJmathSci20, AlkhSurnAlgAn19, SurnPrepr2018}) if $\mu(r)=L \log\log\log\frac{1}{r}$, i.e. under conditions \eqref{eq1.3}. The bounded local solutions of the corresponding elliptic equations, as well as the  minimizers of the corresponding  integrals belong to the corresponding De Giorgi's classes, i.e. for $k\in \mathbb{R}$, $\sigma$, $r\in(0, 1)$ the following inequalities hold
\begin{equation*}
\int\limits_{B_{r(1-\sigma)}(x_{0})}|\nabla(u-k)_{\pm}|^{p(x)}\,dx\leqslant \gamma \int\limits_{B_{r}(x_{0})}\bigg(\frac{u-k}{\sigma r}\bigg)^{p(x)}_{\pm}\,dx.
\end{equation*}
Set $a_{\pm}(x, k, r)=\bigg(\dfrac{M_{\pm}(k, r)}{r}\bigg)^{p(x)-p_{-}},\quad M_{\pm}(k, r)=\sup\limits_{B_{r}(x_{0})}(u-k)_{\pm}$ and $p_{-}=\min\limits_{B_{r}(x_{0})} p(x)$, then by the Young inequality
\begin{equation*}
\int\limits_{B_{r(1-\sigma)}(x_{0})}a_{\pm}(x, k, r)|\nabla(u-k)_{\pm}|^{p_{-}}\,dx \leqslant \gamma\,\sigma^{-\gamma}\bigg(\frac{M_{\pm}(k, r)}{r}\bigg)^{p_{-}}\int\limits_{B_{r}(x_{0})\cap\{(u-k)_{\pm} > 0\}}a_{\pm}(x, k, r)\,dx.
\end{equation*}
There are two possibilities how to use this inequality. The first one is almost standard, by our assumptions on the function $p(x)$ we obtain
\begin{equation*}
\int\limits_{B_{r(1-\sigma)}(x_{0})}|\nabla(u-k)_{\pm}|^{p_{-}}\,dx \leqslant \gamma\,\sigma^{-\gamma}\,\exp(\gamma\,\bar{\mu}(r))\,\bigg(\frac{M_{\pm}(k, r)}{r}\bigg)^{p_{-}}\big|B_{r}(x_{0})\cap\{(u-k)_{\pm} > 0\}\big|,
\end{equation*}
provided that $M_{\pm}(k, r) \geqslant r$. This estimate leads us to the condition (see e.g. \cite{AlkhSurnApplAn19, AlkhSurnJmathSci20, AlkhSurnAlgAn19, SurnPrepr2018})
\begin{equation}\label{eq1.6}
\int\limits_{0} \exp\big(-\gamma_{1}\,\exp(\gamma_{2}\,\bar{\mu}(r)\,\big)\frac{dr}{r}=\infty,
\end{equation}
with some $\gamma_{1}, \gamma_{2} >0$. The function $\bar{\mu}(r)=L\log\log\log\frac{1}{r}$ satisfies \eqref{eq1.6} if $L >0$ is sufficiently small.

This condition can be improved, namely, it turns out that the function $a_{\pm}(x, k, r)$ with $p(x)= p+ L\dfrac{\log\log\frac{1}{|x-x_{0}|}}{\log\frac{1}{|x-x_{0}|}}$  satisfies the condition (see \cite{SkrYev})
\begin{equation}\label{eq1.7}
\bigg(r^{-n}\,\int\limits_{B_{r}(x_{0})} [a_{\pm}(x, k, r)]^{-t}\,dx\bigg)^{\frac{1}{t}}
\bigg(r^{-n}\,\int\limits_{B_{r}(x_{0})} [a_{\pm}(x, k, r)]^{s}\,dx\bigg)^{\frac{1}{s}}\leqslant \gamma(t,s),\quad t,s>0,
\end{equation}
provided that
\begin{equation*}
 \frac{1}{\log\log\frac{1}{16r}}+ \bar{\gamma}\,L\frac{\log\log\frac{1}{16r}}{\log\frac{1}{16r}}\leqslant 1,
\quad \text{and}\quad r\leqslant M_{\pm}(k,r) \leqslant 2M= 2 \sup\limits_{\Omega}|u|,
\end{equation*}
with sufficiently large $\bar{\gamma} > 0$. This condition leads us to the standard Harnack type inequality for solutions of the corresponding $p(x)$-Laplace equation. Obviously, inequalities \eqref{eq1.7} can be generalized by conditions \eqref{eq1.5}.

In this paper we also consider the integrals of this type. And, of course, it would be interesting to unify our approach. More precisely, we will prove continuity and Harnack's inequality for functions belonging to the corresponding non uniformly elliptic De Giorgi classes $DG_{\varPhi}(B_{R}(x_{0})).$

We write  $W^{1, \varPhi}(B_{R}(x_{0}))$ for the class of functions $u\in W^{1,1}(B_{R}(x_{0}))$ with \\
$\int\limits_{B_{R}(x_{0})} \varPhi(x, |\nabla u|)dx < \infty$ and we say that a measurable function $u:B_{R}(x_{0})\rightarrow \mathbb{R}$ belongs to the elliptic class $DG^{\pm}_{\varPhi}(B_{R}(x_{0}))$ if $u\in W^{1,\varPhi}(B_{R}(x_{0})) \cap L^{\infty}(B_{R}(x_{0}))$ and there exist numbers  $c >0$, $q>1$ such that for any ball $B_{r}(x_{0})\subset B_{R}(x_{0})$, any $k \in \mathbb{R}$ and any $ \sigma\in(0,1)$  the following inequalities hold:

\begin{equation}\label{eq1.8}
\int\limits_{A^{\pm}_{k,r}}  \varPhi\big(x,|\nabla u|\big) \zeta^{q}(x) dx \leqslant c \,\int\limits_{A^{\pm}_{k,r}}  \varPhi\bigg(x,\frac{(u-k)_{\pm}}{\sigma r}\bigg) dx ,
\end{equation}
here $(u-k)_{\pm}:=\max\{\pm(u-k), 0\}$,
$A^{\pm}_{k,r}:=B_{r}(x_{0})\cap \{(u-k)_{\pm}>0\}$, $\zeta(x) \in C^{\infty}_{0}(B_{r}(x_{0})),\\ 0\leqslant \zeta(x) \leqslant 1$, $\zeta(x) =1$ in $B_{(1-\sigma)r}(x_{0})$ and $| \nabla \zeta(x) | \leqslant \dfrac{1}{\sigma r}.$\quad We also say that $u\in DG^{\pm}_{\varPhi}(\Omega)$ if $u\in DG^{\pm}_{\varPhi}(B_{R}(x_{0}))$ for any
$B_{8R}(x_{0})\subset \Omega$. We set also $DG_{\varPhi}(B_{R}(x_{0}))=DG^{-}_{\varPhi}(B_{R}(x_{0}))\cup
DG^{+}_{\varPhi}(B_{R}(x_{0}))$ and $DG_{\varPhi}(\Omega)=DG^{-}_{\varPhi}(\Omega)\cup
DG^{+}_{\varPhi}(\Omega).$

Further we suppose that $\varPhi(x, v):B_{R}(x_{0})\times \mathbb{R}_{+}\rightarrow \mathbb{R}_{+}$ is a non-negative function satisfying the following properties: for any $x\in B_{R}(x_{0})$
 the function $ v\rightarrow \varPhi(x, v)$ is increasing and\\
 $\lim\limits_{ v\rightarrow0}\varPhi(x, v)=0$,
 $\lim\limits_{ v\rightarrow +\infty}\varPhi(x, v)=+\infty$. We also assume that
\begin{itemize}
\item[($\varPhi_{0}$)]
There exists $c_{0}\geqslant 1$ such that for any $x\in B_{R}(x_{0})$ there holds
\begin{equation*}
c^{-1}_{0} \leqslant \varPhi(x, 1) \leqslant c_{0}.
\end{equation*}
\end{itemize}

 \begin{itemize}
\item[($\varPhi$)]
There exist $1<p<q$ such that
for $x\in B_{R}(x_{0})$ and for $ w\geqslant v > 0$ there holds
\begin{equation*}
\left( \frac{w}{v} \right)^{p} \leqslant\frac{\varPhi(x, w)}{\varPhi(x, v)}\leqslant
 \left( \frac{w}{v} \right)^{q}.
\end{equation*}
\end{itemize}

\begin{itemize}
\item[\big($\varPhi^{\lambda}_{\Lambda, x_{0}}$\big)]
There exist   continuous, non-decreasing function
$0< \lambda(r) \leqslant 1$  and continuous, non-increasing function $\Lambda_{\lambda}(x_{0}, r)\geqslant 1$ on the interval $(0, R)$ such that for any $B_{r}(x_{0}) \subset B_{R}(x_{0})$, for any  $K > 0$   there holds
\begin{equation*}
\sup\limits_{r\leqslant v\leqslant K\,\lambda(r)}\Lambda_{\varPhi}\big(x_{0}, r, \frac{v}{r}\big) \leqslant c_{1}(K)\Lambda_{\lambda}(x_{0}, r),
\end{equation*}

\begin{equation*}
\quad \frac{1}{tp} + \frac{1}{sp}  < \frac{1}{n},\quad t\in\big(\max(1,\frac{1}{p-1}), \infty \big], \quad s\in(1, \infty],
\end{equation*}
here $c_{1}(K)$ is some fixed positive number depending on $K$, $s$ and $t$ and
\end{itemize}
\begin{equation*}
\Lambda_{\varPhi}\big(x_{0}, r, \frac{v}{r}\big):=\Lambda_{-,\varPhi}\big(x_{0}, r, \frac{v}{r}\big)\,\,\Lambda_{+,\varPhi}\big(x_{0}, r, \frac{v}{r}\big),
\end{equation*}
\begin{equation*}
\Lambda_{-,\varPhi}\big(x_{0}, r, \frac{v}{r} \big):=\bigg(r^{-n}\int\limits_{B_{r}(x_{0})}\big[\varPhi(x, \frac{v}{r})\big]^{-t}\,dx\bigg)^{\frac{1}{t}}, \Lambda_{+,\varPhi}\big(x_{0}, r, \frac{v}{r} \big):=\bigg(r^{-n}\int\limits_{B_{r}(x_{0})}\big[\varPhi(x, \frac{v}{r})\big]^{s}\,dx\bigg)^{\frac{1}{s}}.
\end{equation*}

We will also write $(\varPhi^{\lambda}_{\Lambda})$ if condition $(\varPhi^{\lambda}_{\Lambda, x_{0}})$ holds for any $B_{R}(x_{0})\subset B_{8R}(x_{0})\subset \Omega$ and set
\begin{equation*}
\Lambda_{\lambda}(r):=\sup\limits_{x_{0}\in \Omega, B_{8R}(x_{0})\subset \Omega} \Lambda_{\lambda}(x_{0},r).
\end{equation*}
\begin{remark}\label{rem1.1}
Note that in the logarithmic case, i.e. if $\lambda(r)\equiv \Lambda_{\lambda}(r) \equiv 1$ functions from $DG_{\varPhi}(\Omega)$ belong to the standard De Giorgi class $D G_{p\frac{t}{t+1}}(\Omega)$, $p\frac{t}{t+1} >1$ (see Lemma \ref{lem2.2} below), so continuity and Harnack's inequality follow directly from results of \cite{LadUr} and \cite{DiBTr}.
\end{remark}

\begin{remark}\label{rem1.2}
We note that condition $(\varPhi^{\lambda}_{\Lambda})$ generalizes known conditions on the function $\varPhi$, for example, that is condition $(A1-n)$ from \cite{HarHasZAn19, HarHastLee18, HarHastToiv17} in the logarithmic case, i.e. if $\lambda(r)\equiv \Lambda_{1}(r)\equiv\\ \equiv const$. This condition generalizes conditions $\varPhi_{\lambda}$ and $\varPhi_{\mu}$ (see \cite{HadSkrVoi}) in the non-logarithmic case. Moreover, this condition generalizes the non-uniformly elliptic condition(see \cite{HadSavSkrVoi}). And finally, condition $\varPhi^{\lambda}_{\Lambda, x_{0}}$ includes conditions of the type \eqref{eq1.7}.
\end{remark}

Sometimes we will also need the following technical assumption
\begin{itemize}
\item[($\lambda$)]
There exist positive constants $c_{2}$ and $c_{3}$ such that
\begin{equation*}
\lambda(\rho)\leqslant \bigg(\frac{\rho}{r}\bigg)^{c_{2}}\lambda(r),\quad
\Lambda_{\lambda}(x_{0},r)\leqslant \bigg(\frac{\rho}{r}\bigg)^{c_{3}} \Lambda_{\lambda}(x_{0}, \rho),\quad 0 < r \leqslant \rho.
\end{equation*}
\end{itemize}

We refer to the parameters $n$, $p$, $q$, $t$, $s$,  $c$, $c_{0}$, $M(R):=\sup\limits_{B_{R}(x_{0})}|u|$, $c_{1}(M(R))$, $c_{2}$ and $c_{3}$  as our structural  data, and we write $\gamma$ if it can be quantitatively determined a priory in terms of the above
quantities. The generic constant $\gamma$ may change from line to line. In general, we assume that $M:=\sup\limits_{\Omega}|u|$ and $c_{1}(M)$ are also the data. Our first  result is the interior continuity of the functions belonging to the corresponding De Giorgi classes.

\begin{theorem}\label{th1.1}
 Let $u\in DG_{\varPhi}(B_{R}(x_{0}))$ and let  conditions  \,$(\varPhi_{0})$, \,$(\varPhi)$, \,$(\varPhi^{\lambda}_{\Lambda, x_{0}})$   be fulfilled. There exist numbers $C_{1}$, $\beta_{1} >0$ depending only on the data such that if
\begin{equation}\label{eq1.9}
\int\limits_{0} \exp\big(C_{1} \big[\Lambda_{\lambda}(x_{0}, r)\big]^{\beta_{1}}\big) \frac{dr}{\lambda(r)} < + \infty,\quad
\int\limits_{0} \lambda(r)\,\exp\big(-C_{1} \big[\Lambda_{\lambda}(x_{0}, r)\big]^{\beta_{1}}\big) \frac{dr}{r} = + \infty,
\end{equation}
then $u(x)$ is continuous at point $x_{0}$.

If additionally, $u\in DG_{\varPhi}(\Omega)$, condition $(\varPhi^{\lambda}_{\Lambda})$ holds and
\begin{equation}\label{eq1.10}
\int\limits_{0} \exp\big(C_{1} \big[\Lambda_{\lambda}(r)\big]^{\beta_{1}}\big) \frac{dr}{\lambda(r)} < + \infty,\quad
\int\limits_{0} \lambda(r)\,\exp\big(-C_{1} \big[\Lambda_{\lambda}(r)\big]^{\beta_{1}}\big) \frac{dr}{r} = + \infty,
\end{equation}
then $u(x) \in C(\Omega)$.
\end{theorem}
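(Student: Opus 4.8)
The plan is to run De~Giorgi's oscillation method along a sequence of radii whose admissibility is exactly what conditions \eqref{eq1.9} encode. Fix a ball $B_{R_0}(x_0)\subset B_R(x_0)$ and, for $0<\rho\le R_0$, put $\omega(\rho):=\osc_{B_\rho(x_0)}u=\sup_{B_\rho(x_0)}u-\inf_{B_\rho(x_0)}u$. First I would record that finiteness of the first integral in \eqref{eq1.9} already forces $\rho/\lambda(\rho)\to 0$ as $\rho\to 0$ (since $\lambda$ is non-decreasing, $\rho/\lambda(\rho)\le\int_0^{\rho}\frac{ds}{\lambda(s)}$), so that the window $r\le v\le K\lambda(r)$ in $(\varPhi^{\lambda}_{\Lambda,x_0})$ is non-degenerate for small $r$, with $K\asymp M(R)$. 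This lets me work with the modified oscillation $\widetilde\omega(\rho):=\max\{\omega(\rho),\rho\}$: the theorem reduces to showing $\widetilde\omega(\rho)\to 0$ with a rate, because $v:=\widetilde\omega(\rho)$ is always an admissible scale to feed into $\varPhi(x,v/\rho)$, and $\omega(\rho)\le\widetilde\omega(\rho)$.

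The core is a one-step contraction $\widetilde\omega(\tau\rho)\le\big(1-\eta(x_0,\rho)\big)\,\widetilde\omega(\rho)$ for a fixed $\tau\in(0,1)$, where
\[
\eta(x_0,\rho)\ \ge\ c\,\lambda(\rho)\,\exp\!\big(-C\,[\Lambda_\lambda(x_0,\rho)]^{\beta_1}\big),\qquad c,C,\beta_1\ \text{depending only on the data.}
\]
I would obtain this from the two classical De~Giorgi alternatives applied, say, to $(u-k)_+$ with levels $k_j:=\sup_{B_\rho(x_0)}u-2^{-j}v$, $v=\widetilde\omega(\rho)$, and shrinking radii $\rho_j\downarrow\tau\rho$: (i) a measure--shrinking lemma, using \eqref{eq1.8} and the De~Giorgi isoperimetric inequality, showing that $|\{u>k_j\}\cap B_{\tau\rho}|\le\nu\,|B_{\tau\rho}|$ once $j\ge j_0$, with $j_0$ controlled by $\nu$, $\lambda(\rho)$ and $\Lambda_\lambda(x_0,\rho)$; and (ii) a first De~Giorgi lemma, using \eqref{eq1.8}, a Sobolev-type inequality (the one underlying Lemma~\ref{lem2.2}, now carrying the $\lambda$- and $\Lambda_\lambda$-weights), and --- this is the crux --- the Hölder ``coefficient freezing''
\[
\int_{A}\varPhi\big(x,\tfrac{v}{\rho}\big)\,dx\ \le\ \Lambda_{+,\varPhi}\big(x_0,\rho,\tfrac{v}{\rho}\big)\,\rho^{\,n/s}\,|A|^{\,1-1/s},\qquad \Big(\rho^{-n}\!\int_{B_\rho(x_0)}\big[\varPhi\big(x,\tfrac{v}{\rho}\big)\big]^{-t}\,dx\Big)^{1/t}=\Lambda_{-,\varPhi}\big(x_0,\rho,\tfrac{v}{\rho}\big),
\]
which through $(\varPhi^{\lambda}_{\Lambda,x_0})$ replaces the $x$-dependent integrand by the factor $\Lambda_\lambda(x_0,\rho)$ at the price of a smallness threshold $\nu$ that degrades as $\Lambda_\lambda(x_0,\rho)$ grows. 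Chaining (ii) after (i) gives $u\le\sup_{B_\rho(x_0)}u-2^{-j_0-1}v$ on $B_{\tau\rho}$ in the ``$+$''-alternative (symmetrically with $(u-k)_-$), i.e.\ a reduction of $\widetilde\omega$ with $j_0$ of order $[\Lambda_\lambda(x_0,\rho)]^{\beta_1}$. Here one must check that every argument $v/\rho$, $2^{-j}v/\rho$ fed into $\varPhi(x,\cdot)$ stays in $[1,K\lambda(\rho)/\rho]$: the three sub-cases $\omega(\rho)<\rho$, $\rho\le\omega(\rho)\le K\lambda(\rho)$, $\omega(\rho)>K\lambda(\rho)$ are handled by cutting, respectively, at the artificial level scale $\rho$, at $\omega(\rho)$, and at the ceiling $K\lambda(\rho)$; in each the gain is at least the displayed $\eta(x_0,\rho)$, the factor $\lambda(\rho)$ accounting for the ceiling on the admissible cut when $\omega(\rho)$ is close to its maximal value $2M(R)$, and the exponential for the De~Giorgi loss.

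Then I would iterate. With $\rho_j:=\tau^{\,j}R_0$, the one-step estimate gives $\widetilde\omega(\rho_{j+1})\le\widetilde\omega(R_0)\exp\!\big(-\sum_{i=0}^{j}\eta(x_0,\rho_i)\big)$, up to finitely many borderline steps absorbed into a summable error $\sum_i\delta(x_0,\rho_i)$ with $\delta(x_0,\rho_i)\asymp\rho_i\,\lambda(\rho_i)^{-1}\exp(C[\Lambda_\lambda(x_0,\rho_i)]^{\beta_1})$. Since $\lambda$ is non-decreasing and $\Lambda_\lambda(x_0,\cdot)$ non-increasing, a Riemann-sum comparison yields $\sum_i\eta(x_0,\rho_i)\asymp\int_0^{R_0}\lambda(r)\exp(-C[\Lambda_\lambda(x_0,r)]^{\beta_1})\,\tfrac{dr}{r}$ and $\sum_i\delta(x_0,\rho_i)\asymp\int_0^{R_0}\exp(C[\Lambda_\lambda(x_0,r)]^{\beta_1})\,\tfrac{dr}{\lambda(r)}$; by \eqref{eq1.9} (with $C\le C_1$) the first diverges and the second converges, the latter also allowing $R_0$ to be fixed small enough that the scheme runs from $j=0$. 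Hence $\widetilde\omega(\rho_j)\to 0$, which gives a modulus of continuity for $u$ at $x_0$ and proves the first assertion. For the second, I would repeat the argument with $\Lambda_\lambda(r)=\sup_{x_0}\Lambda_\lambda(x_0,r)$ in place of $\Lambda_\lambda(x_0,r)$: under $(\varPhi^{\lambda}_{\Lambda})$ and \eqref{eq1.10} the one-step estimate and the iteration become uniform over $x_0$ in compact subsets of $\Omega$, so $u$ admits there a modulus of continuity independent of $x_0$, whence $u\in C(\Omega)$.

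The main obstacle is the bookkeeping in steps (i)--(ii): carrying the weights $\lambda(\rho)$ and $\Lambda_\lambda(x_0,\rho)$ through the Sobolev inequality and the Young/Hölder splittings so that \emph{(a)} every argument of $\varPhi(x,\cdot)$ remains inside the window $[r,K\lambda(r)]$ where $(\varPhi^{\lambda}_{\Lambda,x_0})$ is available, and \emph{(b)} the one-step gain $\eta(x_0,\rho)$ and error $\delta(x_0,\rho)$ have precisely the exponential-in-$[\Lambda_\lambda]^{\beta_1}$ shape that makes the two integrals in \eqref{eq1.9} the sharp condition. Once the admissible exponent $\beta_1$ and the constant $C_1$ are identified, the iteration itself is routine.
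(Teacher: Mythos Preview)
Your proposal is correct and follows essentially the same route as the paper: you run the De~Giorgi alternative, combine a measure-shrinking step (the paper's Lemma~\ref{lem2.3}) with a first De~Giorgi lemma (the paper's Lemma~\ref{lem2.4}) to obtain the one-step oscillation decay
\[
\omega\!\left(\tfrac{r}{2}\right)\le\Big(1-\tfrac{\lambda(r)}{4}\exp\!\big(-\gamma[\Lambda_\lambda(x_0,r)]^{\bar\beta_3}\big)\Big)\omega(r)+\gamma\,\tfrac{r}{\lambda(r)}\exp\!\big(\gamma[\Lambda_\lambda(x_0,r)]^{\bar\beta_3}\big),
\]
and then iterate and pass to the two integrals in \eqref{eq1.9} exactly as you describe. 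The only cosmetic difference is packaging: the paper isolates the two De~Giorgi steps as separate auxiliary lemmas and keeps the additive error term explicit in the recursion (rather than introducing $\widetilde\omega=\max\{\omega,\rho\}$), but the substance and the resulting iterated estimate are the same.
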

Here some typical examples of the function $\varPhi$ which satisfies the conditions of the above theorem .

$\bullet$ The function $\varPhi_{1}(x, v)= v^{p}+a(x) v^{q}$ satisfies condition $(\varPhi^{\lambda}_{1,x_{0}})$ with $\lambda(r)=[\log\frac{1}{r}]^{-L}$ and \\ $\Lambda_{\lambda}(x_{0},r)\equiv 1$ if $\osc\limits_{B_{r}(x_{0})} a(x) \leqslant A r^{q-p}\,[\log\frac{1}{r}]^{L(q-p)}$  and $a(x_{0})=0$.     Condition \eqref{eq1.9} holds if $L\leqslant 1$. If $a(x_{0})>0$, then $a(x) \asymp a(x_{0})$, provided that $R$ is small enough,  condition $(\varPhi^{1}_{1, x_{0}})$ holds with $\lambda(r)\equiv \Lambda_{1}(x_{0},r)\equiv 1$. Condition \eqref{eq1.9} is always satisfied. \\ The function $\varPhi_{1}(x, v)$ satisfies condition $(\varPhi^{1}_{\Lambda, x_{0}})$ with $\lambda(r)\equiv 1$ and $\Lambda_{1}(x_{0}, r)=[\log\log\frac{1}{r}]^{L}$, $L>0$ provided that $[\log\log\frac{1}{|x-x_{0}|}]^{-L}\leqslant a(x)\leqslant 1$. Condition \eqref{eq1.9} holds if $L\,\beta_{1} <1$.

$\bullet$ The function $\varPhi_{2}(x,v)= v^{p}\big( 1+a(x) \log(1 +v)\big)$ satisfies condition $(\varPhi^{1}_{\Lambda,x_{0}})$ with $\lambda(r)\equiv 1$ and $\Lambda_{1}(x_{0}, r)=[\log\log\frac{1}{r}]^{L}$ if $\osc\limits_{B_{r}(x_{0})} a(x) \leqslant A\dfrac{[\log\log\frac{1}{r}]^{L}}{\log\frac{1}{r}}$ and $a(x_{0})=0$, provided that $R$ is sufficiently small. Condition \eqref{eq1.9} holds if $L\beta_{1} <1$.\quad If $a(x_{0})>0$, then $a(x) \asymp a(x_{0})$, provided that $R$ is small enough,  condition $(\varPhi^{1}_{1,x_{0}})$ holds with $\lambda(r)\equiv \Lambda_{1}(x_{0},r)\equiv 1$. Condition \eqref{eq1.9} is always satisfied.\\ The function $\varPhi_{2}(x, v)$ satisfies condition $(\varPhi^{1}_{\Lambda,x_{0}})$ with $\lambda(r)\equiv 1$ and $\Lambda_{1}(x_{0}, r)=[\log\log\frac{1}{r}]^{L}$, $L>0$ provided that $[\log\log\frac{1}{|x-x_{0}|}]^{-L}\leqslant a(x)\leqslant 1$. Condition \eqref{eq1.9} holds if $L\,\beta_{1} <1$.

$\bullet$ The function $\varPhi_{3}(x, v)=v^{p(x)}$ satisfies condition $(\varPhi^{1}_{1,x_{0}})$ with $\lambda(r)\equiv 1$ and \\$\Lambda_{1}(x_{0}, r)=[\log\log\frac{1}{r}]^{L}$ if $\osc\limits_{B_{r}(x_{0})} p(x)\leqslant L\frac{\log\log\log\frac{1}{r}}{\log\frac{1}{r}}$. Condition \eqref{eq1.9} holds if $L\beta_{1} <1$.\\
The function $\varPhi_{3}(x, v)$ satisfies condition $(\varPhi^{1}_{1,x_{0}})$ with $\lambda(r)\equiv \Lambda_{1}(x_{0}, r)\equiv 1$, if \\$p(x)= p \pm L \dfrac{\log\log\frac{1}{|x-x_{0}|}}{\log\frac{1}{|x-x_{0}|}},\quad L>0$, provided that $R$ is small enough. Condition \eqref{eq1.9} is always holds.

$\bullet$ The function $\varPhi_{4}(x, v)= v^{p} \big(1+ \log(1+a(x) v)\big)$ satisfies condition $(\varPhi^{\lambda}_{1,x_{0}})$ with \\$\lambda(r)=[\log\frac{1}{r}]^{-L}$ and $\Lambda_{1}(x_{0},r)\equiv 1$ if $\osc\limits_{B_{r}(x_{0})} a(x) \leqslant A r [\log\frac{1}{r}]^{L}$ and $a(x_{0})=0$.
Condition \eqref{eq1.9} holds if $L \leqslant 1$. \quad If $a(x_{0}) >0$, then $a(x) \asymp a(x_{0})$, provided that $R$ is small enough, condition $(\varPhi^{1}_{1,x_{0}})$ holds with $\lambda(r)\equiv \Lambda_{1}(x_{0}, r)\equiv 1$. Condition \eqref{eq1.9} is always satisfied.

Next result is the Harnack inequality. We will distinguish several cases, first we will assume that $\Lambda_{\lambda}(r)\leqslant const$, $0< r
\leqslant R$. Note that the case $\lim\limits_{r\rightarrow 0} \Lambda_{1}(r) =\infty$ is possible.

\begin{theorem}\label{th1.2}
Let $u \in DG^{-}_{\varPhi}(\Omega)$, $u\geqslant 0$, let conditions $(\varPhi_{0})$, $(\varPhi)$,\,
$(\varPhi^{\lambda}_{1})$, $(\lambda)$ be fulfilled. Then there exist numbers $C_{2} > 0$, $\theta\in (0,1)$ depending only on the data  such that
\begin{equation}\label{eq1.11}
\bigg(\rho^{-n}\int\limits_{B_{\rho}(x_{0})} u^{\theta}\,dx\bigg)^{\frac{1}{\theta}} \leqslant \frac{C_{2}}{\lambda(\rho)} \big\{\,\min\limits_{B_{\frac{\rho}{2}}(x_{0})}\,u + \rho\,\big\},\quad 0< \rho \leqslant R,
\end{equation}
provided that $ B_{8R}(x_{0}) \subset \Omega$.

In addition, if  $u \in DG_{\varPhi}(\Omega)$ and condition $(\varPhi^{1}_{\Lambda})$ holds , then there exist numbers $C_{3}$, $\beta_{2} >0$
depending only on the data such that
\begin{equation}\label{eq1.12}
\max\limits_{B_{\frac{\rho}{2}}(x_{0})} u \leqslant C_{3}\,\frac{[\Lambda_{1}(\rho)]^{\beta_{2}}}{\lambda(\rho)} \big\{\,\min\limits_{B_{\frac{\rho}{2}}(x_{0})}\,u + \rho\,\big\}, \quad 0< \rho \leqslant R,
\end{equation}
provided that $B_{8R}(x_{0}) \subset \Omega$.
\end{theorem}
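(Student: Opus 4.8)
\textbf{Proof plan for Theorem \ref{th1.2}.}

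The plan is to follow the classical De Giorgi--Ladyzhenskaya--Uraltseva scheme for the Harnack inequality, as adapted to generalized Orlicz growth in \cite{LadUr}, \cite{DiBTr}, \cite{HadSkrVoi}, but carrying the non-logarithmic factor $1/\lambda(\rho)$ and the non-uniformly elliptic factor $\Lambda_\lambda$ through every energy estimate. The first reduction is quantitative: by Remark \ref{rem1.1} and Lemma \ref{lem2.2} (referenced in the excerpt), under condition $(\varPhi^\lambda_\Lambda)$ a function in $DG_\varPhi$ lies, on the relevant range of levels $k$ with $r\leqslant M_\pm(k,r)\leqslant K\lambda(r)$, in a weighted De Giorgi class; the weight has controlled $L^s$ and $L^{-t}$ norms. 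So the first step is to fix $\rho$, set $M_\rho=\sup_{B_\rho}u$ (we may assume $M_\rho\geqslant\rho$, otherwise \eqref{eq1.11} is trivial after adjusting $C_2$), and rescale so that levels live between $\rho$ and $K\lambda(\rho)$; from here on all De Giorgi inequalities \eqref{eq1.8} are read with $\varPhi(x,\cdot)$ replaced by its truncated power-type comparison from $(\varPhi)$, and the crucial quantity $\Lambda_{\varPhi}(x_0,r,v/r)$ is bounded by $c_1\Lambda_\lambda(x_0,r)\leqslant c_1\,\mathrm{const}$ on this range.

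The core consists of the two halves of the Harnack inequality. For the weak Harnack part \eqref{eq1.11} (lower bound), I would run the standard measure-to-pointwise argument: (i) an expansion-of-positivity lemma, obtained by iterating the energy inequality on $(u-k)_-$ over shrinking balls using the De Giorgi iteration lemma together with a Sobolev--Poincaré inequality valid for the weighted class (this is where $1/tp+1/sp<1/n$ enters, guaranteeing a genuine Sobolev exponent gain $p\frac{t}{t+1}$ or better after Hölder); (ii) a measure-theoretic lemma of De Giorgi--type showing that if $u$ is bounded below by a fixed fraction of $M_\rho/\lambda(\rho)$ on a set of positive measure in $B_\rho$, then it is bounded below by a smaller fraction on $B_{\rho/2}$; (iii) a Krylov--Safonov-style crawling-ink-spots / covering argument converting the measure statement into the $L^\theta$ bound, with $\theta$ fixed small depending on the data. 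The factor $1/\lambda(\rho)$ appears because the natural level in all these steps is $M_\rho/\lambda(\rho)$ rather than $M_\rho$: condition $(\lambda)$ is what lets one pass the factor $\lambda(\rho)/\lambda(\rho/2)$ (bounded below by a power of $2$) through the finitely many iteration steps without loss. For the local boundedness half \eqref{eq1.12} (upper bound), I would first prove the local sup-bound $\max_{B_{\rho/2}}u\leqslant C[\Lambda_1(\rho)]^{\beta_2}\lambda(\rho)^{-1}(\text{average of }u+\rho)$ by a De Giorgi iteration on $(u-k)_+$ using the $L^s$ integrability of $\varPhi(x,\cdot)$ (the power $\beta_2$ of $\Lambda_1$ comes from tracking the constant $c_1\Lambda_1(\rho)$ through the geometric iteration and the exponent $s/(s-1)$ lost in Hölder), then combine it with \eqref{eq1.11}; equivalently, one runs the full oscillation-decay / Harnack chain à la \cite{DiBTr}. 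The passage from the local statement at $x_0$ to $u\in C(\Omega)$ or to the global Harnack on $\Omega$ under condition $(\varPhi^\lambda_\Lambda)$ is routine once $\Lambda_\lambda(x_0,r)$ is replaced by its supremum $\Lambda_\lambda(r)$.

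The main obstacle, as usual in this family of results, is the Sobolev--Poincaré / embedding step under the \emph{non-uniformly elliptic} condition $(\varPhi^\lambda_{\Lambda,x_0})$ in the regime where $\Lambda_\lambda$ is merely bounded but possibly with $\Lambda_1(r)\to\infty$ slowly: one cannot use $\varPhi(x,\cdot)$ directly as a Muckenhoupt-type weight, so the estimate $\int\varPhi(x,|\nabla v|)\geqslant (\text{const})\big(\int|\nabla v|^{p t/(t+1)}\big)^{(t+1)/t}$ must be obtained via Hölder against $\Lambda_{-,\varPhi}$, and then the resulting exponent $p t/(t+1)$ has to be large enough — exactly $>1$ by the constraint $t>\max(1,1/(p-1))$ — and still Sobolev-subcritical, forcing the two-sided balance $1/tp+1/sp<1/n$. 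Keeping the dependence of all constants on $\Lambda_1$ \emph{polynomial} (hence the clean exponent $\beta_2$ in \eqref{eq1.12}) through the infinitely-iterated De Giorgi scheme, rather than merely finite, is the delicate bookkeeping; this is handled by choosing the iteration parameters (ball radii $\rho_j=\rho(1+2^{-j})/2$, level increments) so that the recursive inequality for the energy has the self-improving form $Y_{j+1}\leqslant C\,b^{j}\,\Lambda_1^{a}\,Y_j^{1+\delta}$ with $\delta>0$ depending only on the Sobolev gain, whose smallness threshold is then a fixed power of $\Lambda_1(\rho)$.
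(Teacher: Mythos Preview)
Your plan for the local boundedness half \eqref{eq1.12} matches the paper closely: a De\,Giorgi iteration on $(u-k)_+$ (using inequality \eqref{eq2.2} from Lemma~\ref{lem2.2}) gives the $\sup$--$L^\theta$ bound \eqref{eq3.4} with a polynomial factor $[\Lambda_1(\rho)]^{1/(p\kappa\theta)}$, and one then combines with \eqref{eq1.11}.

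For the weak Harnack part \eqref{eq1.11}, however, the paper takes a different route from your Krylov--Safonov / crawling-ink-spots scheme. The paper first proves a single expansion-of-positivity statement, Theorem~\ref{th1.5}, whose key new ingredient is the \emph{local clustering lemma} of DiBenedetto--Gianazza--Vespri (Lemma~\ref{lem1.1}): from $|\{B_r(y):u>N\}|\geqslant\alpha|B_r(y)|$ one passes to a smaller ball $B_{\bar r}(\bar x)$, $\bar r\asymp\varepsilon_0\alpha^2 r/\Lambda_\lambda(r)$, on which the density is at least $1/4$, and then iterates Lemmas~\ref{lem2.3}--\ref{lem2.4} outward to $B_{\rho/2}(x_0)$. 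Under $(\varPhi^\lambda_1)$ (i.e.\ $\Lambda_\lambda\leqslant\gamma$) this yields a clean power bound $\alpha^{\tau_1}\lesssim \bar m(\rho)/(N\lambda(\rho))$ for the super-level measure, and the paper then reads off \eqref{eq1.11} by the layer-cake formula $\rho^{-n}\int u^\theta=\theta\int_0^\infty N^{\theta-1}\rho^{-n}|\{u>N\}|\,dN$ with $\theta<1/\tau_1$; no covering argument is needed. Your covering approach should also go through here because $\Lambda_\lambda$ is bounded and condition $(\lambda)$ controls the scale-change factors $\lambda(\rho)/\lambda(r)\leqslant(\rho/r)^{c_2}$ polynomially, but the paper explicitly avoids the Krylov--Safonov mechanism because in the more general setting of Theorem~\ref{th1.3} (where $\Lambda_\lambda(r)\to\infty$) the $\alpha$ produced at each stage depends on $r$ and the accumulated factors cannot be controlled by a covering; the clustering-lemma route is what makes the argument uniform across Theorems~\ref{th1.2}--\ref{th1.3}. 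If you intend only Theorem~\ref{th1.2}, your plan is viable, but you should be aware that Lemma~\ref{lem1.1}, not crawling ink spots, is the tool actually used.
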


 We formulate our next theorem under the assumption $\lambda(r)\equiv 1$, moreover, its formulation  requires more complicated conditions on the function $\Lambda_{1}(r)$, so we will prove it only in the model case, namely, we will assume that $\Lambda_{1}(r) =[\log\log\frac{1}{r}]^{L}, L>0$.

\begin{theorem}\label{th1.3}
Let $u \in DG_{\varPhi}(\Omega)\cap C(\Omega)$, $u\geqslant 0$ and let conditions  $(\varPhi_{0})$, $(\varPhi)$,\, $(\varPhi^{1}_{\Lambda})$   be fulfilled. Let $\Lambda_{1}(\rho)=[\log\log\frac{1}{\rho}\big]^{L}$,\quad $\rho\in(0, 1)$, $L > 0$. Then there exists number $C_{4} >0$, depending only on the data and $L$ such that
\begin{equation}\label{eq1.13}
u(x_{0}) \leqslant C_{4}\,\log\frac{1}{\rho}\,\big\{\min\limits_{B_{\frac{\rho}{2}}(x_{0})} u + \,\rho\,\big\},
\quad 0< \rho \leqslant R,
\end{equation}
provided that $B_{8R}(x_{0}) \subset \Omega$ and $L$ is small enough.
\end{theorem}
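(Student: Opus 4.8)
The estimate \eqref{eq1.13} is a weak form of Harnack's inequality for the regime in which the factor $\Lambda_{1}$ of condition $(\varPhi^{1}_{\Lambda})$ is unbounded, so that Theorem~\ref{th1.2} no longer applies directly; the plan is to follow the De Giorgi--Trudinger scheme, deriving separately a lower and an upper estimate for $u$ near $x_{0}$ and then combining them. Fix $\rho\leqslant R$ with $B_{8R}(x_{0})\subset\Omega$ and put $N:=\min\limits_{B_{\rho/2}(x_{0})}u+\rho$. The lower half of the argument is a weak Harnack inequality
\[
\Big(\rho^{-n}\int\limits_{B_{\rho}(x_{0})}u^{\theta}\,dx\Big)^{\frac1\theta}\leqslant\gamma\,\big[\log\tfrac1\rho\big]^{C_{5}L}\,N ,\qquad \theta\in(0,1),
\]
with $C_{5}>0$ depending on the data: it is produced, as in Theorem~\ref{th1.2}, from inequality \eqref{eq1.8} with sign $-$ (for the $DG^{-}_{\varPhi}$-class) through the expansion of positivity and a chain of balls joining $B_{\rho}(x_{0})$ to the scale where $\min_{B_{\rho/2}}u$ is realised; since $\lambda\equiv1$ this chain has $O(\log\tfrac1\rho)$ links, each costing a factor controlled by $\Lambda_{1}$. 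Since we want only the value $u(x_{0})$ and not $\max_{B_{\rho/2}}u$, it then remains to prove the pointwise upper bound $u(x_{0})\leqslant\gamma\big[\log\tfrac1\rho\big]^{C_{6}L}\big[(\rho^{-n}\int_{B_{\rho}(x_{0})}u^{\theta}\,dx)^{1/\theta}+\rho\big]$, and combining the two while using that $L$ is small (so that $(C_{5}+C_{6})L\leqslant1$) gives \eqref{eq1.13}.

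For the upper bound I would run a De Giorgi iteration truncating $u$ simultaneously at levels $k_{j}\uparrow\bar k$ and on balls $B_{r_{j}}(x_{0})$ with $r_{j}\downarrow\rho/4$, controlling $Y_{j}:=r_{j}^{-n}\int_{A^{+}_{k_{j},r_{j}}}\big(\tfrac{(u-k_{j})_{+}}{r_{j}}\big)^{\theta}dx$ together with the companion energy term. From \eqref{eq1.8} on $B_{r_{j}}(x_{0})$ with the cut-off $\zeta_{j}$ and the Sobolev inequality applied to $\zeta_{j}(u-k_{j+1})_{+}$, I would estimate $\int\varPhi\big(x,\tfrac{(u-k_{j+1})_{+}}{r_{j}-r_{j+1}}\big)dx$ by splitting according to the rescaled increment $w:=\tfrac{(u-k_{j+1})_{+}}{r_{j}-r_{j+1}}$: where $w\geqslant K/r_{j}$ the conditions $(\varPhi_{0})$, $(\varPhi)$ give $\varPhi(x,w)\leqslant\gamma\,w^{q}$ with a constant independent of $x$, so this part behaves like the classical $DG_{q}$-estimate and carries no loss, while where $w$ lies in the admissible window $[1,K/r_{j}]$ condition $(\varPhi^{1}_{\Lambda})$ applies---after a H\"older splitting with exponents $s$ and $t$ to decouple $\varPhi(x,\cdot)$ from the characteristic function of the level set---and yields a factor $\Lambda_{1}(\rho)=[\log\log\tfrac1\rho]^{L}$. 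This leads to an iterative inequality $Y_{j+1}\leqslant\gamma\,b^{\,j}\,[\Lambda_{1}(\rho)]^{a_{j}}\,Y_{j}^{1+\kappa}$ with $\kappa,b>0$ from the data, in which $a_{j}\neq0$ only for the $O(\log\tfrac1\rho)$ generations whose increment scale $\bar k\,2^{-j}$ lies between a fixed multiple of $\rho$ and a fixed multiple of $K$. The fast-convergence lemma then gives $\bar k\leqslant\gamma\big[\log\tfrac1\rho\big]^{C_{6}L}\big[(\rho^{-n}\int_{B_{\rho}(x_{0})}u^{\theta}\,dx)^{1/\theta}+\rho\big]$, and the continuity of $u$ gives $u(x_{0})\leqslant\sup_{B_{\rho/4}(x_{0})}u\leqslant\bar k$.

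The step I expect to be the real obstacle is the quantitative bookkeeping of the accumulated $\Lambda_{1}$-losses in both halves of the argument. One has to set up the expansion-of-positivity chain and the De Giorgi iteration (the choice of the levels $k_{j}$ and the radii $r_{j}$, and the moments of entering and leaving the window $[1,K/r]$) so that the degenerating factor $[\log\log\tfrac1\rho]^{L}$ is felt only across the $O(\log\tfrac1\rho)$ intermediate scales and does not compound across the remaining scales, and so that the crude $L^{q}$-parts stay subordinate to $Y_{j}$ throughout; one then needs the resulting exponents $C_{5}L$, $C_{6}L$ of $\log\tfrac1\rho$ to satisfy $(C_{5}+C_{6})L\leqslant1$, which is exactly where the hypothesis that $L$ be small is used. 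The model form $\Lambda_{1}(\rho)=[\log\log\tfrac1\rho]^{L}$ is taken precisely so that this balance can be made explicit; the trivial case $u(x_{0})\leqslant C_{4}\big(\min_{B_{\rho/2}(x_{0})}u+\rho\big)$ should be isolated at the very beginning.
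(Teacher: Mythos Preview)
Your plan of proving a weak Harnack inequality and a sup bound separately and then multiplying the two constants is not how the paper proceeds, and there is a genuine gap in the weak--Harnack half of your argument. In the proof of \eqref{eq1.11} one applies Theorem~\ref{th1.5} with $r=\rho$, getting
\[
\alpha(N)^{\tau_{1}}\leqslant \gamma\,\exp\Big(c\int_{\bar r}^{\rho}[\Lambda_{1}(s)]^{\beta}\,\tfrac{ds}{s}\Big)\,\bar m(\rho)/N,
\qquad \bar r=\varepsilon_{0}\,\alpha(N)^{2}\rho/\Lambda_{1}(\rho),
\]
where $\alpha(N)=|\{B_{\rho}(x_{0}):u>N\}|/|B_{\rho}(x_{0})|$. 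The point is that $\bar r$ depends on $\alpha(N)$, so $\log(\rho/\bar r)\sim 2\log\frac{1}{\alpha(N)}$ and the exponential factor contributes an additional $\alpha(N)^{-2c[\log\log(1/\rho)]^{L\beta}}$ on the right. The effective exponent on $\alpha$ thus becomes $\tau_{1}+2c[\log\log\tfrac1\rho]^{L\beta}$, which is $\rho$--dependent and forces the admissible $\theta$ in the layer--cake integral to shrink to $0$ as $\rho\to 0$; a fixed $\theta\in(0,1)$ and a factor of size $[\log\tfrac1\rho]^{C_{5}L}$ are not obtainable this way. Your description of the chain as having ``$O(\log\tfrac1\rho)$ links, each costing a factor controlled by $\Lambda_{1}$'' is not what actually happens: the number of doublings is $\log(\rho/\bar r)$, not $\log(1/\rho)$, and each doubling costs $\exp(\gamma[\Lambda_{1}]^{\bar\beta_{3}})$ (Lemmas~\ref{lem2.3}--\ref{lem2.4}), not a power of $\Lambda_{1}$.

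The paper avoids this by a Krylov--Safonov type \emph{touching} argument rather than a two--sided combination. One introduces the barrier
\[
N_{\tau}=\tfrac{u(x_{0})}{2}(1-\tau)^{-l_{1}}\exp\Big(l_{2}\big([\log\log\tfrac{1}{(1-\tau)\rho}]^{Ll_{3}}-[\log\log\tfrac{1}{\rho}]^{Ll_{3}}\big)\Big)\exp\Big(c\!\int_{\psi((1-\tau)\rho)}^{\psi(\rho)}[\log\log\tfrac{1}{s}]^{L\beta}\,\tfrac{ds}{s}\Big),
\]
takes the maximal root $\tau_{0}$ of $M_{\tau}=N_{\tau}$, and works at the single scale $r=\tfrac{\rho}{2}(1-\tau_{0})$ around a point $y$ where $u(y)=\max_{B_{\rho\tau_{0}}(x_{0})}u$. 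A measure lower bound $|\{B_{r}(y):u\geqslant u(y)/4\}|\geqslant \alpha(r)|B_{r}(y)|$ with the explicit $\alpha(r)=\nu_{1}[\log\log\tfrac1r]^{-\bar\beta_{2}}\exp(-\tfrac{q}{p\kappa}8^{L\beta}c[\log\log\tfrac1r]^{L\beta})$ is obtained by contradiction from Lemma~\ref{lem2.4}, and then Theorem~\ref{th1.5} is applied \emph{once} with this $\alpha(r)$. The integral term in $N_{\tau}$ is tailored so that it telescopes against the factor $\exp\big(-c\int_{\psi(r)}^{\rho}[\log\log\tfrac1s]^{L\beta}\tfrac{ds}{s}\big)$ coming from Theorem~\ref{th1.5}, and the remaining exponents are matched by choosing $l_{1}=1+\tau_{2}+\bar\beta_{2}\tau_{1}$, $l_{2}=c\tfrac{q}{p\kappa}8^{L\beta}$, $l_{3}=\beta$. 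What survives is a single factor $\exp(-\gamma[\log\log\tfrac1\rho]^{L\beta})$, which for $L\beta<1$ is bounded below by $(\log\tfrac1\rho)^{-1}$. This cancellation is the core of the proof and is absent from your scheme.
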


We prove our most general result only for solutions of the corresponding equations. More precisely, we are concerned with elliptic equations
\begin{equation}\label{eq1.14}
div\bigg(\varPhi(x, |\nabla u|) \frac{\nabla u}{| \nabla u|^{2}}\bigg) =0, \quad x\in \Omega.
\end{equation}
We say that a function $u$ is a weak sub(super)-solution to Eq. \eqref{eq1.14} if $u \in W^{1,\varPhi}(\Omega)$ and the integral identity
\begin{equation}\label{eq1.15}
\int\limits_{\Omega}\,\varPhi(x, |\nabla u|) \frac{\nabla u}{| \nabla u|^{2}}\nabla \eta\,dx \leqslant(\geqslant)= 0,
\end{equation}
holds for all non-negative test functions $\eta \in W^{1,\varPhi}_{0}(\Omega)$.

The next result is Harnack's inequality under the point condition $(\varPhi^{\lambda}_{\Lambda, x_{0}})$.
\begin{theorem}\label{th1.4}
Let $u$ be a non-negative bounded weak super-solution to Eq. \eqref{eq1.14} and let conditions $(\varPhi_{0})$, $(\varPhi)$, $(\varPhi^{\lambda}_{\Lambda, x_{0}})$ and $(\lambda)$ be fulfilled. Assume also that
\begin{equation}\label{eq1.16}
\big( \varPhi(x, |\xi|)\frac{\xi}{|\xi|^{2}} -\varPhi(x, |\zeta|)\frac{\zeta}{|\zeta|^{2}} \big)(\xi -\zeta)  >0,\quad \xi, \zeta \in \mathbb{R}^{n},\quad \xi \ne \zeta,\quad x\in \Omega.
\end{equation}
Then there exist numbers $C_{5}$, $C_{6}> 0$, $\theta \in(0,1)$ depending only on the data such that
\begin{equation}\label{eq1.17}
\bigg(\rho^{-n}\,\int\limits_{B_{\rho}(x_{0})} u^{\theta}\,dx \bigg)^{\frac{1}{\theta}} \leqslant \frac{1}{\lambda(\rho)}\exp\big(C_{5}\big[\Lambda_{\lambda}(x_{0}, \rho)\big]^{C_{6}}\big) \big\{\,\min\limits_{B_{\frac{\rho}{2}}(x_{0})} u  + \rho \big\},
\end{equation}
provided that $B_{8\rho}(x_{0}) \subset \Omega$.

In addition, if $u$ is a  non-negative bounded  weak solution to Eq. \eqref{eq1.14}, then
\begin{equation}\label{eq1.18}
\max\limits_{B_{\frac{\rho}{2}}(x_{0})} u \leqslant \frac{1}{\lambda(\rho)}\exp\big(C_{7}\big[\Lambda_{\lambda}(x_{0}, \rho)\big]^{C_{8}}\big) \big\{\,\min\limits_{B_{\frac{\rho}{2}}(x_{0})} u  + \rho \big\},
\end{equation}
provided that $B_{8\rho}(x_{0}) \subset \Omega$. Here $C_{7}$, $C_{8} >0$ depend only on the data.
\end{theorem}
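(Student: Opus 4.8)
The plan is to follow the classical De Giorgi–Ladyzhenskaya–Uraltseva route for Harnack's inequality, but with all energy estimates, Sobolev embeddings and measure-theoretic lemmas rewritten in terms of the weighted integrals $\Lambda_{\pm,\varPhi}$ and tracked so that the loss in each iteration is controlled by a fixed power of $\Lambda_{\lambda}(x_0,\rho)$. The first step is to verify that a non-negative bounded weak super-solution to \eqref{eq1.14} satisfies the Caccioppoli inequality \eqref{eq1.8}, i.e.\ belongs to $DG^{-}_{\varPhi}(\Omega)$: this is the standard test-function argument, inserting $\eta=\zeta^{q}(u-k)_{-}$ into \eqref{eq1.15} and using $(\varPhi)$ together with Young's inequality in the generalized Orlicz setting. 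Similarly, a weak solution lies in $DG_{\varPhi}(\Omega)$. By Remark \ref{rem1.1} and Lemma \ref{lem2.2}, membership in $DG_{\varPhi}$ under $(\varPhi^{\lambda}_{\Lambda,x_0})$ already places $u$ in a perturbed De Giorgi class; the novelty here is that the perturbation is quantified by $\Lambda_{\lambda}(x_0,\rho)$ rather than being a harmless constant, and one must show the Harnack constant degrades only like $\exp\big(C[\Lambda_{\lambda}(x_0,\rho)]^{C'}\big)$.

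The core of the argument splits into the two standard pieces. \emph{(i) The weak Harnack / measure-to-pointwise estimate for super-solutions.} Starting from the Caccioppoli inequality for $(u-k)_{-}$ with $k$ in the admissible range $\rho\le k\le K\lambda(\rho)$, one applies the condition $(\varPhi^{\lambda}_{\Lambda,x_0})$ via Hölder's inequality with exponents governed by $s$ and $t$ to pass from $\int \varPhi(x,|\nabla(u-k)_{-}|)\zeta^{q}$ to an unweighted $\int |\nabla(u-k)_{-}|^{p\frac{t}{t+1}}$ up to a factor $\Lambda_{\lambda}(x_0,\rho)$; the restriction $\frac{1}{tp}+\frac{1}{sp}<\frac1n$ and $t>\max(1,\frac{1}{p-1})$ is exactly what makes $p\frac{t}{t+1}>1$ and keeps the Sobolev exponent subcritical. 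One then runs the De Giorgi iteration on level sets, the measure-shrinking lemma, and finally the expansion-of-positivity / logarithmic-lemma machinery (as in \cite{DiBTr}) to produce \eqref{eq1.17}, with each of the finitely many steps contributing a power of $\Lambda_{\lambda}(x_0,\rho)$; since the number of steps is bounded by the data, the composite constant is $\exp\big(C_5[\Lambda_{\lambda}(x_0,\rho)]^{C_6}\big)$. The factor $\lambda(\rho)^{-1}$ on the left is unavoidable because the Caccioppoli estimate is only usable for $k\lesssim\lambda(\rho)$, so the argument controls $\min u$ only after rescaling amplitudes by $\lambda(\rho)$; the $+\rho$ accounts for the truncation at level $\rho$ needed to stay in the admissible range. \emph{(ii) The local boundedness estimate for sub-solutions}, yielding \eqref{eq1.18}: here one iterates the Caccioppoli inequality for $(u-k)_{+}$ over a shrinking sequence of balls, again using $(\varPhi^{\lambda}_{\Lambda,x_0})$ to convert to an unweighted $L^{p t/(t+1)}$ Sobolev inequality, obtaining $\sup_{B_{\rho/2}}u \le \lambda(\rho)^{-1}\exp\big(C[\Lambda_{\lambda}]^{C'}\big)\big(\rho^{-n}\int_{B_\rho}u^{pt/(t+1)}\big)^{\ldots}+\rho$, and then combining with the reverse-Hölder self-improvement to replace the $L^{pt/(t+1)}$ norm by the $L^{\theta}$ norm from part (i). Chaining \eqref{eq1.17} with this sup-estimate gives \eqref{eq1.18}.

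The main obstacle I anticipate is the bookkeeping in step (i): one must ensure that the admissibility constraint $\rho\le v\le K\lambda(\rho)$ in $(\varPhi^{\lambda}_{\Lambda,x_0})$ is respected at \emph{every} level appearing in the De Giorgi iteration and in the expansion of positivity, since the levels there are determined dynamically by $\min u$ and $\sup u$, not fixed in advance. This forces a dichotomy: either the oscillation is already small compared to $\rho$ (in which case \eqref{eq1.17} is trivial) or one works at scales where $K$ can be taken as a fixed multiple of $M/\rho$, so $c_1(K)=c_1(M(R))$ becomes part of the data — which is precisely why $c_1(M)$ was declared to be data in the paragraph before the theorems. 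A secondary technical point is that condition $(\lambda)$ is needed to compare $\lambda$ and $\Lambda_{\lambda}$ across the geometric sequence of radii in the Harnack chain, so that telescoping the per-scale estimates does not produce a divergent product; the doubling-type bounds in $(\lambda)$ guarantee the product over $O(1)$ scales is again of the form $\exp\big(C[\Lambda_{\lambda}(x_0,\rho)]^{C'}\big)\lambda(\rho)^{-1}$.
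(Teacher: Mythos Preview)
Your proposal misses the central obstruction that motivates the paper's specific method and hypotheses. You plan to run the DiBenedetto--Trudinger expansion-of-positivity machinery directly within $DG^{-}_{\varPhi}$. But the paper carries out exactly that route as Theorem~\ref{th1.5}, and the outcome is inequality \eqref{eq1.20}: the right-hand side carries the factor
\[
\Big(\frac{\rho}{r}\Big)^{\tau_2}\exp\Big(c\int_{\bar r}^{\rho}[\Lambda_{\lambda}(s)]^{\beta}\,\frac{ds}{s}\Big),
\]
an accumulation over \emph{all} scales between $\bar r$ and $\rho$. This is not dominated by $\exp\big(C[\Lambda_{\lambda}(x_0,\rho)]^{C'}\big)$ in general; it can be so bounded only under the extra structural assumptions of Theorems~\ref{th1.2} and~\ref{th1.3}. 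The reason is structural: after the clustering step one must iterate from a small ball $B_{\bar r}(\bar x)$ out to $B_{\rho}(x_0)$, and each doubling contributes a factor governed by $\Lambda_{\lambda}$ at the current radius; the product over $\sim\log(\rho/\bar r)$ scales is the integral above. Your assertion that ``the number of steps is bounded by the data'' is precisely what fails --- the number of iterations in the expansion of positivity is $\log(\rho/\bar r)$, not an a priori constant. The paper says this explicitly in the paragraph preceding Theorem~\ref{th1.6}.

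The paper's remedy is to bypass the De Giorgi-class expansion of positivity entirely and instead build a capacitary barrier: one solves the auxiliary problem \eqref{eq1.24} on $D=B_{8\rho}(x_0)\setminus E_{\lambda}(\rho,N)$, proves pointwise upper and lower bounds for $w$ in terms of the capacity $C_{\varPhi}(E,B_{8\rho}(x_0);m)$ (Lemmas~\ref{lem4.1}--\ref{lem4.3}), and then invokes the comparison principle $u\geqslant w$ on $D$. This is a single-scale argument that never iterates across radii, so the constant comes out as $\exp\big(\gamma[\Lambda_{\lambda}(x_0,\rho)]^{\beta_4}\big)$, yielding Theorem~\ref{th1.6}; from there \eqref{eq1.17} and \eqref{eq1.18} follow by the layer-cake computation \eqref{eq3.3} and the sup-bound \eqref{eq3.4} just as in Section~\ref{subsect3.3}. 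The comparison step is exactly where the monotonicity hypothesis \eqref{eq1.16} is used. Your proposal never invokes \eqref{eq1.16}, which is a clear signal that the route is wrong: if a pure De Giorgi-class argument sufficed, \eqref{eq1.16} would be superfluous and Theorem~\ref{th1.4} would hold for arbitrary $u\in DG_{\varPhi}$, which the paper does not claim.
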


Before describing the method of proof, a few words about the history of the problem. Qualitative properties of functions belonging to the corresponding De Giorgi classes in the standard case, i.e. if $p=q$ are well known(we refer the reader to the well-known monograph of Ladyzhenskaya and Ural'tseva  \cite{LadUr} and to the seminal paper of DiBenedetto and Trudinger \cite{DiBTr}).Harnack's inequality for non uniformly elliptic equations has been known since the well-known paper of Trudinger \cite{Tru}.

The study of regularity of minima of functionals with non-standard growth has been initiated by Zhikov
\cite{ZhikIzv1983, ZhikIzv1986, ZhikJMathPh94, ZhikJMathPh9798, ZhikKozlOlein94},
Marcellini \cite{Marcellini1989, Marcellini1991}, and Lieberman \cite{Lieberman91},
and in the last thirty years, the qualitative theory
of second order elliptic equations with so-called log-condition
( if $\lambda(r)\equiv \Lambda_{1}(x_{0}, r) \equiv 1$) has been actively developed.Moreover,many authors have established local boundedness, Harnack's inequality  and continuity of solutions to such equations without or with singular lower order terms, as well as of local minimizers, $Q$-minimizers, and $\omega$-minimizers of the corresponding minimization problems(see, e.g. \cite{Alhutov97, AlhutovMathSb05, AlhutovKrash04, AlhutovKrash08, AlkSur, BarColMing, BarColMingStPt16, BarColMingCalc.Var.18, BenHarHasKarp20, BurchSkrPotAn, ColMing218, ColMing15, ColMingJFnctAn16, CupMarMas, DienHarHastRuzVarEpn, Fan1995, FanZhao1999, HarHastOrlicz, HarHasZAn19, HarHastLee18, HarHastToiv17, Krash2002, OkNA20, Skr, Sur } and references therein).

The case when conditions ($ \varPhi^{\lambda}_{\Lambda, x_{0}}$) hold differs substantially from the
logarithmic case. To our knowledge there are few results in this direction.
Zhikov \cite{ZhikPOMI04}
obtained a generalization of the logarithmic condition which guarantees the density
of smooth functions in Sobolev space $W^{1,p(x)}(\Omega)$.  This result holds if $1<p\leqslant p(x)$ and
$$
|p(x)-p(y)|\leqslant
\frac{ |\log |\log \mu(|x-y|)| |}{|\log |x-y||},
\quad x,y\in\Omega, \quad x\neq y,  \int\limits_{0} (\mu(r))^{-\frac{n}{p}} \frac{dr}{r} = + \infty,
$$
Particularly, the function $\mu(r)=(\log\frac{1}{r})^{L}$ satisfies the above condition if $L\leqslant \frac{p}{n}$.

Interior continuity, continuity up to the boundary and Harnack's inequality
to the $p(x)$-Laplace equation were proved in \cite{AlhutovKrash08, AlkhSurnAlgAn19, SurnPrepr2018}
under the condition \eqref{eq1.1}. These results were generalized in \cite{SkrVoitNA20, ShSkrVoit20}
for a wide class of elliptic  equations with non-logarithmic Orlicz growth.
Particularly, Harnack's inequality  was proved in \cite{SkrVoitNA20}  under condition \eqref{eq1.6}. In the proof, the authors used Trudinger's ideas \cite{Tru}.
Qualitative properties for solutions of non uniformly elliptic equations with non-standard growth
under the non-logarithmic conditions were considered in \cite{HadSavSkrVoi}.

As it was mentioned, in this paper we cover the non-uniformly elliptic case
and the case of variable exponent of the type \eqref{eq1.7}.

The main difficulty arising in the proof of the main results is related to the so-called
theorem on the expansion of positivity. Roughly speaking, having information on the measure
of the ''positivity set'' of $u$ over the ball $B_{r}(\bar{x})$:
$$|\{x \in B_{r}(\bar{x}) : u(x) \geqslant N \}| \geqslant \alpha(r)|B_{r}(\bar{x})|,$$
with some $r, N >0$ and  $\alpha(r) \in (0, 1)$, we cannot use the classical approach  of of Krylov and Safonov \cite{KrlvSfnv1980}, DiBenedetto and  Trudinger \cite{DiBTr} as it was done in the logarithmic case, i.e. if $\alpha$ is independent of $r$(see e.g. \cite{BarColMing}). Difficulties arise not only due to the presence of a constant $\alpha(r)$ depending on $r$, but also because in the process of iteration from $B_{r}(\bar{x})$ to $B_{\rho}(x_{0})$ an additional factor arises, which  can be estimated only under conditions of Theorems \ref{th1.2} and  \ref{th1.3}. So, first we prove the following expansion of positivity theorem.

\begin{theorem}\label{th1.5}
Let $u \in DG_{\varPhi}(\Omega) , u \geqslant 0$ , let $x_{0} \in \Omega$ be such that $B_{8R}(x_{0}) \subset \Omega$,and let conditions
$(\varPhi_{0})$, $(\varPhi)$,\,$(\varPhi^{\lambda}_{\Lambda})$, $(\lambda)$ be fulfilled ,  assume also that
\begin{equation}\label{eq1.19}
|\{B_{r}(y) : u > N \} | \geqslant \alpha |B_{r}(y)| ,
\end{equation}
with some $\alpha \in (0,1)$, some $0<N<M$ and $B_{r}(y) \subset B_{\rho}(x_{0})\subset B_{R}(x_{0})$, then there exist numbers  $\varepsilon_{0}, \gamma, c, \beta, \tau_{1}, \tau_{2} >0$ depending only on the data such that
\begin{equation}\label{eq1.20}
N\lambda(\rho)\alpha^{\tau_{1}} \leqslant \gamma\,\bigg(\frac{\rho}{r}\bigg)^{\tau_{2}}\exp\big(c\int\limits_{\bar{r}}^{\rho}[\Lambda_{\lambda}(s)]^{\beta}\frac{ds}{s}\big)\big\{\min\limits_{B_{\frac{\rho}{2}}(x_{0})} u + \rho \big\},\quad \bar{r}=\varepsilon_{0}\alpha^{2}\frac{r}{\Lambda_{\lambda}(r)}.
\end{equation}
\end{theorem}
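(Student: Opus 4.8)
The plan is to reduce the generalized Orlicz De Giorgi class to a classical one with scale–dependent constants, then to prove a ``one–step'' expansion of positivity, and finally to iterate it along a Landis–type chain of balls joining $B_r(y)$ to $B_{\rho/2}(x_0)$. For the reduction, fix a ball $B_s(z)\subset B_R(x_0)$ and a level $k$ with $M_{\pm}(k,s)\leqslant \gamma\,\lambda(s)$ (the upper cut–off $M_\pm\leqslant 2M$ is always available since $M=\sup_\Omega|u|$ is data); then the values $v=u/s$ entering \eqref{eq1.8} lie in the admissible range $s\leqslant v\leqslant K\lambda(s)$ of $(\varPhi^{\lambda}_{\Lambda})$. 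Using $(\varPhi_0)$, $(\varPhi)$ and the reverse–Hölder bound in $(\varPhi^{\lambda}_{\Lambda})$ together with Hölder's inequality, \eqref{eq1.8} turns into a standard De Giorgi inequality for $u$ in $B_s(z)$ with exponent $p_1:=p\tfrac{t}{t+1}>1$ and with a De Giorgi constant of the form $\gamma\,[\Lambda_{\lambda}(s)]^{\beta_0}$ (this is Lemma~\ref{lem2.2}). From classical De Giorgi theory, now with constants depending on $\Lambda_{\lambda}(s)$, I extract the two building blocks: a \emph{measure–to–pointwise} lemma (there is $\nu_0(s)=\exp(-\gamma[\Lambda_{\lambda}(s)]^{\beta_1})$ such that $|\{u<\mu\}\cap B_{2s}(z)|\leqslant \nu_0(s)|B_{2s}(z)|$ and $\mu\leqslant\gamma\lambda(2s)$ force $u\geqslant \mu/2$ in $B_s(z)$), and a \emph{logarithmic energy estimate} for $\ln(\mu/(u+\varepsilon))_{+}$, whose right–hand side does not grow with the truncation level.

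Second, the one–step expansion. Starting from \eqref{eq1.19} on $B_r(y)$, I transfer the measure bound to the half ball and apply the logarithmic estimate on the geometric sequence of levels $N,N/2,\dots,N/2^{j}$ together with the De Giorgi isoperimetric inequality; since $\{u>N\}$ has density $\geqslant\alpha$ and level sets are nested, this yields a quantitative decay of $|\{u<N/2^{j}\}\cap B_{r/2}(y)|$ in $j$, the rate carrying a power of $\alpha$ and a power of $\Lambda_{\lambda}(r)$. Choosing $j=j(\alpha,\Lambda_{\lambda}(r))$ so as to bring this measure below $\nu_0(\varepsilon_0\alpha^2 r/\Lambda_{\lambda}(r))$ and feeding the result into the measure–to–pointwise lemma, I obtain $u\geqslant c\,\alpha^{\tau_1}\min(N,\gamma\lambda(\bar r))$ on $B_{\bar r}(y)$ with $\bar r=\varepsilon_0\alpha^2 r/\Lambda_{\lambda}(r)$; the simultaneous shrinkage of the radius by $\alpha^2/\Lambda_{\lambda}(r)$ and of the level by $\alpha^{\tau_1}$ is precisely what balances the number of iterations against the De Giorgi threshold. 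If here the minimum equals $\gamma\lambda(\bar r)$, then \eqref{eq1.20} follows at once from $(\lambda)$ and the monotonicity of $\lambda$, so I may henceforth assume the level stays $\lesssim\lambda$.

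Third, the chain of balls and the assembly. Join $y$ to $x_0$ by a finite family $B_{s_i}(z_i)$, $i=0,\dots,m$, with $s_0=\bar r$, $s_{i+1}=2s_i$, $s_m\asymp\rho$, $m\asymp\log_2(\rho/\bar r)$, each $B_{s_{i+1}}(z_{i+1})$ covering a fixed fraction of $B_{s_i}(z_i)$ and all contained in $B_{\rho}(x_0)$. Positivity $u\geqslant N_i$ on $B_{s_i}(z_i)$ gives $|\{u>N_i\}\cap B_{s_{i+1}}(z_{i+1})|\geqslant\gamma^{-1}|B_{s_{i+1}}(z_{i+1})|$ with a fixed density, so the one–step expansion (applied with this fixed $\alpha$) upgrades it to $u\geqslant N_{i+1}$ on $B_{s_{i+1}}(z_{i+1})$ with $N_{i+1}=\gamma_0^{-1}\exp(-c[\Lambda_{\lambda}(s_i)]^{\beta})N_i$. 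Iterating, $N_m\geqslant \gamma_0^{-m}\exp(-c\sum_i[\Lambda_{\lambda}(s_i)]^{\beta})N_0$, where $\gamma_0^{-m}=(\bar r/\rho)^{\tau_2}$ and, by monotonicity of $\Lambda_{\lambda}$ and $(\lambda)$, $\sum_i[\Lambda_{\lambda}(s_i)]^{\beta}\leqslant c\int_{\bar r}^{\rho}[\Lambda_{\lambda}(s)]^{\beta}\tfrac{ds}{s}$. Since $N_0=c\,\alpha^{\tau_1}N$, $\bar r/\rho=\varepsilon_0\alpha^2(r/\rho)/\Lambda_{\lambda}(r)$, and $u\geqslant N_m$ at scale $\rho$ forces $N_m\leqslant\gamma(\min_{B_{\rho/2}(x_0)}u+\rho)$ — the additive $\rho$ appearing because $(\varPhi_0)$ prevents distinguishing values of $u$ of order $\rho$ from $0$ — rearranging and absorbing $\Lambda_{\lambda}(r)$ into the exponential integral gives \eqref{eq1.20}.

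The step I expect to be the main obstacle is the one–step expansion itself, and within it the bookkeeping of the admissibility constraint $M_\pm\lesssim\lambda$: every use of the reduction lemma and of the two building blocks is legitimate only while the relevant level of $u$ stays below $\gamma\lambda$ at the current scale, yet along the chain $N_i$ decreases while $\lambda(s_i)$ increases, so the constraint must be arranged once and for all at the starting scale $\bar r$ and then shown to persist. Equally delicate is producing a genuine power $\alpha^{\tau_1}$ instead of an exponential $\exp(-c\alpha^{-\mathrm{const}})$: this forces the use of the logarithmic energy estimate (whose right–hand side is level–independent) rather than a plain De Giorgi iteration, and the matching of its outcome with the $\alpha^2$–shrinkage of $\bar r$. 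Finally, the exponents must be chosen so that the per–scale losses $\exp(-c[\Lambda_{\lambda}(s_i)]^{\beta})$ telescope exactly into $\exp(-c\int_{\bar r}^{\rho}[\Lambda_{\lambda}(s)]^{\beta}\tfrac{ds}{s})$ and not into a cruder bound that would be useless for the continuity and Harnack theorems that rely on Theorem~\ref{th1.5}.
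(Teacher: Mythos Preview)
Your chain-of-balls iteration (Step~3) is essentially the paper's own argument and is fine. The gap is in your one-step expansion (Step~2), specifically your proposed mechanism for obtaining a genuine power $\alpha^{\tau_1}$.

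You correctly identify that a plain De Giorgi iteration over levels $N,N/2,\dots$ starting from density $\alpha$ loses an \emph{exponential} in $\alpha^{-c}$ (this is visible in the proof of the auxiliary Lemma~\ref{lem2.3}, where the constant $C_*$ behaves like $\alpha_0^{-pt/((p-1)t-1)}$). Your remedy is a ``logarithmic energy estimate for $\ln(\mu/(u+\varepsilon))_+$ whose right-hand side does not grow with the truncation level.'' But such an estimate is obtained by testing the equation with $\eta=\zeta^q/(u+\varepsilon)^{p-1}$ or similar, i.e.\ it is a PDE fact for (super)solutions, not a consequence of membership in $DG_\varPhi$. The De Giorgi class gives you only \eqref{eq1.8}, one level at a time, and summing those over dyadic levels does not produce a level-independent logarithmic Caccioppoli bound. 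So as stated, your Step~2 does not go through for $u\in DG_\varPhi(\Omega)$; it would prove the theorem only for solutions, which is a strictly weaker statement (and for solutions the paper later uses a completely different, potential-theoretic route anyway).

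The paper replaces your missing log-estimate by the DiBenedetto--Gianazza--Vespri \emph{local clustering lemma} (Lemma~\ref{lem1.1}): from the $W^{1,1}$ bound supplied by \eqref{eq2.2} with $\theta=1$ (which gives $\mathcal{K}=\gamma[\Lambda_\lambda(r)]^{1/p}$) and the density-$\alpha$ hypothesis on $B_r(y)$, one finds a point $\bar{x}\in B_r(y)$ and a radius $\bar r=\varepsilon_0\alpha^2 r/\Lambda_\lambda(r)$ on which the density jumps to a \emph{fixed} value $1/4$, independent of $\alpha$. This is what converts the $\alpha$-dependence into the explicit factor $\alpha^2$ in $\bar r$ (and hence, via the doubling iteration and condition $(\lambda)$, into the powers $\alpha^{\tau_1}$, $(r/\rho)^{\tau_2}$). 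After the clustering step, Lemmas~\ref{lem2.3} and~\ref{lem2.4} are applied only with the \emph{fixed} density $1/4$, so their $\alpha_0$-dependent constants become structural, and the iteration to scale $\rho$ proceeds exactly as in your Step~3. Note also that the positivity first lands on $B_{\bar r}(\bar x)$ for some $\bar x\in B_r(y)$, not on $B_{\bar r}(y)$ as you wrote; this is harmless for the chain but is another symptom of the missing clustering argument.
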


The main step in the proof of Theorem \ref{th1.5} is the following local clustering lemma due to DiBenedetto,Gianazza and Vespri \cite{DiBGiV} (see also \cite{DiBGiaVes, Liao, Sur}).
\begin{lemma}\label{lem1.1}
Let $K_{r}(y)$ be a cube in $\mathbb{R}^{n}$ of edge $r$ centered at $y$ and let $u\in W^{1,1}(K_{r}(y))$ satisfies
\begin{equation}\label{eq1.21}
||(u-k)_{-}||_{W^{1,1}(K_{r}(y))} \leqslant \mathcal{K}\,k\,r^{n-1},\,\,\,\,\,\,and\,\,\,\,\,\, |\{K_{r}(y) : u\geqslant k \}|\geqslant \alpha |K_{r}(y)|,
\end{equation}
with some $\alpha\in (0,1)$, $k\in\mathbb{R}^{1}$ and $\mathcal{K} >0$.Then for any $\xi \in (0,1)$ and any $\nu\in (0,1)$ there exists $\bar{x} \in K_{r}(y)$ and $\varepsilon=\varepsilon(n) \in (0,1)$ such that
\begin{equation}\label{eq1.22}
|\{K_{\bar{r}}(\bar{x}):  u\geqslant \xi\,k \}| \geqslant (1-\nu) |K_{\bar{r}}(y)|,\,\,\, \bar{r}:=\varepsilon \alpha^{2}\frac{(1-\xi)\nu}{\mathcal{K}}\,r.
\end{equation}
\end{lemma}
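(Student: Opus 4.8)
The plan is to use the partition-and-contradiction scheme of \cite{DiBGiV}, whose only analytic ingredient is De Giorgi's isoperimetric inequality. Fix $\xi,\nu\in(0,1)$ and subdivide $K_{r}(y)$ into $M^{n}$ congruent subcubes $Q_{1},\dots,Q_{M^{n}}$ of edge $r/M$, where $M$ is the smallest integer with $M\ge c(n)\,\mathcal{K}\,\alpha^{-2}(1-\xi)^{-1}\nu^{-1}$ for a dimensional constant $c(n)$ fixed below (one may assume this lower bound is $\ge 1$, otherwise the statement is vacuous). Let $\bar x_{i}$ be the center of $Q_{i}$. Arguing by contradiction, assume the conclusion fails at every $\bar x_{i}$ for $\bar r=r/M$, so that $|\{Q_{i}:u<\xi k\}|\ge\nu\,|Q_{i}|$ for all $i$. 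Call $Q_{i}$ \emph{rich} if $|\{Q_{i}:u\ge k\}|\ge\tfrac{\alpha}{2}|Q_{i}|$; since $\sum_{i}|\{Q_{i}:u\ge k\}|=|\{K_{r}(y):u\ge k\}|\ge\alpha\,|K_{r}(y)|$ by \eqref{eq1.21} while the non-rich subcubes contribute at most $\tfrac{\alpha}{2}|K_{r}(y)|$ to this sum, at least $\tfrac{\alpha}{2}M^{n}$ of the $Q_{i}$ are rich.

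On a rich subcube $Q$ one has simultaneously $|\{Q:u\ge k\}|\ge\tfrac{\alpha}{2}|Q|$ and $|\{Q:u<\xi k\}|\ge\nu|Q|$. Applying De Giorgi's isoperimetric inequality to $v=(u-k)_{-}$ on $Q$ with the levels $0$ and $(1-\xi)k$ (so that $\{v=0\}=\{u\ge k\}$, $\{v\ge(1-\xi)k\}=\{u\le\xi k\}$, and $|\nabla v|=|\nabla u|$ on $\{\xi k<u<k\}$, while the factor $\ell^{\,n+1}/|Q|$ in that inequality reduces to the edge $\ell=r/M$) gives
\[
\int_{\{\xi k<u<k\}\cap Q}|\nabla u|\,dx\ \ge\ \frac{\alpha(1-\xi)\nu}{\gamma(n)}\,k\,\Big(\frac{r}{M}\Big)^{n-1}.
\]
Summing over the ($\ge\tfrac{\alpha}{2}M^{n}$) pairwise disjoint rich subcubes, using $|\nabla u|=|\nabla(u-k)_{-}|$ on $\{u<k\}$, the bound $\int_{K_{r}(y)}|\nabla(u-k)_{-}|\,dx\le\mathcal{K}\,k\,r^{n-1}$ from \eqref{eq1.21}, and $M^{n}(r/M)^{n-1}=M\,r^{n-1}$, we reach
\[
\frac{\alpha^{2}(1-\xi)\nu}{\gamma(n)}\,k\,M\,r^{n-1}\ \le\ \mathcal{K}\,k\,r^{n-1},
\]
that is $M\le\gamma(n)\,\mathcal{K}\,\alpha^{-2}(1-\xi)^{-1}\nu^{-1}$. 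Taking $c(n):=\gamma(n)+1$ this contradicts the definition of $M$, so the conclusion holds for some subcube $Q_{i_{0}}$: $|\{Q_{i_{0}}:u\ge\xi k\}|\ge(1-\nu)|Q_{i_{0}}|$.

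Finally I would put the radius in normal form. Since $M\le c(n)\mathcal{K}\,\alpha^{-2}(1-\xi)^{-1}\nu^{-1}+1\le 2c(n)\mathcal{K}\,\alpha^{-2}(1-\xi)^{-1}\nu^{-1}$, the edge of $Q_{i_{0}}$ is at least $(2c(n))^{-1}\alpha^{2}(1-\xi)\nu\,r/\mathcal{K}$; then a trivial averaging — if the density of $\{u\ge\xi k\}$ in a cube is $\ge1-\nu$, the mean of that density over any partition of the cube into equal subcubes is again $\ge1-\nu$, so some subcube retains the property — lets one replace $Q_{i_{0}}$ by a subcube of edge exactly $\bar r=\varepsilon\,\alpha^{2}(1-\xi)\nu\,r/\mathcal{K}$ with a suitable $\varepsilon=\varepsilon(n)\in(0,1)$; its center is the required $\bar x$. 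The argument is essentially forced by the two hypotheses in \eqref{eq1.21}; the one place where care is needed is applying De Giorgi's isoperimetric inequality \emph{at the scale} $r/M$ — it is the single surviving factor $r/M$ there that produces the $\mathcal{K}^{-1}$ in $\bar r$ — together with the elementary bookkeeping just described, and I do not anticipate any genuine obstacle beyond these points.
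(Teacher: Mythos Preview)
Your proof is correct and follows precisely the partition-and-contradiction argument of DiBenedetto--Gianazza--Vespri that the paper invokes; the paper itself does not give a proof of this lemma but simply cites \cite{DiBGiV} (see also \cite{DiBGiaVes, Liao, Sur}), so your write-up is in fact more detailed than what appears there. The only step that is slightly informal is the final ``trivial averaging'' used to pass from edge $r/M$ to edge exactly $\bar r$: the averaging argument requires $\bar r$ to divide $r/M$, which need not hold, but since $\varepsilon=\varepsilon(n)$ is at your disposal and the lemma is only ever applied up to dimensional constants, this is harmless.
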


As it was already mentioned during the iteration from $B_{r}(\bar{x})$ to $B_{\rho}(x_{0})$ an additional factor arises, that even in the case $\Lambda(x_{0}, \rho)\leqslant const$ cannot be estimated.   To overcome it and to prove Theorem \ref{th1.4} we use a workaround that goes back to Mazya \cite{Maz} and Landis \cite{Landis_uspehi1963, Landis_mngrph71}. For  the proof of the following expansion of positivity theorem we use the potential-type auxiliary solutions. We also note that by the presence of the function $\lambda(r)$ in condition $(\varPhi^{\lambda}_{\Lambda, x_{0}})$ we cannot use Moser's method, adapting the ideas of Trudinger\cite{Tru} (see, e.g. \cite{AlhutovKrash08, AlkhSurnAlgAn19, SurnPrepr2018, ShSkrVoit20}).
\begin{theorem}\label{th1.6}
Let $u$ be a   non-negative weak super-solution to Eq. \eqref{eq1.14} in $\Omega$, let conditions  $(\varPhi_{0})$, $(\varPhi)$ and  ($\varPhi^{\lambda}_{\Lambda,x_{0}}$) be fulfilled, assume also that condition \eqref{eq1.16} holds. Then there exist positive constants $\gamma $, $\beta_{3}$ and $\beta_{4}$ depending only on the data, such that for any $0 < N< M$ and any $B_{8\rho}(x_{0})\subset \Omega$ there holds
\begin{equation}\label{eq1.23}
N\,\lambda(\rho)\,\bigg(\frac{|E(\rho, N)|}{\rho^{n}}\bigg)^{\beta_{3}}  \leqslant \gamma\,\exp\big(\gamma\,\big[\Lambda_{\lambda}(x_{0},\frac{\rho}{4} )\big]^{\beta_{4}}\big)\big\{\min\limits_{B_{\frac{\rho}{2}}(x_{0})}u + \rho \big\},
\end{equation}
where $ E(\rho, N):= B_{\rho}(x_{0})\cap \{u(x) > N \}$.
\end{theorem}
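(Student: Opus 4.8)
The plan is to establish \eqref{eq1.23} by constructing a suitable potential-type comparison (auxiliary) super-solution on an annulus and invoking the weak Harnack machinery locally, then iterating the resulting estimate over a geometric sequence of radii. First I would fix $B_{8\rho}(x_{0})\subset\Omega$ and set $m:=\min_{B_{\rho/2}(x_{0})}u$, so it suffices to bound $N\lambda(\rho)(|E(\rho,N)|/\rho^{n})^{\beta_{3}}$ by $\gamma\exp(\gamma[\Lambda_{\lambda}(x_{0},\rho/4)]^{\beta_{4}})(m+\rho)$. The key local ingredient is a ''measure-to-pointwise'' step: from the hypothesis that $u>N$ on a set $E$ of measure $|E|=:\alpha\rho^{n}$ inside $B_{\rho}(x_{0})$, together with the De Giorgi inequalities \eqref{eq1.8} and condition $(\varPhi)$, one derives an energy bound for $(u-\xi N)_{-}$ of the type required in Lemma \ref{lem1.1}, with $\mathcal{K}$ controlled by a power of $\Lambda_{\lambda}(x_{0},\rho)$ via $(\varPhi^{\lambda}_{\Lambda,x_{0}})$ (this is exactly where the weighted Hölder inequality hidden in $\Lambda_{\pm,\varPhi}$ enters — one passes from $\int\varPhi(x,|\nabla u|)$ to $\int|\nabla u|^{p}$ by Hölder against $[\varPhi(x,\cdot)]^{-t}$). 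Applying Lemma \ref{lem1.1} then upgrades the measure-density statement from $\alpha$ at scale $\rho$ to $(1-\nu)$ at the smaller scale $\bar r = \varepsilon\alpha^{2}(1-\xi)\nu\,\rho/\mathcal K$.

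Next I would run a standard De Giorgi iteration on $B_{\bar r}(\bar x)$: once the ''bad set'' $\{u<\xi N\}$ occupies less than a critical fraction $\nu_{0}$ (depending only on the data) of $B_{\bar r}(\bar x)$, the De Giorgi class property forces $u\geqslant \tfrac12\xi N$ on $B_{\bar r/2}(\bar x)$, provided the natural smallness condition $\xi N\geqslant \bar r$ holds (otherwise $\bar r$ dominates and the estimate \eqref{eq1.23} is trivial with the ''$+\rho$'' term). At this point I have pointwise positivity $u\gtrsim \xi N$ on a ball of radius $\sim\bar r$, i.e. $u$ is bounded below by a definite multiple of $N$ on a ball whose radius is smaller than $\rho$ only by the factor $\alpha^{2}/\Lambda_{\lambda}(x_{0},\rho)$ (up to data constants). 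The final step is to propagate this lower bound from $B_{\bar r/2}(\bar x)$ back out to $B_{\rho/2}(x_{0})$. For this I would use the Mazya–Landis workaround mentioned in the text: compare $u$ from below, on the annulus $B_{\rho/2}(x_{0})\setminus B_{\bar r/2}(\bar x)$, with the potential-type auxiliary solution $w$ of \eqref{eq1.14} that equals $\tfrac12\xi N$ on the inner sphere and $0$ on $B_{\rho/2}(x_{0})$ — condition \eqref{eq1.16} gives the comparison principle, so $u\geqslant w$ there, and in particular $u(x_{0})\geqslant$ (something like) $\tfrac12\xi N\cdot\mathrm{cap}$-ratio. Estimating the capacity ratio of the two spheres under $\varPhi$ produces precisely the factor $\exp(c\,[\Lambda_{\lambda}(x_{0},\rho/4)]^{\beta})$ coming from integrating the $\Lambda_{\lambda}$-bound across the dyadic scales between $\bar r$ and $\rho$, plus the power $(|E|/\rho^{n})^{\beta_{3}}$ coming from the $\alpha$-dependence of $\bar r$. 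Collecting constants and absorbing $\lambda(\rho)$ via its monotonicity yields \eqref{eq1.23}.

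The main obstacle is controlling the auxiliary potential on the annulus in the non-uniformly elliptic, non-logarithmic setting: unlike in the log-case, the capacity of $B_{\rho}(x_{0})\setminus B_{\bar r}(\bar x)$ with respect to $\varPhi(x,\cdot)$ is not comparable to a clean power of $\bar r/\rho$, and the ratio $\bar r/\rho$ itself carries the dangerous factor $1/\Lambda_{\lambda}(x_{0},r)$ at every intermediate scale. One must therefore show that the ''loss'' accumulated in passing from $B_{\bar r}$ to $B_{\rho}$ is summable in the sharp exponential form $\exp(\gamma[\Lambda_{\lambda}(x_{0},\rho/4)]^{\beta_{4}})$ rather than something worse — this is where condition $(\varPhi^{\lambda}_{\Lambda,x_{0}})$ with the exponents $t,s$ satisfying $\tfrac{1}{tp}+\tfrac{1}{sp}<\tfrac1n$ is used in an essential way, to guarantee that the relevant Sobolev–Poincaré / capacity estimates for the weighted problem hold with a single power of $\Lambda_{\lambda}$ per scale, and that these powers combine (via $(\lambda)$, controlling how $\Lambda_{\lambda}(x_{0},r)$ grows as $r\downarrow0$) into the stated exponential.
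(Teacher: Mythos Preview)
Your plan takes essentially the route of Theorem~\ref{th1.5} (clustering via Lemma~\ref{lem1.1}, pointwise positivity on a tiny ball $B_{\bar r}(\bar x)$, then propagation from $B_{\bar r}$ out to $B_{\rho/2}$), and this is precisely the mechanism Theorem~\ref{th1.6} is designed to circumvent. The paper states explicitly that the iteration from $B_{\bar r}(\bar x)$ to $B_{\rho}(x_{0})$ produces an extra factor $\exp\big(c\int_{\bar r}^{\rho}[\Lambda_{\lambda}(s)]^{\beta}\,ds/s\big)$ --- see \eqref{eq1.20} --- which in general is strictly worse than the single-scale factor $\exp\big(\gamma[\Lambda_{\lambda}(x_{0},\rho/4)]^{\beta_{4}}\big)$ claimed in \eqref{eq1.23}. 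Your final paragraph acknowledges this difficulty but resolves it by appealing to condition~$(\lambda)$; however, Theorem~\ref{th1.6} does \emph{not} assume~$(\lambda)$, so this is a genuine gap.

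The paper's proof avoids clustering and any small-to-large iteration altogether. The auxiliary solution $w$ of \eqref{eq1.24} is taken on $D=B_{8\rho}(x_{0})\setminus E_{\lambda}(\rho,N)$, with the \emph{level set} $E_{\lambda}(\rho,N)=B_{\rho}(x_{0})\cap\{u>\lambda(\rho)N\}$ itself playing the role of the inner obstacle (boundary datum $m=\lambda(\rho)N$), not a small ball produced by Lemma~\ref{lem1.1}. One then proves an upper bound for $w$ on the annulus $K_{3/2\rho,8\rho}$ in terms of $C_{\varPhi}(E,B_{8\rho}(x_{0});m)$ (Lemma~\ref{lem4.1}) and, crucially, a \emph{measure} lower bound (Lemmas~\ref{lem4.2}--\ref{lem4.3}): $w$ --- hence $u$, by comparison via \eqref{eq1.16} --- exceeds $\bar\varepsilon\,m\,[\Lambda_{\lambda}(x_{0},\rho)]^{-\bar\beta_{6}}(|E|/\rho^{n})^{\bar\beta_{7}}$ on a portion of $K_{3/2\rho,4\rho}$ of relative measure at least $\vartheta[\Lambda_{\lambda}(x_{0},\rho)]^{-\bar\beta_{5}}$. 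This is already information at scale $\rho$, so a \emph{single} application of Lemmas~\ref{lem2.3} and~\ref{lem2.4} with $\alpha_{0}=\vartheta[\Lambda_{\lambda}(x_{0},\rho)]^{-\bar\beta_{5}}$ finishes the proof, with no iteration across dyadic radii and no need for~$(\lambda)$. In short: place the potential on the level set, not on a clustered ball.
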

To prove Theorem \ref{th1.6} we consider the solution $w$ of the following problem
\begin{equation}\label{eq1.24}
div\bigg(\varPhi(x, |\nabla w|) \frac{\nabla w}{ |\nabla w|^{2}}\bigg)= 0,\quad x\in D:=B_{8\rho}(x_{0})\setminus E,\quad w- m \psi \in W^{1,\varPhi}_{0}(D),
\end{equation}
 where $E\subset B_{\rho}(x_{0})$, $m \in (\rho, \lambda(\rho) M)$ is some fixed positive number and $\psi \in W^{1,\varPhi}_{0}(B_{8\rho}(x_{0}))$, $\psi =1 $ on $E$.

In Section \ref{Sec4} we prove upper and lower bounds for solutions of problem \eqref{eq1.24}, from which Theorem \ref{th1.6} is obtained as a simple corollary. Thanks to the use of auxiliary solutions of problem \eqref{eq1.24}, it is possible to avoid the appearance of an additional factor during the iteration from $B_{r}(x_{0})$ to $B_{\rho}(x_{0})$.

The rest of the paper contains the proof of the above theorems. In Section \ref{Sec2} we collect some auxiliary
propositions and required integral estimates of functions belonging to the corresponding De Giorgi classes. Section \ref{Sec3}
contains the proof of continuity, Theorem \ref{th1.1}, expansion of positivity, Theorem \ref{th1.5} and the proof of Harnack type inequalities,
Theorems \ref{th1.2} and \ref{th1.3}. Upper and lower bounds of auxiliary solutions are proved in Section \ref{Sec4}. A variant of the expansion of the positivity theorem, Theorem \ref{th1.6} is also proved in Section \ref{Sec4}. Finally, in Section \ref{Sec4}
we sketch a proof of  Harnack's inequality, Theorem \ref{th1.4}, leaving the details to the reader.


\section{Auxiliary material and integral estimates}\label{Sec2}

\subsection{ Auxiliary Lemma}\label{subsect2.1}

The following  lemma will be used in the sequel, it is  the well-known De Giorgi-Poincare lemma (see \cite{LadUr}, Chapter $2$).

\begin{lemma}\label{lem2.1}
{\it Let $u \in W^{1,1}(B_{r}(y))$ for some $r > 0$, and $y \in \mathbb{R}^{n}$ . Let $k, l$ be real
numbers such that $k < l$. Then there exists a constant $\gamma$ depending only on $n$ such that
\begin{equation*}
(l-k) |A^{-}_{k,r}||B_{r}(y)\setminus A^{-}_{l,r}| \leqslant \gamma r^{n+1} \int\limits_{A^{-}_{l,r}\setminus A^{-}_{k,r}} |\nabla u| dx,
\end{equation*}
where $A^{-}_{k,r} = B_{r}(y)\cap \{u < k\}$.
}
\end{lemma}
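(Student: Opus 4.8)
The plan is to reduce the lemma to the classical line–potential estimate. Introduce the truncation $v(x):=\min\{l,\max\{k,u(x)\}\}-k$, which lies in $W^{1,1}(B_{r}(y))$, takes values in $[0,l-k]$, equals $0$ a.e.\ on $A^{-}_{k,r}=B_{r}(y)\cap\{u<k\}$, equals $l-k$ a.e.\ on $B_{r}(y)\setminus A^{-}_{l,r}=B_{r}(y)\cap\{u\geqslant l\}$, and satisfies $\nabla v=\nabla u\,\chi_{\{k\leqslant u<l\}}$ a.e., so that $\int_{B_{r}(y)}|\nabla v|\,dx=\int_{A^{-}_{l,r}\setminus A^{-}_{k,r}}|\nabla u|\,dx$. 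Extend $\nabla v$ by zero outside $B_{r}(y)$. The first step is the elementary bound, valid for a.e.\ pair $(x,x')\in B_{r}(y)\times B_{r}(y)$,
$$|v(x')-v(x)|\leqslant\int_{0}^{|x-x'|}\big|\nabla v\big(x+\rho\,\omega_{x,x'}\big)\big|\,d\rho,\qquad \omega_{x,x'}:=\frac{x'-x}{|x'-x|},$$
which holds for $v\in C^{1}$ by the fundamental theorem of calculus along the segment joining $x$ to $x'$ (this segment stays inside $B_{r}(y)$), and extends to $v\in W^{1,1}$ by mollification --- this is just the absolute continuity on lines underlying the space $W^{1,1}$.

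Next I would integrate this inequality over $x\in A^{-}_{k,r}$ and $x'\in B_{r}(y)\setminus A^{-}_{l,r}$. On this product set $v(x')-v(x)=(l-k)-0=l-k$ a.e., so the left-hand side contributes exactly $(l-k)\,|A^{-}_{k,r}|\,|B_{r}(y)\setminus A^{-}_{l,r}|$. For the right-hand side, for each fixed $x$ I enlarge the $x'$-domain to $B_{2r}(x)$ (legitimate since both points sit in a ball of radius $r$) and pass to polar coordinates $x'=x+\rho'\omega$, $\rho'\in(0,2r)$, $\omega\in S^{n-1}$, $dx'=(\rho')^{n-1}d\rho'\,d\omega$; interchanging the $\rho$- and $\rho'$-integrations over $\{0<\rho<\rho'<2r\}$ and integrating out $\rho'$ gives, after rewriting $\int_{S^{n-1}}\int_{0}^{2r}|\nabla v(x+\rho\omega)|\,d\rho\,d\omega=\int_{B_{2r}(x)}|\nabla v(z)|\,|z-x|^{1-n}\,dz$,
$$(l-k)\,|A^{-}_{k,r}|\,|B_{r}(y)\setminus A^{-}_{l,r}|\leqslant\frac{(2r)^{n}}{n}\int_{A^{-}_{k,r}}\int_{B_{2r}(x)}\frac{|\nabla v(z)|}{|z-x|^{n-1}}\,dz\,dx.$$
Now apply Fubini's theorem and the elementary estimate $\int_{B_{2r}(z)}|z-x|^{1-n}\,dx=2r\,|S^{n-1}|$ to obtain
$$(l-k)\,|A^{-}_{k,r}|\,|B_{r}(y)\setminus A^{-}_{l,r}|\leqslant\frac{2^{n+1}\,|S^{n-1}|}{n}\,r^{n+1}\int_{B_{r}(y)}|\nabla v(z)|\,dz=\frac{2^{n+1}\,|S^{n-1}|}{n}\,r^{n+1}\int_{A^{-}_{l,r}\setminus A^{-}_{k,r}}|\nabla u|\,dz,$$
which is the claim with $\gamma=2^{n+1}|S^{n-1}|/n$, a constant depending only on $n$.

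There is no real obstacle here: the argument is the standard potential estimate behind the Sobolev and Poincar\'e inequalities. The only points needing a little care are the justification of the pointwise line-integral bound for merely $W^{1,1}$ functions (handled by mollification, using that truncation and the a.e.\ identity for $\nabla v$ are stable under the limit) and the repeated use of Fubini's theorem, which is licit because all integrands are non-negative.
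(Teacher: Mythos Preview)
Your argument is correct and is precisely the classical De Giorgi line--potential estimate: truncate, use absolute continuity on segments, pass to polar coordinates, and close with Fubini against the Riesz kernel $|z-x|^{1-n}$. The paper does not actually supply a proof of this lemma; it simply quotes it as the well-known De Giorgi--Poincar\'e inequality and refers to Ladyzhenskaya--Ural'tseva \cite{LadUr}, Chapter~2, where the proof runs along the same lines as yours.
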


\subsection{ Local energy estimates}\label{subsect2.2}
For $\theta \in(0, p]$ and $v >0$ set $\varphi_{\theta}(x, v):=\dfrac{\varPhi(x, v)}{v^{\theta}}$.
The following lemma is a consequence of the definition of the De Giorgi class $DG_{\varPhi}(B_{R}(x_{0}))$ and  of the following analogue of the Young inequality

\begin{equation}\label{eq2.1}
\varphi_{\theta}(x,a)\,b^{\theta} \leqslant \varepsilon^{-\theta}\,\varPhi(x,a) + b^{\theta}\,\varphi_{\theta}(x,\varepsilon b),\quad \varepsilon, a, b >0,\quad \theta \in(0, p],
\end{equation}
indeed, if $b\leqslant \varepsilon^{-1} a$, then $\varphi_{\theta}(x,a)\,b^{\theta} \leqslant \varepsilon^{-\theta}\,a^{\theta}\,\varphi_{\theta}(x, a)=\varepsilon^{-\theta} \varPhi(x,a)$, and if $b \geqslant \varepsilon^{-1} a$, then since by condition $(\varPhi)$ $\varphi_{\theta}(x,\cdot)$ is non-decreasing  $\varphi_{\theta}(x,a)\,b^{\theta} \leqslant \, b^{\theta}\,\varphi_{\theta}(x,\varepsilon b)$.

\begin{lemma}\label{lem2.2}
{\it Let $u\in DG_{\varPhi}(B_{R}(x_{0}))$, then for any $r <R$ , any $k\in \mathbb{R}$ ,  any $\sigma \in (0,1)$  and any
$\theta \in [1,p\frac{t}{t+1}]$ next inequalities hold

\begin{multline}\label{eq2.2}
\int\limits_{A^{\pm}_{k,r}}
|\nabla u|^{\theta}\,\zeta^{q}(x)\, dx \leqslant \frac{\gamma}{\sigma^{\theta\frac{q}{p}}}\bigg(\frac{M_{\pm}(k,r)}{r}\bigg)^{\theta} r^{n}\,\big[\Lambda_{\varPhi}\big(x_{0}, r,\frac{M_{\pm}(k,r)}{r} \big)\big]^{\frac{\theta}{p}} \bigg(\frac{|A^{\pm}_{k,r}|}{|B_{r}(x_{0})|}\bigg)^{1-\frac{\theta}{tp}-\frac{\theta}{sp}}.
\end{multline}
Here \quad  $M_{\pm}(k,r):=\esssup\limits_{B_{r}(x_{0})} (u-k)_{\pm}$,   \quad $\zeta(x)$ is the same as in \eqref{eq1.8} and
$\Lambda_{\varPhi}\big(x_{0}, r,\frac{M_{\pm}(k,r)}{r} \big)$ was defined in $(\varPhi^{\lambda}_{\Lambda, x_{0}})$.

}
\end{lemma}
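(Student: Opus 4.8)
The plan is to start from the De Giorgi inequality \eqref{eq1.8} and convert the $\varPhi$-gradient term on the left into a power $|\nabla u|^\theta$, at the cost of Hölder factors involving $\Lambda_\varPhi$, then do the same on the right-hand side to replace $\varPhi\big(x,\frac{(u-k)_\pm}{\sigma r}\big)$ by the explicit quantity $\sigma^{-\gamma}(M_\pm(k,r)/r)^\theta r^n [\Lambda_\varPhi]^{\theta/p}(|A^\pm_{k,r}|/|B_r|)^{1-\theta/(tp)-\theta/(sp)}$. The bridge in both directions is the auxiliary Young inequality \eqref{eq2.1} applied with the function $\varphi_\theta(x,v)=\varPhi(x,v)/v^\theta$, which is non-decreasing in $v$ by condition $(\varPhi)$ (since $\theta\le p$). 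Throughout one replaces the weight $\varPhi(x,v)$ by $\varphi_\theta(x,v)\,v^\theta$ and isolates the "coefficient" $\varphi_\theta$, which is then handled by Hölder's inequality in $x$ against the exponents $s$ and $-t$ appearing in the definition of $\Lambda_{\pm,\varPhi}$.

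Concretely: first I would bound the left side. Write $|\nabla u|^\theta\zeta^q = \varphi_\theta(x,|\nabla u|)^{-1}\,\varPhi(x,|\nabla u|)\,\zeta^q$ on the set where $|\nabla u|>0$; more efficiently, apply \eqref{eq2.1} in the form $\varphi_\theta(x,a)b^\theta \le \varepsilon^{-\theta}\varPhi(x,a)+b^\theta\varphi_\theta(x,\varepsilon b)$ with $a=|\nabla u|$ and a suitable $b$ to be chosen as $b = M_\pm(k,r)/(\sigma r)$ (the natural scale, matching the right side of \eqref{eq1.8}), getting $|\nabla u|^\theta \zeta^q \le$ [a term controlled by $\varepsilon^{-\theta}\varPhi(x,|\nabla u|)\zeta^q$, absorbable via \eqref{eq1.8}] $+$ [a term of the form $(M_\pm(k,r)/(\sigma r))^\theta \varphi_\theta(x,\varepsilon M_\pm(k,r)/(\sigma r))$ supported on $A^\pm_{k,r}$]. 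Applying \eqref{eq1.8} to the first group and then, on the right side of \eqref{eq1.8}, writing $\varPhi\big(x,\frac{(u-k)_\pm}{\sigma r}\big)\le \varPhi\big(x,\frac{M_\pm(k,r)}{\sigma r}\big)$ and using condition $(\varPhi)$ to compare the scale $M_\pm/(\sigma r)$ with $M_\pm/r$ (producing the $\sigma^{-\theta q/p}$ factor), one is reduced to estimating $\int_{A^\pm_{k,r}} \varphi_\theta\big(x,\tfrac{M_\pm(k,r)}{r}\big)\,dx$ and $\int_{A^\pm_{k,r}}\varPhi\big(x,\tfrac{M_\pm(k,r)}{r}\big)\,dx$. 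For the second, use $\varPhi = \varphi_\theta\cdot v^\theta$ again, so both reduce to integrals of $\varphi_\theta(x,M_\pm/r)$ over $A^\pm_{k,r}$.

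The remaining and genuinely computational step is to estimate $\int_{A^\pm_{k,r}}\varphi_\theta\big(x,\tfrac{M_\pm(k,r)}{r}\big)dx$. Write $\varphi_\theta(x,v) = \varPhi(x,v)^{1-\theta/p}\cdot\big(\varPhi(x,v)^{1/p}/v\big)^{\theta}$; by condition $(\varPhi)$ with the normalization $(\varPhi_0)$ the factor $\varPhi(x,v)^{1/p}/v$ is comparable (up to data constants) to a quantity that, when $v=M_\pm(k,r)/r\ge 1$, i.e. when $M_\pm(k,r)\ge r$, can be controlled, and one ends up essentially with $\int_{A^\pm_{k,r}}\varPhi(x,M_\pm/r)^{1-\theta/p}\,dx$ — no, more precisely one splits $\varphi_\theta = \varPhi^{a}\cdot\varPhi^{-b}$ with $a-b$ chosen so that $a$ pairs with the exponent $s$ and $b$ with $-t$. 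Then Hölder's inequality with exponents tuned to $s$ and $t$ yields $\int_{A^\pm_{k,r}}\varphi_\theta(x,M_\pm/r)dx \le r^n\,[\Lambda_{+,\varPhi}(x_0,r,M_\pm/r)]^{\alpha_1}[\Lambda_{-,\varPhi}(x_0,r,M_\pm/r)]^{\alpha_2}\,|A^\pm_{k,r}|^{1-\alpha_1/s-\alpha_2/t}\cdot(\text{normalizing powers of }r)$; matching exponents so that the $\Lambda$'s combine into $[\Lambda_\varPhi]^{\theta/p}$ and the measure power becomes $1-\theta/(tp)-\theta/(sp)$ pins down the bookkeeping. I expect the main obstacle to be exactly this exponent bookkeeping: choosing how to split the power of $\varPhi$ between the $L^s$ and $L^{-t}$ factors, and tracking the normalization by $r^n$ and $|B_r(x_0)|$, so that the final exponents come out as stated; the restriction $\theta\le p\frac{t}{t+1}$ (equivalently $\theta/(tp)\le 1-\theta/p$, or a nearby inequality) is precisely what is needed for the negative-exponent Hölder step and for $1-\theta/(tp)-\theta/(sp)$ to be a legitimate (non-negative) exponent together with the structural assumption $\frac{1}{tp}+\frac{1}{sp}<\frac{1}{n}$. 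The case $M_\pm(k,r)<r$, where $v<1$ and condition $(\varPhi)$ is used in the opposite direction, should be treated separately or absorbed by a trivial bound, and I would note it but not dwell on it.
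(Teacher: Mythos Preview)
Your plan has the right ingredients --- the Young-type inequality \eqref{eq2.1}, H\"older against the exponents $s$ and $-t$, and the De Giorgi inequality \eqref{eq1.8} --- but the order in which you assemble them is off, and the specific application of \eqref{eq2.1} you describe does not do what you claim. With your choice $a=|\nabla u|$, $b=M_\pm/(\sigma r)$, inequality \eqref{eq2.1} bounds $\varphi_\theta(x,|\nabla u|)\,(M_\pm/(\sigma r))^\theta$, \emph{not} $|\nabla u|^\theta$; there is no pointwise inequality of the shape $|\nabla u|^\theta\le \varepsilon^{-\theta}\varPhi(x,|\nabla u|)+(\text{lower order})$ coming out of \eqref{eq2.1}. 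And even if you force such a pointwise bound, integrating and then applying \eqref{eq1.8} lands you on $\int_{A^\pm_{k,r}}\varPhi\big(x,\tfrac{M_\pm}{\sigma r}\big)\,dx$, which carries the wrong power of $M_\pm/r$ and the wrong measure exponent; the $\Lambda_{-,\varPhi}$ factor never appears this way.

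The paper's proof runs in the opposite order: H\"older \emph{first}, then \eqref{eq2.1}. For the endpoint $\theta=p\,\frac{t}{t+1}$ one writes, with $v=M_\pm(k,r)/r$,
\[
\int_{A^\pm_{k,r}}|\nabla u|^{\theta}\zeta^q\,dx
\le\Big(\int_{A^\pm_{k,r}}|\nabla u|^{p}\,\varphi_p(x,v)\,\zeta^q\,dx\Big)^{\frac{t}{t+1}}
\Big(\int_{B_r(x_0)}\big[\varphi_p(x,v)\big]^{-t}\,dx\Big)^{\frac{1}{t+1}}.
\]
The second factor equals $v^{\theta}\,r^{n/(t+1)}\big[\Lambda_{-,\varPhi}(x_0,r,v)\big]^{\theta/p}$ after unpacking $\varphi_p(x,v)=\varPhi(x,v)/v^p$. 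For the first factor one applies \eqref{eq2.1} with $a=v$, $b=|\nabla u|$, $\varepsilon=1$ (the roles reversed from what you wrote) to get $|\nabla u|^p\varphi_p(x,v)\le \varPhi(x,v)+\varPhi(x,|\nabla u|)$; then \eqref{eq1.8} and condition $(\varPhi)$ reduce everything to $\gamma\sigma^{-q}\int_{A^\pm_{k,r}}\varPhi(x,v)\,dx$, and a single H\"older with exponent $s$ on this last integral produces $\big[\Lambda_{+,\varPhi}\big]^{\theta/p}$ together with the measure factor $|A^\pm_{k,r}|^{(s-1)t/(s(t+1))}=|A^\pm_{k,r}|^{1-\theta/(tp)-\theta/(sp)}$. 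The general $\theta\in[1,p\tfrac{t}{t+1}]$ follows from this endpoint by one more H\"older. Your proposed splitting $\varphi_\theta=\varPhi^a\varPhi^{-b}$ followed by a two-sided H\"older is therefore unnecessary: the $\Lambda_-$ and $\Lambda_+$ contributions arise from two separate, elementary H\"older steps as above, and the exponent bookkeeping you anticipate as the main obstacle is in fact automatic once the argument is set up this way.
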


\begin{proof}
We use the H\"{o}lder inequality and inequality \eqref{eq2.1} for the function $\varphi_{p}(x, \cdot)$ with \\$ a=\dfrac{M_{\pm}(k,r)}{r}$ , $b= | \nabla u|^{p}$ and $\varepsilon=1$
\begin{multline*}
\int\limits_{A^{\pm}_{k,r}}
|\nabla u|^{p\frac{t}{t+1}}\,\zeta^{q}(x)\, dx \leqslant \\ \leqslant \bigg(\int\limits_{A^{\pm}_{k,r}}
|\nabla u|^{p}\,\varphi_{p}\bigg(x, \frac{M_{\pm}(k,r)}{r}\bigg)\,\zeta^{q}(x)\, dx \bigg)^{\frac{t}{t+1}}
\bigg(\int\limits_{A^{\pm}_{k,r}}\bigg[\varphi_{p}\bigg(x,\frac{M_{\pm}(k,r)}{r}\bigg)\bigg]^{-t}\,dx\bigg)^{\frac{1}{t+1}} \leqslant \\ \leqslant
\gamma\bigg(\frac{M_{\pm}(k,r)}{r}\bigg)^{p\frac{t}{t+1}}\bigg(\int\limits_{A^{\pm}_{k,r}}\varPhi(x, |\nabla u|)\,\zeta^{q}(x)\,dx +
\int\limits_{A^{\pm}_{k,r}}\varPhi\big(x, \frac{M_{\pm}(k,r)}{r}\big)\,\zeta^{q}(x)\,dx\bigg)^{\frac{t}{t+1}}\times \\ \times
\bigg(\int\limits_{B_{r}(x_{0})}\big[\varPhi\big(x,\frac{M_{\pm}(k,r)}{r}\big) \big]^{-t}\,dx\bigg)^{\frac{1}{t+1}}
 \leqslant  \frac{\gamma}{\sigma^{q\frac{t}{t+1}}}\bigg(\frac{M_{\pm}(k,r)}{r}\bigg)^{p\frac{t}{t+1}}\times \\ \times\bigg(\int\limits_{B_{r}(x_{0})}\big[\varPhi\big(x,\frac{M_{\pm}(k,r)}{r}\big) \big]^{-t}\,dx\bigg)^{\frac{1}{t+1}}
\bigg(\int\limits_{B_{r}(x_{0})}\big[\varPhi\big(x,\frac{M_{\pm}(k,r)}{r}\big) \big]^{s}\,dx\bigg)^{\frac{t}{s(t+1)}}
|A^{\pm}_{k,r}|^{\frac{s-1}{s}\frac{t}{t+1}},
\end{multline*}
from which by the H\"{o}lder inequality the required  \eqref{eq2.2} follows. This proves Lemma \ref{lem2.2}.
\end{proof}
In what follows, we will use only inequalities \eqref{eq2.2}, which can be taken as the definition of the corresponding
De Giorgi $DG_{\varPhi}(\Omega)$ classes.

\subsection{ A Variant of Expansion of the Positivity Lemma}\label{subsect2.3}
The following  lemma will be used in the sequel. In the proof we closely follow to \cite[Chap.\,2]{LadUr}. Let
$M(r)\geqslant \sup\limits_{B_{r}(x_{0})}u, \,m(r)\leqslant \inf\limits_{B_{r}(x_{0})}u, \,\omega(r):=M(r)-m(r)$ and
set $v_{+}(x):= M(r)-u(x)$, $v_{-}(x):= u(x)- m(r)$.

\begin{lemma}\label{lem2.3}
Let $u\in DG_{\varPhi}(B_{R}(x_{0}))$ and  let conditions  $(\varPhi_{0})$, $(\varPhi)$, $(\varPhi^{\lambda}_{\Lambda,x_{0}})$
be fulfilled. Let  $\xi  \in(0,1)$ and
assume that with some $\alpha_{0}\in(0,1)$ there holds
\begin{equation}\label{eq2.3}
\left| \left\{ x\in B_{3r/4}(x_{0}):v_{\pm}(x)
\leqslant \xi\,\omega(r) \right\} \right|
\leqslant(1-\alpha_{0})\, |B_{3r/4}(x_{0})|.
\end{equation}
Then for any $\nu\in(0,1)$ there exists number $C_{\ast}\geqslant 1$ depending only on the known data, $\alpha_{0}$, $\xi$ and  $\nu$  such that either
\begin{equation}\label{eq2.4}
\omega(r) \leqslant \,\frac{r}{\lambda(r)}\, \exp\big(C_{\ast} [\Lambda_{\lambda}\big(x_{0}, r\big)]^{\bar{\beta}_{1}}\big),
\end{equation}
or
\begin{equation}\label{eq2.5}
|\{B_{3/4 r}(x_{0}) :v_{\pm}(x)\leqslant \omega(r)\,\lambda(r)\, \exp\big(-C_{\ast} [\Lambda_{\lambda}\big(x_{0}, r\big)]^{\bar{\beta}_{1}}\big)\}| \leqslant \nu |B_{3/4 r}(x_{0})|,
\end{equation}
here $\bar{\beta}_{1} $ is some fixed positive number depending only on the data.
\end{lemma}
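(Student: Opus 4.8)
The proof is a De Giorgi iteration over the level sets of $v_{\pm}$, run on a fixed ball and following the scheme of the classical ``second'' De Giorgi lemma in \cite[Chap.\,2]{LadUr}; the novelty is that the energy estimate \eqref{eq2.2} produces a usable ``log\,--\,type'' constant only when the truncation levels stay inside the range $[r,K\lambda(r)]$, and this restriction is precisely what imposes the dichotomy \eqref{eq2.4}/\eqref{eq2.5} and the shape of the perturbation $\lambda(r)\exp(\pm C_{\ast}[\Lambda_{\lambda}]^{\bar\beta_{1}})$. Fix once and for all $\theta\in\big(1,\,p\tfrac{t}{t+1}\big]$ with $\beta:=1-\theta\big(\tfrac{1}{tp}+\tfrac{1}{sp}\big)>0$ — possible since $p\tfrac{t}{t+1}>1$ (as $t>\tfrac{1}{p-1}$) and $\big(\tfrac{1}{tp}+\tfrac{1}{sp}\big)^{-1}>n\geqslant1$ by $(\varPhi^{\lambda}_{\Lambda,x_{0}})$ — and set $\kappa:=\tfrac{1}{\theta-1}>0$ and $\bar\beta_{1}:=\tfrac{1+\kappa}{p}=\tfrac{\theta}{p(\theta-1)}$, which is a data constant. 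It suffices to treat $v_{+}=M(r)-u$ (the case $v_{-}$ being symmetric, using $DG^{-}_{\varPhi}$), and we may take $M(r)=\esssup_{B_{r}(x_{0})}u$; if the set in \eqref{eq2.5} is null there is nothing to prove.

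\textbf{Dichotomy and admissible levels.} Let $C_{\ast}\geqslant1$ be a constant to be fixed, and put $\delta:=\omega(r)\,\lambda(r)\,\exp\big(-C_{\ast}[\Lambda_{\lambda}(x_{0},r)]^{\bar\beta_{1}}\big)$. If $\omega(r)\leqslant\tfrac{r}{\lambda(r)}\exp\big(C_{\ast}[\Lambda_{\lambda}(x_{0},r)]^{\bar\beta_{1}}\big)$ we are in alternative \eqref{eq2.4} and stop; otherwise $\delta>r$. Set $l_{0}:=\min\{\xi\,\omega(r),\,2M\lambda(r)\}$. Since $l_{0}\leqslant\xi\,\omega(r)$, hypothesis \eqref{eq2.3} gives $|B_{3r/4}(x_{0})\setminus\{v_{+}<l_{0}\}|\geqslant\alpha_{0}|B_{3r/4}(x_{0})|$; since $\lambda(r)\leqslant1$ and $\omega(r)\leqslant2M$ one checks $\delta<l_{0}$; and every $k$ with $\delta\leqslant k\leqslant l_{0}$ satisfies $r<k\leqslant 2M\lambda(r)$, so that $(\varPhi^{\lambda}_{\Lambda,x_{0}})$ with $K=2M$ yields $\Lambda_{\varPhi}\big(x_{0},r,\tfrac{k}{r}\big)\leqslant c_{1}(2M)\,\Lambda_{\lambda}(x_{0},r)$. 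Thus ``$\delta>r$'' is exactly what excludes \eqref{eq2.4}, and it is what makes \eqref{eq2.2} usable with the constant $\gamma[\Lambda_{\lambda}(x_{0},r)]^{\theta/p}$ at every level occurring below.

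\textbf{Iteration and conclusion.} Put $k_{j}:=l_{0}2^{-j}$, $A_{j}:=\{v_{+}<k_{j}\}$ (restricted to the working ball), $\mu_{j}:=|A_{j}|$. For $j$ with $k_{j}\geqslant\delta$ combine: (i) the De Giorgi--Poincar\'e Lemma \ref{lem2.1} with levels $k_{j+1}<k_{j}$, noting $k_{j}-k_{j+1}=k_{j+1}$ and $|B\setminus A_{j}|\geqslant\alpha_{0}|B|$; (ii) H\"older, $\int_{A_{j}\setminus A_{j+1}}|\nabla v_{+}|\leqslant|A_{j}\setminus A_{j+1}|^{1-1/\theta}\big(\int_{A_{j}}|\nabla v_{+}|^{\theta}\big)^{1/\theta}$; (iii) the energy estimate \eqref{eq2.2} for the truncation $(u-(M(r)-k_{j}))_{+}$, whose supremum equals $k_{j}$, so the factor $[\Lambda_{\varPhi}(x_{0},r,k_{j}/r)]^{\theta/p}$ there is $\leqslant\gamma[\Lambda_{\lambda}(x_{0},r)]^{\theta/p}$. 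After the $k_{j}$'s cancel and the (thin) annuli between the concentric balls are absorbed into the constants exactly as in \cite[Chap.\,2]{LadUr}, one obtains the scale invariant recursion
\begin{equation*}
\Big(\frac{\mu_{j+1}}{|B|}\Big)^{1+\kappa}\leqslant \gamma\,\alpha_{0}^{-\gamma}\,[\Lambda_{\lambda}(x_{0},r)]^{\bar\beta_{1}}\,\frac{\mu_{j}-\mu_{j+1}}{|B|},\qquad k_{j}\geqslant\delta .
\end{equation*}
Summing over $j=0,\dots,s-1$, using $\sum_{j}(\mu_{j}-\mu_{j+1})\leqslant|B|$ and monotonicity of $\mu_{j}$, gives $\mu_{s}\leqslant|B|\,\big(\gamma\alpha_{0}^{-\gamma}[\Lambda_{\lambda}(x_{0},r)]^{\bar\beta_{1}}/s\big)^{1/(1+\kappa)}$, hence $\mu_{s}\leqslant\nu|B|$ once $s\geqslant s(\nu):=\gamma\alpha_{0}^{-\gamma}\nu^{-(1+\kappa)}[\Lambda_{\lambda}(x_{0},r)]^{\bar\beta_{1}}$. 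Now $l_{0}/\delta\geqslant\xi\exp\big(C_{\ast}[\Lambda_{\lambda}(x_{0},r)]^{\bar\beta_{1}}\big)$, so $\log_{2}(l_{0}/\delta)\geqslant\big(C_{\ast}/\ln2-|\log_{2}\xi|\big)[\Lambda_{\lambda}(x_{0},r)]^{\bar\beta_{1}}$ (using $[\Lambda_{\lambda}]^{\bar\beta_{1}}\geqslant1$); choosing $C_{\ast}:=\ln2\,\big(|\log_{2}\xi|+\gamma\alpha_{0}^{-\gamma}\nu^{-(1+\kappa)}\big)$, which depends only on the data, $\alpha_{0}$, $\xi$, $\nu$, we get $\log_{2}(l_{0}/\delta)\geqslant s(\nu)$. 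Running the iteration with $s:=\lceil s(\nu)\rceil$ levels, all $k_{0},\dots,k_{s}$ lie in $(\delta,l_{0}]\subset(r,2M\lambda(r)]$, the recursion applies for $j=0,\dots,s-1$, $\mu_{s}\leqslant\nu|B|$, and $k_{s}=l_{0}2^{-s}\geqslant\delta$ so $\{v_{+}<\delta\}\subseteq A_{s}$; thus $|\{v_{+}<\delta\}|\leqslant\mu_{s}\leqslant\nu|B|$, which after rescaling $\nu$ by the fixed dimensional constants coming from the concentric balls is \eqref{eq2.5}. Taking $\bar\beta_{1}=\tfrac{\theta}{p(\theta-1)}$ and $C_{\ast}$ as above, $C_{\ast}=C_{\ast}($data$,\alpha_{0},\xi,\nu)$, this proves Lemma \ref{lem2.3}.

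\textbf{Main obstacle.} The De Giorgi iteration itself is standard; the real work is the bookkeeping of $\Lambda_{\lambda}$. The energy estimate only yields the usable constant $\gamma[\Lambda_{\lambda}(x_{0},r)]^{\theta/p}$ inside $[r,K\lambda(r)]$: below level $r$ nothing can be said (forcing the ``too small oscillation'' alternative \eqref{eq2.4}), and roughly $[\Lambda_{\lambda}]^{\bar\beta_{1}}$ dyadic levels are needed, which forces the target level in \eqref{eq2.5} to be pushed down by the factor $\exp(-C_{\ast}[\Lambda_{\lambda}]^{\bar\beta_{1}})$. The delicate point is to identify $\bar\beta_{1}$ with the exact power of $\Lambda_{\lambda}$ produced by the iteration — so that it is a genuine data constant — and to exploit $\Lambda_{\lambda}\geqslant1$, which together guarantee that $C_{\ast}$ depends only on the data, $\alpha_{0}$, $\xi$, $\nu$, and not on $r$ or $\lambda(r)$; this is exactly what dictates the precise form of the statement.
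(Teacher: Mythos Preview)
Your proof is correct and follows essentially the same route as the paper: dyadic levels for $v_{+}$, the De~Giorgi--Poincar\'e inequality (Lemma~\ref{lem2.1}) combined via H\"older with the energy estimate \eqref{eq2.2} at $\theta=p\tfrac{t}{t+1}$, summation of the telescoping recursion, and the choice of the number of steps proportional to $[\Lambda_{\lambda}(x_{0},r)]^{\bar\beta_{1}}$ with $\bar\beta_{1}=\tfrac{\theta}{p(\theta-1)}=\tfrac{t}{(p-1)t-1}$. The only cosmetic differences are that the paper takes the starting level $\tfrac{\lambda(r)}{2^{[\log 1/\xi]+1}}\omega(r)$ (and separately disposes of the case $M_{+}(k_{j},3r/4)<\tfrac{\lambda(r)}{2^{j+1}}\omega(r)$), whereas you take $l_{0}=\min\{\xi\omega(r),2M\lambda(r)\}$ and avoid that case by fixing $M(r)=\esssup_{B_{r}}u$; both lead to the same recursion and the same constants.
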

\begin{proof}
We provide the proof of \eqref{eq2.5} for $v_{+}$,
while the proof for $v_{-}$ is completely similar.
We set $k_{j}:= M(r)-\dfrac{\lambda(r)}{2^{j}}\omega(r)$,
$j=[\log 1/\xi]+1,2, \ldots,j_{\ast}$, where $j_{\ast}$ to be chosen. We will assume  for all $j \in[[\log 1/\xi]+1, j_{\ast}]$ that
$M_{+}(k_{j}, 3/4 r) \geqslant \dfrac{\lambda(r)}{2^{j+1}}\,\omega(r)$, because if for some $j$ this inequality is violated then the required \eqref{eq2.5} with $C_{\ast}\geqslant j_{\ast}+1$ is evident. If \eqref{eq2.4} is violated, then $M_{+}(k_{j},  r)\geqslant M_{+}(k_{j}, 3/4 r) \geqslant r$ and  since
$2^{-j-1}\,\omega(r)\,\lambda(r)\leqslant M_{+}(k_{j},r)\leqslant 2^{-j}\,\omega(r)\,\lambda(r)\leqslant \\ \leqslant 2M(R)\lambda(r)$,  by  $(\varPhi^{\lambda}_{\Lambda, x_{0}})$ we obtain that
$$
\Lambda_{\varPhi}\big(x_{0}, r,\frac{M_{+}(k_{j},r)}{r}\big)\leqslant 2^{q} \Lambda_{\varPhi}\big(x_{0}, r, \frac{\lambda(r)}{r 2^{j}}\omega(r)\big)\leqslant \gamma \Lambda_{\lambda}(x_{0}, r).
$$
Therefore , if \eqref{eq2.4} is violated, inequality \eqref{eq2.2} with $\theta=p\frac{t}{t+1}$
can be rewritten as
\begin{equation*}
\int\limits_{A^{+}_{k_{j},r}}
|\nabla u|^{\theta}\,\zeta^{\,q}\,dx \leqslant\gamma\,\bigg(\frac{\lambda(r)}{2^{j}r}\omega(r)\bigg)^{\theta}\,r^{n}\,
[\Lambda_{\lambda}\big(x_{0}, r\big)]^{\frac{\theta}{p}}\left( \frac{|A^{+}_{k_{j},r}|}{|B_{r}(x_{0})|} \right)^{\frac{1}{\kappa_{1}}}
\end{equation*}
where $\dfrac{1}{\kappa_{1}} =1- \dfrac{\theta}{sp}-\dfrac{\theta}{tp}$,\quad $\theta= p\frac{t}{t+1}$ and $\zeta\in C_{0}^{\infty}(B_{r}(x_{0}))$, $0\leqslant\zeta\leqslant1$, $\zeta=1$ in $B_{3r/4}(x_{0})$, $|\nabla\zeta|\leqslant 4/r$.
From this by Lemma \ref{lem2.1} we obtain
\begin{multline*}
\frac{\lambda(r)}{2^{j+1}}\omega(r)|A^{+}_{k_{j},3/4 r}|\leqslant  \frac{\gamma}{\alpha_{0}}\,r\int\limits_{A^{+}_{k_{j},r}\setminus A^{+}_{k_{j+1},r}} |\nabla u|\,\zeta^{\,q}\,dx\leqslant\\ \leqslant \frac{\gamma}{\alpha_{0}}\,r\bigg(\int\limits_{A^{+}_{k_{j},r}} |\nabla u|^{\theta}\,\zeta^{\,q}\,dx\bigg)^{\frac{1}{\theta}} |A^{+}_{k_{j},r}\setminus A^{+}_{k_{j+1},r}|^{1-\frac{1}{\theta}}\leqslant
\\ \leqslant \frac{\gamma}{\alpha_{0}}\,\frac{\lambda(r)}{\,2^{j}}\omega(r)\,[\Lambda_{\lambda}\big(x_{0}, r\big)]^{\frac{1}{p}}
\left( \dfrac{|A^{+}_{k_{j},r}\setminus A^{+}_{k_{j+1},r}|}
{|B_{r}(x_{0})|} \right)^{1-\frac{1}{\theta}}\left( \frac{|A^{+}_{k_{j},r}|}{|B_{r}(x_{0})|} \right)^{\frac{1}{\theta\kappa_{1}}}|B_{r}(x_{0})|,
\end{multline*}
raising the left and right hand-sides to the power $\dfrac{\theta}{\theta-1}$ and summing up the resulting inequalities in $j$, $j=[\log 1/\xi]+1,2, \ldots,j_{\ast}$ ,we conclude that
$$
(j_{\ast} -[\log1/ \xi]-1)\bigg(\frac{|A^{+}_{k_{j_{\ast}},3/4 r}|}{B_{3/4 r}(x_{0})|}\bigg)^{\frac{\theta}{\theta-1}}\leqslant \gamma \alpha_{0}^{-\frac{\theta}{\theta-1}}\,[\Lambda_{\lambda}\big(x_{0}, r\big)]^{\frac{\theta}{p(\theta-1)}}.
$$
Choosing $j_{\ast}$ by the condition
$$j_{\ast} -[\log1/ \xi]-1 \geqslant \gamma\,\nu^{-\frac{\theta}{\theta-1}}\,\alpha_{0}^{-\frac{\theta}{\theta-1} }
[\Lambda_{\lambda}\big(x_{0}, r\big)]^{\frac{\theta}{p(\theta-1)}},\quad \theta=p\frac{t}{t+1},  $$
we obtain inequality \eqref{eq2.5}, which proves Lemma \ref{lem2.3} with
 $$C_{\ast} \geqslant 1+\log1/\xi +\gamma\,\alpha_{0}^{-\frac{pt}{(p-1)t-1}}\,\nu^{-\frac{pt}{(p-1)t-1}}\quad
\text{and}\quad \bar{\beta}_{1}= \frac{t}{(p-1)t-1}.$$
\end{proof}

\subsection{De Giorgi Type Lemma}\label{subsec2.4}
The following theorem is the De Giorgi type Lemma, the proof is almost standard (see e.g. \cite{LadUr}).
\begin{lemma}\label{lem2.4} Let $u \in DG_{\varPhi}(B_{R}(x_{0}))$ and let conditions  $(\varPhi_{0}), (\varPhi)$, $(\varPhi^{\lambda}_{\Lambda, x_{0}})$  be fulfilled. Fix $\xi, \eta \in (0,1)$,  there exists number  $\nu_{1} \in(0, 1)$ depending only on the data and  $\eta,$  such that if
\begin{equation}\label{eq2.6}
|\{x\in B_{r}(x_{0}) : v_{\pm}(x) \leqslant \xi\,\omega(r) \}|\leqslant \nu_{1}\,[\Lambda_{\lambda}(x_{0}, r)]^{-\bar{\beta}_{2}}|B_{r}(x_{0})|,
\end{equation}
then either
\begin{equation}\label{eq2.7}
\eta\,(1-\eta)\,\xi\,\omega(r)\leqslant   \frac{r}{\lambda(r)},
\end{equation}
or
\begin{equation}\label{eq2.8}
v_{\pm}(x) \geqslant (1-\eta)^{2}\,\xi\,\lambda(r)\,\omega(r),\quad x \in B_{\frac{r}{2}}(x_{0}).
\end{equation}
Here $\bar{\beta}_{2}$ is some fixed positive number, depending only on the data.
\end{lemma}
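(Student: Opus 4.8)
The plan is to run the classical De Giorgi iteration on the super-level sets of $v_{\pm}$, starting from the assumed smallness of the bad set at scale $r$ and shrinking to the ball $B_{r/2}(x_0)$. I would work only with $v_{+}$ (the case of $v_{-}$ being identical). Introduce the truncation levels $k_{j} := (1-\eta^{\,j+1})\,\xi\,\lambda(r)\,\omega(r)$ for $j=0,1,2,\dots$, which increase from $(1-\eta)\xi\lambda(r)\omega(r)$ up to $\xi\lambda(r)\omega(r)$ (note that at $j=0$ the hypothesis \eqref{eq2.6} gives control of $|\{v_{+} < k_{0}\}|$ modulo the $\lambda(r)$-rescaling, using $\lambda(r)\le 1$), and the nested radii $r_{j} := \tfrac{r}{2}(1+2^{-j})$, with the usual cutoffs $\zeta_{j}\in C_0^\infty(B_{r_j}(x_0))$, $\zeta_j \equiv 1$ on $B_{r_{j+1}}(x_0)$, $|\nabla\zeta_j|\le \gamma\,2^{j}/r$. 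Writing $A_j := B_{r_j}(x_0)\cap\{v_{+} < k_j\}$ and $y_j := |A_j|/|B_{r_j}(x_0)|$, the goal is the recursive inequality $y_{j+1}\le \gamma\,b^{j}\,[\Lambda_\lambda(x_0,r)]^{\mu}\,y_j^{\,1+\delta}$ for suitable constants $b>1$, $\delta>0$, $\mu>0$ depending only on the data; then the fast-geometric-convergence lemma (Lemma 5.6 of \cite[Chap.\,2]{LadUr}) yields $y_j\to 0$ provided $y_0$ is below a threshold of the form $\gamma^{-1}[\Lambda_\lambda(x_0,r)]^{-\bar\beta_2}$, which is exactly hypothesis \eqref{eq2.6} with the right choice of $\bar\beta_2$ and $\nu_1$.

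To obtain the recursion I would apply the Sobolev embedding to $(v_{+}-k_j)_{-}\zeta_j$ — equivalently to $w_j := (k_j - v_{+})_{+}\zeta_j = (u - (M(r)-k_j))_{+}\zeta_j$ — combined with the energy estimate \eqref{eq2.2} of Lemma \ref{lem2.2} applied at level $M(r)-k_j$ and radius $r_j$ with $\theta = p\tfrac{t}{t+1} > 1$ (this is where the constraint $\tfrac{1}{tp}+\tfrac{1}{sp} < \tfrac1n$ enters, guaranteeing $\tfrac{1}{\kappa_1} = 1 - \tfrac{\theta}{sp} - \tfrac{\theta}{tp} > 0$ and that $\theta < n$ so the Sobolev exponent $\theta^* = \tfrac{n\theta}{n-\theta}$ is meaningful). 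The quantity $M_{+}(M(r)-k_j,\, r_j) = \sup_{B_{r_j}}(u - (M(r)-k_j))_{+}$ is bounded above by $k_{j+1}-k_j + \text{(the part of }v_+\text{ below }k_{j+1}) \le \gamma\,\eta^{j}\,\xi\,\lambda(r)\,\omega(r)$ on the relevant set, and is bounded below — on the set where we lose a level, i.e. on $A_j\setminus A_{j+1}$, we have $k_{j+1}-v_{+}\ge k_{j+1}-k_j = \eta^{j}(1-\eta)\xi\lambda(r)\omega(r)$, which combined with the alternative being false, $\eta(1-\eta)\xi\omega(r) > r/\lambda(r)$, forces $M_{+}(M(r)-k_j, r_j)\ge r$. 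That lower bound is precisely what lets me invoke condition $(\varPhi^{\lambda}_{\Lambda,x_0})$: since $r \le M_{+} \le 2M(R)\lambda(r) \le K\lambda(r)$ with $K$ depending on the data, we get $\Lambda_{\varPhi}(x_0, r_j, M_{+}/r_j)\le \gamma\,\Lambda_\lambda(x_0,r)$, so the factor $[\Lambda_\varPhi(\cdot)]^{\theta/p}$ in \eqref{eq2.2} turns into $[\Lambda_\lambda(x_0,r)]^{\theta/p}$, a fixed power of $\Lambda_\lambda$.

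Assembling: $\int_{A_j}|\nabla v_+|^{\theta}\zeta_j^q\,dx \le \gamma\,b^{j}\,(\eta^j\xi\lambda(r)\omega(r)/r)^{\theta}\,r_j^{n}\,[\Lambda_\lambda(x_0,r)]^{\theta/p}\,y_j^{1/\kappa_1}$, the gradient of $\zeta_j$ contributes the same order of term, and Sobolev plus Hölder on the super-level difference set gives, after dividing through by $(k_{j+1}-k_j)^{\theta}|B_{r_{j+1}}|$, an estimate of the form $y_{j+1}\le \gamma\,b^{j}\,[\Lambda_\lambda(x_0,r)]^{\theta/p}\,y_j^{1+\delta}$ with $\delta = \tfrac{1}{\kappa_1} + \tfrac{\theta}{n} - 1 > 0$ (the strict positivity is again the dimensional condition). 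Then fast geometric convergence needs $y_0 \le \gamma^{-1}\,b^{-1/\delta^2}\,[\Lambda_\lambda(x_0,r)]^{-\theta/(p\delta)}$; taking $\bar\beta_2 := \theta/(p\delta) = \tfrac{pt}{(p-1)t-1}\cdot\text{(a data-dependent factor)}$ and $\nu_1$ absorbing all the data constants, hypothesis \eqref{eq2.6} is exactly this threshold, so $y_j\to 0$, which means $v_+ \ge k_\infty = \xi\lambda(r)\omega(r)$ on $B_{r/2}(x_0)$; replacing $k_\infty$ by the slightly smaller $(1-\eta)^2\xi\lambda(r)\omega(r)$ gives \eqref{eq2.8} with room to spare. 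The main obstacle — and the only place requiring genuine care rather than bookkeeping — is the two-sided control of $M_{+}(M(r)-k_j, r_j)$ that is needed to legitimately replace $\Lambda_\varPhi$ by $\Lambda_\lambda$: the lower bound $M_{+}\ge r$ must be extracted from the failure of alternative \eqref{eq2.7} uniformly in $j$, and one must check that it holds on precisely the set that carries the energy in the Lemma \ref{lem2.1}/Sobolev step, which is why the levels $k_j$ are spaced by the geometric factor $\eta^j$ rather than a fixed fraction.
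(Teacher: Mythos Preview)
Your overall strategy --- run a De Giorgi iteration using the energy inequality \eqref{eq2.2} and the Sobolev embedding, then invoke the fast-convergence lemma --- is exactly right, and is what the paper does. But there are two concrete defects in the execution.

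\medskip
\textbf{(1) The level sequence is oriented backwards.} With $k_j=(1-\eta^{\,j+1})\xi\lambda(r)\omega(r)$ \emph{increasing} and $A_j=B_{r_j}(x_0)\cap\{v_+<k_j\}$, the sets $A_j$ \emph{grow} with $j$, so there is no recursion $y_{j+1}\le\gamma b^{j}[\Lambda_\lambda]^{\mu}y_j^{1+\delta}$: on $A_{j+1}$ one only knows $v_+<k_{j+1}$, hence $(k_j-v_+)_+$ may well vanish there (indeed $k_j-k_{j+1}<0$), and the Sobolev step produces no useful lower bound for $|A_{j+1}|$ in terms of the energy over $A_j$. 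The paper takes levels for $v_+$ that \emph{decrease}: in its notation $k_j$ are levels for $u$, and the corresponding thresholds for $v_+$ are
\[
\ell_j:=M(r)-k_j=\eta(2-\eta)\xi\lambda(r)\omega(r)+\frac{(1-\eta)^2}{2^{j}}\xi\lambda(r)\omega(r),
\]
which drop from $\xi\lambda(r)\omega(r)$ to $\eta(2-\eta)\xi\lambda(r)\omega(r)$ with $2^{-j}$ (not $\eta^{j}$) spacing. The paper then applies \eqref{eq2.2} with $\theta=1$ (not $\theta=p\,t/(t+1)$), obtaining $y_{j+1}\le\gamma\,2^{j\gamma}\eta^{-q/p}(1-\eta)^{-2-q/p}[\Lambda_\lambda(x_0,r)]^{1/p}y_j^{1+\kappa}$ with $\kappa=\tfrac1n-\tfrac1{sp}-\tfrac1{tp}$, which yields $\bar\beta_2=1/(p\kappa)$.

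\medskip
\textbf{(2) The lower bound $M_+\ge r$ is not obtained from level gaps.} Your justification extracts $M_+\ge k_{j+1}-k_j=\eta^{\,j+1}(1-\eta)\xi\lambda(r)\omega(r)$, but this shrinks to $0$ with $j$ and cannot be compared with the fixed bound $r/\lambda(r)$ coming from the negation of \eqref{eq2.7}. The paper's argument is different and cleaner: one simply assumes $M_+(k_\infty,r/2)\ge\eta(1-\eta)\xi\lambda(r)\omega(r)$ (otherwise \eqref{eq2.8} already holds), and then, since $k_j\le k_\infty$ and $r_j\ge r/2$, one has $M_+(k_j,r_j)\ge M_+(k_\infty,r/2)\ge\eta(1-\eta)\xi\lambda(r)\omega(r)$ \emph{uniformly in $j$}; the negation of \eqref{eq2.7} then gives $M_+(k_j,r_j)\ge r$ for every $j$. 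So the lower bound comes from the assumed failure of the conclusion, not from the spacing of levels --- your closing remark that geometric $\eta^{j}$-spacing is forced by this issue is a misconception; the paper's $2^{-j}$ spacing works perfectly well.
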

\begin{proof}
For $j=0,1,2,\ldots$ we set $r_{j}:=\dfrac{r}{2}(1+2^{-j}),\,\, \bar{r}_{j}=\dfrac{r_{j}+r_{j+1}}{2},\,\,\\
k_{j}:= M(r)- \eta\,(2-\eta)\xi\,\lambda(r)\,\omega(r)
-\dfrac{(1-\eta)^{2}}{2^{j}}\xi\,\lambda(r)\,\omega(r)$, and let $\zeta_{j} \in C^{\infty}_{0}(B_{\bar{r}_{j}}(x_{0})),\,\,\\ 0 \leqslant \zeta_{j} \leqslant 1,\,\, \zeta_{j} =1$ in $B_{r_{j+1}}(x_{0}),$ and $| \nabla \zeta_{j} | \leqslant \gamma \dfrac{2^{j}}{r}.$ We assume that\\
$M_{+}(k_{\infty},r/2)\geqslant \eta (1-\eta)\,\xi\,\lambda(r)\,\omega(r)$,
because in the opposite case, the required \eqref{eq2.8} is evident.
If \eqref{eq2.7} is violated then $M_{+}(k_{\infty},r/2)\geqslant  r$.
In addition, since $\eta (1-\eta) \xi\,\lambda(r)\,\omega(r) \leqslant M_{+}(k_{j},r)\leqslant\\ \leqslant \xi\,\lambda(r)\,\omega(r)$
for $j=0,1,2,\ldots$, then by $(\varPhi^{\lambda}_{\Lambda, x_{0}})$
we obtain that
$$
\Lambda\big(x_{0}, r_{j},\frac{M_{+}(k_{j},r)}{r_{j}}\big)\leqslant \eta^{-q}\,(1-\eta)^{-q} \Lambda\big(x_{0}, r_{j}, \xi\,\frac{\lambda(r)}{r_{j}}\,\omega(r)\big) \leqslant \gamma \eta^{-q}\,(1-\eta)^{-q} \Lambda_{\lambda}(x_{0},r ).
$$
Therefore inequality \eqref{eq2.2} with $\theta=1$ can be rewritten as
$$
\int\limits_{A^{+}_{k_{j},r_{j}}}
|\nabla u|\,\zeta^{q}_{j}\,dx\leqslant \gamma\,2^{j\gamma}\,\eta^{-\frac{q}{p}}\,(1-\eta)^{-\frac{q}{p}}\,
\xi\,\lambda(r)\,\omega(r) r^{n}[\Lambda_{\lambda}(x_{0},r)]^{\frac{1}{p}}\,\left(\frac{|A^{+}_{k_{j},r_{j}}|}{|B_{r}(x_{0})|}\right)^{\frac{1}{\kappa_{2}}},
\frac{1}{\kappa_{2}}=1-\frac{1}{sp}-\frac{1}{tp}.
$$

From this, by the Sobolev embedding theorem  we obtain
$$
y_{j+1}=\frac{|A^{+}_{k_{j+1},r_{j+1}}|}{|B_{r}(x_{0})|} \leqslant \gamma\,2^{j\gamma}\,\eta^{-\frac{q}{p}}(1-\eta)^{-2-\frac{q}{p}}\,
[\Lambda_{\lambda}(x_{0}, r)]^{\frac{1}{p}} y_{j}^{1+\kappa}, \,\,j=0,1,2,... ,
$$
where $\kappa= \dfrac{1}{\kappa_{2}} - 1 +\dfrac{1}{n} =\dfrac{1}{n}-\dfrac{1}{sp}-\dfrac{1}{tp} >  0$. Choosing $\nu_{1}$, $\bar{\beta}_{2}$ from the condition
$$\nu_{1}=\gamma^{-1}\,\eta^{\frac{q}{p\kappa}}\,(1-\eta)^{\frac{2+\frac{q}{p}}{\kappa}}, \quad \bar{\beta}_{2}=\frac{1}{p\kappa},$$
and iterating the previous inequality  we arrive at the required \eqref{eq2.8},
which completes the proof of the lemma.
\end{proof}

.

\section{  Continuity and Harnack's Type Inequality , Proof of \\Theorems \ref{th1.1}--\ref{th1.3} and \ref{th1.5}}\label{Sec3}
\subsection {Continuity }\label{subsect3.1}
 Let $r, \rho$ be arbitrary such that $0 < r < \rho < R$, where $R$ is small enough. We assume that the following two alternative
cases are possible:
$$
\left|\left\{x\in B_{\frac{3}{4}r}(x_{0}):
u(x)\geqslant M(r) -\frac{1}{2}\,\omega(r) \right\} \right|
\leqslant \frac{1}{2}\,|B_{\frac{3}{4}r}(x_{0})|
$$
or
$$
\left|\left\{x\in B_{\frac{3}{4}r}(x_{0}):
u(x)\leqslant m(r) +\frac{1}{2}\,\omega(r)\right\} \right|
\leqslant \frac{1}{2}\,|B_{\frac{3}{4}r}(x_{0})|.
$$
Assume, for example, the first one. Then  Lemmas \ref{lem2.3}, \ref{lem2.4},  the choice of constant $C_{\ast}$ in Lemma
\ref{lem2.3} and  the choice of $\nu$ in Lemma \ref{lem2.4} ensure the existence of $\bar{\beta}_{3}= \bar{\beta}_{1}+ \bar{\beta}_{2}
\dfrac{pt}{(p-1)t- 1}$ such that
\begin{equation*}
\omega\big(\frac{r}{2}\big) \leqslant \bigg(1- \frac{\lambda(r)}{4}\exp\big(- \gamma [\Lambda_{\lambda}(x_{0},r)]^{\bar{\beta}_3}\bigg)\,\omega(r) +
\gamma\,\frac{r}{\lambda(r)}\,\exp\big(\gamma[\Lambda_{\lambda}(x_{0},r)]^{\bar{\beta}_3}\big),
\end{equation*}
 where  $\bar{\beta}_{1}$, $\bar{\beta}_{2}$  were defined in  Lemmas \ref{lem2.3},  \ref{lem2.4} .

Iterating this inequality, we obtain
\begin{equation*}
\omega(r) \leqslant 2\,M(R)\,\exp\bigg(- \gamma \int\limits_{r}^{2\rho}\lambda(s)\,\exp\big(-\gamma [\Lambda_{\lambda}(x_{0},s)]^{\bar{\beta}_{3}}\big)\,\frac{ds}{s}\bigg) +  \int\limits_{\frac{r}{2}}^{\rho}\exp\big(\gamma [\Lambda_{\lambda}(x_{0},s)]^{\bar{\beta}_{3}}\big)\,\frac{ds}{\lambda(s)},
\end{equation*}
from which the required continuity follows.
This completes the proof of Theorem \ref{th1.1}.

\subsection{ Proof of Theorem \ref{th1.5}}\label{subsect3.2}

Let $B_{r}(y) \subset B_{\rho}(x_{0}) \subset B_{8\rho}(x_{0}) \subset \Omega$ and let the following inequality holds
\begin{equation}\label{eq3.1}
|\{\, B_{r}(y) : u \geqslant N \,\}| \geqslant \alpha |B_{r}(y)|,
\end{equation}
for some $0<N<M$, $\alpha \in(0,1)$.

Let $\bar{r} < r$  be some number which we will fix later, we will assume that $ \sup\limits_{B_{r}(y)}(u- N\lambda(\bar{r}))_{-}\geqslant\\ \geqslant \frac{N}{2}\lambda(\bar{r})$, because in the opposite case the required inequality \eqref{eq1.20} is obvious. If $N \geqslant r$, then by  $(\varPhi^{\lambda}_{\Lambda})$, using the fact that $\lambda(\bar{r})\leqslant \lambda(r)$ if $\bar{r}\leqslant r$, we have with any $0< \bar{r} \leqslant r$
$$
\Lambda_{\varPhi}\big(y,r,\frac{M_{-}(N\lambda(\bar{r}), r)}{r}\big)\leqslant 2^{q}\Lambda_{\varPhi}\big(y,r,\frac{\lambda(\bar{r})}{r} N\big)\leqslant 2^{q}\Lambda_{\lambda}(y,r) \leqslant 2^{q}\Lambda_{\lambda}(r),
$$
apply inequality \eqref{eq2.2} with $\theta=1$ for $(u- \lambda(\bar{r})\,N)_{-}$ over the pair of balls $B_{r}(y)$ and $B_{2r}(y)$ and with arbitrary $0 <\bar{r} \leqslant r$ to obtain
$$
\int\limits_{B_{r}(y)} |\nabla (u- \lambda(\bar{r})\,N)_{-}|\,dx \leqslant \gamma \,[\Lambda_{\lambda}(r)]^{\frac{1}{p}}\,\lambda(\bar{r})\,N \,r^{n-1}.
$$
The local clustering Lemma \ref{lem1.1} with $k=\lambda(\bar{r})\,N$, $\nu=\frac{1}{4}$, $\xi=\frac{1}{2}$, $\mathcal{K}=\gamma[\Lambda_{\lambda}(r)]^{\frac{1}{p}}$  implies the existence of a point $\bar{x}\in B_{r}(y)$ and $\varepsilon \in(0,1)$ depending only on the data such that
\begin{equation*}
|\{B_{\bar{r}}(\bar{x}) : u >\frac{\lambda(\bar{r})}{2} N \}| \geqslant \frac{1}{4} |B_{\bar{r}}(\bar{x})|,\quad \bar{r}=\frac{\varepsilon\alpha^{2}}{8\gamma}\,\frac{r}{[\Lambda_{\lambda}(r)]^{\frac{1}{p}}}.
\end{equation*}
Set $\varepsilon_{0}:=\dfrac{\varepsilon}{8\gamma}$ , then the previous inequality can be rewritten as
\begin{equation}\label{eq3.2}
|\{B_{\bar{r}}(\bar{x}) : u >\frac{\lambda(\bar{r})}{2} N \}| \geqslant \frac{1}{4} |B_{\bar{r}}(\bar{x})|.
\end{equation}
From this by Lemmas \ref{lem2.3} and \ref{lem2.4} we obtain
$$
u(x)\geqslant \frac{\lambda(\bar{r})}{2}\,N \exp\big(-\gamma [\Lambda_{\lambda}(2\bar{r}))]^{\bar{\beta}_{3}}\big), \quad x \in B_{2\bar{r}}(\bar{x}),
\quad \bar{\beta}_{3}=\bar{\beta}_{1}+ \bar{\beta}_{2}\frac{pt}{(p-1)t-1},
$$
provided that
$$N \geqslant \frac{2\bar{r}}{\lambda(\bar{r})}\,\exp\big( \gamma [\Lambda_{\lambda}(2\bar{r})]^{\bar{\beta}_{3}}\big),$$
where  $\bar{\beta}_{1}$,  $\bar{\beta}_{2}$ are the constants defined in Lemmas \ref{lem2.3}, \ref{lem2.4}.

Repeating this procedure $j$-times  we obtain
$$ u(x)\geqslant 2^{-j} \,\lambda(\bar{r})\,N\, \exp\big(-\gamma \int\limits_{2\bar{r}}^{2^{j+1}\bar{r}}[\Lambda_{\lambda}(s)]^{\bar{\beta}_{3}}\frac{ds}{s}\big),\,\, x \in B_{2^{j}\bar{r}}(\bar{x}),$$
provided that
$$N \geqslant \frac{2^{j}\bar{r}}{\lambda(\bar{r})} \, \exp\big(\gamma \int\limits_{2\bar{r}}^{2^{j+1}\bar{r}}[\Lambda_{\lambda}(s)]^{\bar{\beta}_{3}}\frac{ds}{s}\big).$$
Choosing $j$ from the condition $2^{j}\bar{r}=\rho$ and using condition $(\lambda)$, from the previous we obtain
\begin{multline*}
u(x)\geqslant \gamma^{-1}\,\frac{\lambda(\bar{r})}{\Lambda_{\lambda}(r)}\,N\,\alpha^{2} \frac{r}{\rho}\exp\big(-\gamma \int\limits_{2\bar{r}}^{2\rho}[\Lambda_{\lambda}(s)]^{\bar{\beta}_{3}}\frac{ds}{s}\big)\geqslant \\ \geqslant
\gamma^{-1}\,\frac{\lambda(\rho)}{\Lambda_{\lambda}(\rho)}\,N\,\alpha^{\tau_{1}} \bigg(\frac{r}{\rho}\bigg)^{\tau_{2}}\exp\big(-\gamma \int\limits_{2\bar{r}}^{2\rho}[\Lambda_{\lambda}(s)]^{\bar{\beta}_{3}}\frac{ds}{s}\big)\geqslant\\
\geqslant\gamma^{-1}\,\lambda(\rho)\,N\,\alpha^{\tau_{1}} \bigg(\frac{r}{\rho}\bigg)^{\tau_{2}}\exp\big(-\gamma \int\limits_{2\bar{r}}^{2\rho}[\Lambda_{\lambda}(s)]^{\bar{\beta}_{3}}\frac{ds}{s}\big) ,\,\, x \in B_{\frac{\rho}{2}}(x_{0}),
\end{multline*}
provided that
\begin{equation*}
N\geqslant \gamma\,\frac{\rho}{\lambda(\rho)}\alpha^{-\tau_{1}} \bigg(\frac{\rho}{r}\bigg)^{\tau_{2}}\exp\big(\gamma \int\limits_{2\bar{r}}^{2\rho}[\Lambda_{\lambda}(s)]^{\bar{\beta}_{3}}\frac{ds}{s}\big),
\end{equation*}
which completes the proof of the theorem.

\subsection{ Proof of Theorem \ref{th1.2} } \label{subsect3.3}

Inequality \eqref{eq1.11} of Theorem \ref{th1.2} follows immediately from Theorem \ref{th1.5} with $\Lambda_{\lambda}(\rho)\leqslant \gamma$ , indeed set
$$\bar{m}(\rho)=\frac{1}{\lambda(\rho)} \{\min\limits_{B_{\frac{\rho}{2}}(x_{0})}u(x) + \rho \}.$$
 By Theorem \ref{th1.5}  with $r=\rho$ we obtain for
$\theta\in\big(0, \frac{1}{\tau_{1}}\big)$
\begin{multline}\label{eq3.3}
\rho^{-n}\,\int\limits_{B_{\rho}(x_{0})}u^{\theta}\,dx=\theta \,\rho^{-n}\,\int\limits_{0}^{\infty}|\{B_{\rho}(x_{0}): u(x) >N\}|\,N^{\theta-1}\,dN \leqslant [\bar{m}(\rho)]^{\theta}+\\+\gamma [\bar{m}(\rho)]^{\frac{1}{\tau_{1}}}\int\limits_{\bar{m}(\rho)}^{\infty} N^{\theta-\frac{1}{\tau_{1}}-1} \,dN \leqslant \frac{\gamma \tau_{1}}{1-\theta \tau_{1}}\,[\bar{m}(\rho)]^{\theta},
\end{multline}
which proves inequality \eqref{eq1.11}.

To prove \eqref{eq1.12} fix $\sigma\in(0,\frac{1}{8})$, $s\in (\frac{3}{4}\rho, \frac{7}{8}\rho)$ and  let $M_{0}:=\sup\limits_{B_{s}(x_{0})} u$, $M_{\sigma}:= \sup\limits_{B_{s(1-\sigma)}(x_{0})} u$. Fix $\bar{x}\in B_{s(1-\sigma)}(x_{0})$ and for $j= 0,1,2,....$ set $\rho_{j}:=s\frac{\sigma}{2}(1+  2^{-j})$, $B_{j}:=B_{\rho_{j}}(\bar{x})$, $k_{j}= k(1-2^{-j})$, where $k >0$    and suppose that $(u(\bar{x})-k)_{+} \geqslant \rho$, then $\sup\limits_{B_{j}}(u-k_{j})_{+} \geqslant \rho$. We will use inequality \eqref{eq2.2}, by condition $(\varPhi^{1}_{\Lambda})$ with $K=M$ we have
\begin{equation*}
\Lambda_{\varPhi}\big(\bar{x}, \rho_{j}, \sup\limits_{B_{j}}(u-k_{j})_{+}/\rho_{j}\big)\leqslant \gamma(M) \Lambda_{1}(\bar{x}, \rho_{j})\leqslant \gamma \Lambda_{1}(\rho).
\end{equation*}
Hence, inequality \eqref{eq2.2} can be rewritten as
\begin{equation*}
\int\limits_{B_{j+1}\cap\{u>k_{j}\}}|\nabla u|\,dx \leqslant \frac{\gamma}{\sigma^{\gamma}} 2^{j\gamma}\, M_{0}\,\rho^{n-1}
[\Lambda_{1}(\rho)]^{\frac{1}{p}}\bigg(\frac{|B_{j}\cap\{u>k_{j}\}|}{|B_{j}|}\bigg)^{1-\frac{1}{tp} -\frac{1}{sp}}.
\end{equation*}
Since $\bar{x}$ is an arbitrary point in $B_{s(1-\sigma)}(x_{0})$ this inequality by standard arguments ( see e.g. \cite{LadUr}) yields
\begin{equation*}
M_{\sigma}^{1+\frac{1}{\kappa}} \leqslant \gamma\,\sigma^{-\gamma}\,M_{0}^{\frac{1}{\kappa}}\,
[\Lambda_{1}(\rho)]^{\frac{1}{p\kappa}}\,\rho^{-n}\int\limits_{B_{0}}\,u\,dx +\gamma\,\rho^{1+\frac{1}{\kappa}},
\quad \kappa=\frac{1}{n}-\frac{1}{tp}-\frac{1}{sp} >0,
\end{equation*}
which by the Young inequality implies for any $\varepsilon, \theta \in(0,1)$ that
\begin{equation*}
M_{\sigma} \leqslant \varepsilon\,M_{0} +\gamma\,\sigma^{-\gamma}[\Lambda_{1}(\rho)]^{\frac{1}{p\kappa\theta}}\bigg(\rho^{-n}\int\limits_{B_{0}}\,u^{\theta}\,dx\bigg)^{\frac{1}{\theta}} +\gamma\,\rho.
\end{equation*}
Iterating this inequality we arrive at
\begin{equation}\label{eq3.4}
\sup\limits_{B_{\frac{\rho}{2}}(x_{0})} u \leqslant \gamma\,[\Lambda_{1}(\rho)]^{\frac{1}{p\kappa\theta}}\bigg(\rho^{-n}\int\limits_{B_{0}}\,u^{\theta}\,dx\bigg)^{\frac{1}{\theta}} +\gamma\,\rho.
\end{equation}
Collecting  \eqref{eq1.11} and \eqref{eq3.4} with $\theta=\dfrac{1}{2\tau_{1}}$ we arrive at the required
\eqref{eq1.12} with $\beta_{2}=(2p\kappa \tau_{1})^{-1}$, which completes the proof of Theorem \ref{th1.2}.

\subsection{ Proof of Theorem \ref{th1.3} }\label{subsect3.4}

Construct the ball $B_{\rho\tau}(x_{0})$, $\tau\in(0,1)$ and set $u_{0}:=u(x_{0})$, $M_{\tau}:=\max\limits_{B_{\rho\tau}(x_{0})}u$,\\
$N_{\tau}:=\dfrac{u_{0}}{2}\,(1-\tau)^{-l_{1}}\,\exp\bigg(l_{2}\big[\log\log\frac{1}{(1-\tau)\rho}\big]^{Ll_{3}} - l_{2}\big[\log\log\frac{1}{\rho}\big]^{Ll_{3}}\bigg)\,\exp\big(c\int\limits_{\psi((1-\tau)\rho)}^{\psi(\rho)}[\log\log\frac{1}{s}]^{L\beta}\frac{ds}{s}\big),$ where $c$ and $\beta$ the numbers defined in Theorem \ref{th1.5}, $l_{1}, l_{2}, l_{3} >0$ will be chosen  depending only on the known data and
$$\psi(r)=\varepsilon_{0}\,\alpha^{2}(r)\,r\,[\log\log\frac{1}{r}]^{-L},$$
here $\varepsilon_{0} \in(0, 1)$ is the number, defined in Theorem \ref{th1.5} and $\alpha(r) \in (0,1)$ is continuous non-increasing function, which will be defined later. Consider the equation
\begin{equation}\label{eq3.5}
M_{\tau}=N_{\tau}.
\end{equation}

Further we will assume that
\begin{equation}\label{eq3.6}
u_{0}\geqslant l_{4}\,\rho\,\,\log\frac{1}{\rho},
\end{equation}
with some $l_{4}>0$ to be fixed later.

Let $\tau_{0}\in (0,1)$ be the maximal root of equation \eqref{eq3.5} and fix $y$ by the condition \\$u(y)=\max\limits_{B_{\rho\tau_{0}}(x_{0})}u $.
Since $B_{\frac{\rho}{2}(1-\tau_{0})}(y)\subset
B_{\frac{\rho}{2}(1+\tau_{0})}(x_{0})$,
setting $r=\frac{\rho}{2}(1-\tau_{0})$ we have
\begin{multline*}
\max_{B_{r}(y)}u
\leqslant u(y) 2^{l_{1}}\,\exp\bigg(l_{2}(\big[\log\log\frac{1}{r}\big]^{L l_{3}}-\big[\log\log\frac{1}{2r}\big]^{L l_{3}})\bigg)
\,\exp\big(c\int\limits_{\psi(r)}^{\psi(2 r)}[\log\log\frac{1}{s}]^{L \beta}\frac{ds}{s}\big) \leqslant \\ \leqslant
u(y) 2^{l_{1}}\,\exp\bigg(l_{2}(\big[\log\log\frac{1}{r}\big]^{L l_{3}}-\big[\log\log\frac{1}{2r}\big]^{L l_{3}})\bigg)
\exp\big(c[\log\log\frac{1}{\psi(r)}]^{L\beta}\,\log\frac{\psi(2r)}{\psi(r)}\big).
\end{multline*}
Let us estimate the terms on the right-hand side of the previous inequality. If
\begin{equation}\label{eq3.7}
L l_{3} < 1 \quad \text{and}\quad 2 l_{2}\big[\log\frac{1}{2\rho}\big]^{-1} \leqslant 1,
\end{equation}
then
\begin{equation*}
 l_{2}(\big[\log\log\frac{1}{r}\big]^{L l_{3}}-\big[\log\log\frac{1}{2r}\big]^{L l_{3}}) \leqslant 1.
\end{equation*}
 By our assumption $\alpha(r)$ is non-increasing, therefore choosing $\rho$ sufficiently small we have
$$\frac{\psi(2r)}{\psi(r)}=2 \frac{\alpha^{2}(2r)}{\alpha^{2}(r)}\,\frac{[\log\log\frac{1}{r}]^{L}}{[\log\log\frac{1}{2r}]^{L}}
\leqslant 2\,\frac{[\log\log\frac{1}{r}]^{L}}{[\log\log\frac{1}{2r}]^{L}} \leqslant 4.$$
Moreover
$$\log\log\frac{1}{\psi(r)}=\log\log\frac{[\log\log\frac{1}{r}]^{L}}{\varepsilon_{0}r\alpha(r)}\leqslant 2\log\log\frac{1}{r},$$
provided that $\rho$ is small enough and
\begin{equation}\label{eq3.8}
 2 \log\frac{1}{\alpha(r)} \leqslant \log\frac{1}{r}.
\end{equation}
Therefore
$$\max_{B_{r}(y)}u \leqslant u(y)\,2^{l_{1}+1}\,\exp\big(8^{L \beta}\,c\,[\log\log\frac{1}{r}]^{L \beta}\big). $$

\textsl{Claim.}
There exists a positive number $\nu_{1}\in(0,1)$ depending only on the known data
 and $l_{1}$ \,\,\,such that
\begin{equation*}
\left| \left\{ x\in B_{r}(y):
u(x)\geqslant \frac{u(y)}{4}  \right\}  \right|\geqslant \nu_{1}\,[\log\log\frac{1}{r}]^{-\bar{\beta}_{2}}\,\exp\bigg(-\frac{q}{p\kappa} 8^{L\beta}\,c\,[\log\log\frac{1}{r}]^{L \beta}\bigg)\,| B_{r}(y)|,
\end{equation*}
where $\bar{\beta}_{2}$ is the number defined in Lemma \ref{lem2.4} and $\kappa= \frac{1}{n}-\frac{1}{sp}-\frac{1}{tp} >0$.

Indeed, in the opposite case we apply Lemma \ref{lem2.4} with the choices
\vskip0.3cm
$M(r)=u(y)\,2^{l_{1}+1}\,\exp\big(8^{L \beta}\,c\,[\log\log\frac{1}{r}]^{L \beta}\big) , \quad
\xi\,\omega(r) =(M(r)-\frac{1}{4}u(y)),$
\vskip0.3cm
$1-\eta=\bigg(\dfrac{M(r)-\frac{11}{16}u(y)}{M(r)-\frac{1}{4}u(y)}\bigg)^{\frac{1}{2}} < 1$ and $\eta \geqslant \dfrac{1}{\gamma(l_{1})}\exp\big(-8^{L \beta}\,c\,[\log\log\frac{1}{r}]^{L \beta}\big).$
\vskip0.3cm
By Lemma \ref{lem2.4} it follows that if
\begin{equation}\label{eq3.9}
\eta\,(1-\eta) \xi\,\omega(r) \geqslant r
\end{equation}
then
$$
u(y)\leqslant\max\limits_{B_{\frac{r}{2}}(y)} u \leqslant M(r)- (1-\eta)^{2}\xi\,\omega(r)
=\frac{11}{16}\,u(y),
$$
reaching a contradiction, which proves the claim.

We note that inequality \eqref{eq3.9} is a consequence of \eqref{eq3.6}, provided that $\rho$ is small enough and
\begin{equation}\label{eq3.10}
L\, \beta <1 \quad \text{and} \quad l_{4} \geqslant \gamma(l_{1}).
\end{equation}

We set
$$\alpha(r) := \nu_{1}\,[\log\log\frac{1}{r}]^{-\bar{\beta}_{2}}\,\exp\bigg(-\frac{q}{p\kappa} 8^{L\beta}\,c\,[\log\log\frac{1}{r}]^{L \beta}\bigg)$$
and  apply Theorem \ref{th1.5} with $\alpha = \alpha(r)$, $\bar{r}=\varepsilon_{0} \alpha^{2}(r) r [\log\log\frac{1}{r}]^{-L}$  and
$N=\frac{u(y)}{4}$,
note that by our choices $\bar{r}=\psi(r)$, so we obtain
\begin{multline*}
\min\limits_{B_{\frac{\rho}{2}}(x_{0})} u \geqslant \gamma^{-1}\,u(y)\,\alpha^{\tau_{1}}(r)\bigg(\frac{r}{\rho}\bigg)^{\tau_{2}}
\,\exp\bigg(-c\int\limits_{2\bar{r}}^{\rho}[\log\log\frac{1}{s}]^{L\,\beta}\frac{ds}{s}\bigg) \geqslant \\ \geqslant
\gamma^{-1}\nu^{\tau_{1}}_{1}\,u(y)\bigg(\frac{r}{\rho}\bigg)^{\tau_{2}}[\log\log\frac{1}{r}]^{-\bar{\beta}_{2} \tau_{1}}
\,\exp\bigg(-c\int\limits_{\psi(2r)}^{\rho}[\log\log\frac{1}{s}]^{L \beta}\,\frac{ds}{s}-c\frac{q}{p\kappa}8 ^{L\beta}[\log\log\frac{1}{r}]^{L\beta} \bigg).
\end{multline*}
If $\rho$ is small enough, then $\log\log\dfrac{1}{r} \leqslant \dfrac{\rho}{r} \log\log\dfrac{1}{\rho}$, hence
from the previous we obtain
\begin{multline*}
\min\limits_{B_{\frac{\rho}{2}}(x_{0})} u \geqslant \gamma^{-1}\,\nu^{\tau_{1}}_{1}\,u_{0}\,\bigg(\frac{\rho}{r}\bigg)^{l_{1}-1-\tau_{2}-\bar{\beta}_{2}\tau_{1}}\,[\log\log\frac{1}{\rho}]^{-\bar{\beta}_{2}\tau_{1}}\exp\big(-l_{2}[\log\log\frac{1}{\rho}]^{L l_{3}}\big)\times\\ \times\exp\bigg(l_{2}[\log\log\frac{1}{r}]^{L l_{3}}-c\frac{q}{p\kappa}8 ^{L\beta}[\log\log\frac{1}{r}]^{L\beta} \bigg)\, \exp\big(-c\int\limits_{\psi(\rho)}^{\rho}[\log\log\frac{1}{s} ]^{L \beta}\frac{ds}{s}\big).
\end{multline*}
Fix $l_{1}, l_{2}$ and $l_{3}$ by the conditions
\begin{equation*}
l_{1}=1+\tau_{2} +\bar{\beta}_{2}\tau_{1},\quad l_{2}=c\frac{q}{p\kappa}8^{L\beta},\quad l_{3}=\beta,
\end{equation*}
then the last inequality can be rewritten as
\begin{equation*}
\min\limits_{B_{\frac{\rho}{2}}(x_{0})} u \geqslant \gamma^{-1}\,\nu^{\tau_{1}}_{1}\,u_{0}\,[\log\log\frac{1}{\rho}]^{-\bar{\beta}_{2}\tau_{1}}\exp\bigg(-l_{2}[\log\log\frac{1}{\rho}]^{L l_{3}}-c\int\limits_{\psi(\rho)}^{\rho}[\log\log\frac{1}{s} ]^{L \beta}\frac{ds}{s}\bigg).
\end{equation*}
Using the fact that $\log\log\frac{1}{\psi(\rho)}\leqslant \gamma \log\log\frac{1}{\rho}$, $\log\frac{\rho}{\psi(\rho)}\leqslant \gamma [\log\log\frac{1}{\rho}]^{L \beta}$ and $[\log\log\frac{1}{\rho}]^{\bar{\beta}_{2}\tau_{1}}\leqslant\\ \leqslant \exp\big(\gamma [\log\log\frac{1}{\rho}]^{L\beta}\big)$  if $\rho$ is small enough, from the previous we arrive at
\begin{equation*}
\min\limits_{B_{\frac{\rho}{2}}(x_{0})} u \geqslant \gamma^{-1}\,\nu^{\tau_{1}}_{1}\,u_{0}\,\exp\big(-\gamma [\log\log\frac{1}{\rho}]^{L\beta}\big).
\end{equation*}

Choose $l_{4}$ sufficiently large, according to \eqref{eq3.10}, choose $L$ sufficiently small, according to \eqref{eq3.7}, \eqref{eq3.10}, choose $\rho$ small enough, according to the second inequality in \eqref{eq3.7} and note that  \eqref{eq3.8} holds by our choice of $\alpha(r)$,  we arrive at
\begin{equation*}
\min\limits_{B_{\frac{\rho}{2}}(x_{0})} u \geqslant \gamma^{-1}\,\nu^{\tau_{1}}_{1}\,\frac{u_{0}}{\log\frac{1}{\rho}},
\end{equation*}
provided that inequality \eqref{eq3.6} holds. This completes the proof of Theorem \ref{th1.3}.

\section{ Pointwise Estimates of Auxiliary Solutions, Proof of \\Theorems \ref{th1.4} and \ref{th1.6}}\label{Sec4}

We will assume that the following
integral identity holds:
\begin{equation}\label{eq4.1}
\int\limits_{D} \varPhi(x, |\nabla w|)\,\frac{\nabla w}{|\nabla w|^{2}} \nabla\eta \,dx=0
\quad \text{for any } \ \eta \in W^{1,\varPhi}_{0}(D).
\end{equation}
The existence of the solutions $w$ follows from the general theory of monotone operators. Testing \eqref{eq4.1} by $\eta=(w-m)_{+}$ and  by $\eta=w_{-}$  we obtain that $0\leqslant w\leqslant m \leqslant \lambda(\rho) M$.

To formulate our next result, we need the notion of the capacity. For this set
$$
C_{\varPhi}(E, B_{8\rho}(x_{0});m):=\,\frac{1}{m}\,\inf\limits_{v\in \mathfrak{M}(E)}
\int\limits_{B_{8\rho}(x_{0})}\varPhi(x,m|\nabla v|)\,dx ,
$$
where the infimum is taken over the set $\mathfrak{M}(E)$   of all functions
$v \in W^{1,\varPhi}_{0}(B_{8\rho}(x_{0}))$ with $v \geqslant1$ on $E$. If $m=1$,
this definition leads to the standard definition of $C_{\varPhi}(E, B_{8\rho}(x_{0}))$
capacity (see, e.g. \cite{HarHasZAn19}).

Further we will assume that
\begin{equation}\label{eq4.2}
\Lambda^{-1}_{+,x_{0},8\rho}\bigg(\frac{C_{\varPhi}(E, B_{8\rho}(x_{0}); m)}{\rho^{n-1}\,[\Lambda_{\lambda}\big(x_{0},\rho\big)]^{\bar{c}_{1}}}\bigg) \geqslant \bar{c},
\end{equation}
where $\Lambda^{-1}_{+,x_{0},8\rho}(\cdot)$ is the inverse function to $\Lambda_{+,\varphi}(x_{0}, 8\rho, \cdot)$, $\varphi(x, v):=\dfrac{\varPhi(x, v)}{v}$, $v>0$ and $\bar{c}, \bar{c}_{1} > 0 $ to be chosen later depending only on the data.

\subsection{Upper bound for the function $w$}\label{subsec4.1}
We note that in the standard case (i.e. if $p=q$) the upper bound for the function $w$
was proved in \cite{IVSkr1983}
(see also \cite[Chap.\,8, Sec.\,3]{IVSkrMetodsAn1994}, \cite{IVSkrSelWorks}).
\begin{lemma}\label{lem4.1}
There exists $\bar{\beta} >0$ depending only on the data such that
\begin{equation*}
w(x)\leqslant \gamma\,\,\,\rho\,\Lambda^{-1}_{+,x_{0},8\rho}\bigg(\big[\Lambda_{\lambda}(x_{0}, \rho)\big]^{\bar{\beta}}\,\frac{C_{\varPhi}(E, B_{8\rho}(x_{0}); m)}{\rho^{n-1}}\bigg),\quad x\in K_{\frac{3}{2}\rho, 8\rho}=B_{8\rho}(x_{0})\setminus B_{\frac{3}{2}\rho}(x_{0}).
\end{equation*}
\end{lemma}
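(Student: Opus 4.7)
The plan is to follow the potential-type comparison strategy of Skrypnik \cite{IVSkr1983}, adapted to the generalized Orlicz setting through the condition $(\varPhi^{\lambda}_{\Lambda,x_{0}})$. The proof splits naturally into three blocks: a global integral bound obtained by comparison with an almost-minimizer of the capacity, a De Giorgi iteration to upgrade to a pointwise bound, and an inversion step through $\Lambda^{-1}_{+,x_{0},8\rho}$.

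First I would record that, since $w$ solves \eqref{eq1.14} in $D = B_{8\rho}(x_{0})\setminus E$, $w$ belongs to $DG^{+}_{\varPhi}(D)$, and, after zero extension, vanishes on $\partial B_{8\rho}(x_{0})$; so the Caccioppoli inequality \eqref{eq1.8} holds on any ball in $D$ and on any ball touching the outer boundary. Next, pick an almost-minimizer $v_{\ast}\in\mathfrak{M}(E)$ for $C_{\varPhi}(E,B_{8\rho}(x_{0});m)$ and use $\eta=(w-mv_{\ast})_{+}$ (which lies in $W^{1,\varPhi}_{0}(D)$, being zero on $E$ and near $\partial B_{8\rho}$) in \eqref{eq4.1}. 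Together with monotonicity \eqref{eq1.16} and the Young inequality \eqref{eq2.1}, this yields the basic energy estimate
\begin{equation*}
\int\limits_{B_{8\rho}(x_{0})}\varPhi(x,|\nabla w|)\,dx \leqslant \gamma\,m\,C_{\varPhi}(E,B_{8\rho}(x_{0});m).
\end{equation*}

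Second, I would iterate, at a fixed $\bar{x}\in K_{\frac{3}{2}\rho,8\rho}$, the Caccioppoli inequality on a geometric sequence of balls $B_{r_{j}}(\bar{x})$ with truncation levels $k_{j}\nearrow k:=w(\bar{x})$. The standard De Giorgi scheme, applied as in Lemma \ref{lem2.4} and using \eqref{eq2.2} with $\theta=p\frac{t}{t+1}$ together with the Sobolev embedding, reduces the pointwise bound on $w(\bar{x})$ to an integral bound of the form
\begin{equation*}
\varphi\Big(x_{0},\frac{w(\bar{x})}{\rho}\Big)^{\text{(avg)}}\leqslant \gamma\,[\Lambda_{\lambda}(x_{0},\rho)]^{\bar{\beta}}\,\frac{1}{\rho^{n-1}}\int\limits_{B_{8\rho}(x_{0})}\varPhi(x,|\nabla w|)\,\frac{dx}{m},
\end{equation*}
where the $L^{s}$-average in the sense of $\Lambda_{+,\varphi}(x_{0},8\rho,\cdot)$ appears because inverting $\varPhi$ to $w/\rho$ passes through the integrability given by $(\varPhi^{\lambda}_{\Lambda,x_{0}})$. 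Inserting the capacity estimate from the first step and inverting via $\Lambda^{-1}_{+,x_{0},8\rho}$ yields the claimed bound.

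The main obstacle will be bookkeeping the $\Lambda_{\lambda}$-factors. Each invocation of $(\varPhi^{\lambda}_{\Lambda,x_{0}})$, both in upgrading the comparison test to the energy bound (because $m|\nabla v_{\ast}|$ need not respect the range $r\leqslant v\leqslant K\lambda(r)$ pointwise) and in converting integral information on $\varPhi(x,\cdot)$ into pointwise bounds, produces a power of $\Lambda_{\lambda}(x_{0},\rho)$. Tracking these carefully along the geometric iteration and absorbing them into a single exponent $\bar{\beta}$ is delicate; it also requires a bootstrap to handle the fact that $c_{1}(K)$ in $(\varPhi^{\lambda}_{\Lambda,x_{0}})$ depends on the sup of $w/\rho$, which is itself what we are trying to estimate. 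This bootstrap closes because $w\leqslant m\leqslant \lambda(\rho)M$, so the supremum is a priori in the admissible range of $(\varPhi^{\lambda}_{\Lambda,x_{0}})$, and all the $\Lambda_{\lambda}$-dependences can then be collected into the single factor $[\Lambda_{\lambda}(x_{0},\rho)]^{\bar{\beta}}$ inside $\Lambda^{-1}_{+,x_{0},8\rho}$.
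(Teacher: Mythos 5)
Your high-level architecture agrees with the paper's: a capacity/energy comparison for $\int_D\varPhi(x,|\nabla w|)\,dx$, a De\,Giorgi-type iteration in annuli $K_{s,8\rho}$, and an inversion step through $\Lambda^{-1}_{+,x_0,8\rho}$. But two specific mechanisms that make the paper's argument close are missing, and without them the plan as stated does not go through.

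First, the "basic energy estimate'' $\int\varPhi(x,|\nabla w|)\,dx\leqslant\gamma\,m\,C_\varPhi$ alone is not enough; what the iteration actually needs is the \emph{truncated} version $\int_D\varPhi(x,|\nabla w_{M_0}|)\,dx\leqslant\gamma\,M_0\,C_\varPhi(E,B_{8\rho}(x_0);m)+\gamma\,M_0\,\rho^n$, where $w_{M_0}=\min\{w,M_0\}$ and $M_0=\sup_{K_{s,8\rho}}w$. The paper obtains this by a second test function $\eta=w_{M_0}-\tfrac{M_0}{m}\,w$ after the first comparison with $m\psi$. This replaces the uncontrolled factor $m$ by $M_0$, which is exactly what allows you to divide out $M_0$ later. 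Your plan stops at the $m$-level bound, and then the factor $m$ cannot be matched against the $M_0$ coming from the De\,Giorgi step.

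Second, and more seriously, the claim that the "standard De\,Giorgi scheme\,\dots reduces the pointwise bound on $w(\bar x)$ to an integral bound of the form $\varphi(x_0,w(\bar x)/\rho)\leqslant\cdots$'' is not what De\,Giorgi iteration produces. After Caccioppoli, Poincar\'e, H\"older and $(\varPhi^\lambda_{\Lambda,x_0})$, one gets (paper's \eqref{eq4.6})
\begin{equation*}
M_\sigma\leqslant 2\varepsilon M_0+\gamma\,\varepsilon^{-\gamma}\sigma^{-\gamma}\,M_0\,
\frac{[\Lambda_\lambda(x_0,\rho)]^{\bar\beta_4}}{\big[\Lambda_{+,\varPhi}(x_0,8\rho,M_0/\rho)\big]^{1/p}}
\Big(\rho^{-n}\!\int_D\varPhi(x,|\nabla w_{M_0}|)\,dx\Big)^{1/p}+\gamma\rho[\Lambda_\lambda(x_0,\rho)]^{\bar c_1},
\end{equation*}
an inequality that is \emph{inhomogeneous} in $M_0$ (the second term carries $M_0/[\Lambda_{+,\varPhi}(x_0,8\rho,M_0/\rho)]^{1/p}$) and therefore cannot be iterated directly to a bound on $\sup w$. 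The paper's key device is to pass from an iteration on $M_\sigma$ to an iteration on $[\Lambda_{+,\varphi}(x_0,8\rho,M_\sigma/\rho)]^{1/p}$, using the Young-type inequality \eqref{eq2.1} applied to the non-decreasing function $v\mapsto[\Lambda_{+,\varphi}(x_0,8\rho,v)]^{1/p}$ and the identity $\Lambda_{+,\varPhi}=\frac{M_0}{\rho}\Lambda_{+,\varphi}$. This converts the above into a linear, absorbable recursion for $[\Lambda_{+,\varphi}(x_0,8\rho,\cdot)]^{1/p}$, whose output is precisely $[\Lambda_{+,\varphi}(x_0,8\rho,M_0/\rho)]^{1/p}\leqslant\gamma[\Lambda_\lambda(x_0,\rho)]^{\bar\beta_4}(\rho^{1-n}C_\varPhi)^{1/p}$; inversion is then immediate. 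Your proposal treats "inversion through $\Lambda^{-1}_{+,x_0,8\rho}$'' as a cosmetic final step, but it is in fact the variable you must iterate on from the start. The difficulty you flag at the end ($c_1(K)$ depending on $\sup w/\rho$) is not actually an obstruction — you correctly note $w\leqslant m\leqslant\lambda(\rho)M$, so $(\varPhi^\lambda_{\Lambda,x_0})$ applies a priori — whereas the genuine obstruction is the inhomogeneity of the De\,Giorgi recursion just described, which your plan does not address.
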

\begin{proof}
Fix $\sigma\in(0,\frac{1}{8})$, $s\in ((1+\sigma)\rho, (8-\sigma)\rho)$ and  let $M_{0}:=\sup\limits_{K_{s,8\rho}} w$, $M_{\sigma}:= \sup\limits_{K_{s(1+\sigma), 8\rho}} w$. Fix $\bar{x}\in K_{s(1+\sigma), 8\rho}$ and for $j= 0,1,2,....$ set $\rho_{j}:=s\frac{\sigma}{2}(1+  2^{-j})$, $B_{j}:=B_{\rho_{j}}(\bar{x})$, $k_{j}= k(1-2^{-j})$, where $k >0$    and suppose that $(w(\bar{x})-k)_{+} \geqslant \bar{c} \rho\big[\Lambda_{\lambda}(x_{0}, \rho)\big]^{\bar{c}_{1}}$, then $\sup\limits_{B_{j}}(w-k_{j})_{+} \geqslant \rho$. We will use inequality \eqref{eq2.2}, by condition $(\varPhi^{\lambda}_{\Lambda, x_{0}})$ with $K=M$, using the fact that $\sup\limits_{B_{j}}(w-k_{j})_{+} \leqslant \\ \leqslant m\leqslant \lambda(\rho) M$ we have
\begin{equation*}
\Lambda_{\varPhi}\big(\bar{x}, \rho_{j}, \sup\limits_{B_{j}}(w-k_{j})_{+}/\rho_{j}\big)\leqslant \gamma(M) \Lambda_{\lambda}(\bar{x}, \rho_{j})\leqslant \gamma \Lambda_{\lambda}(x_{0}, \rho).
\end{equation*}
Hence, inequality \eqref{eq2.2} with $\theta= p\frac{t}{t+1}$ can be rewritten as
\begin{equation*}
\int\limits_{B_{j+1}\cap\{u>k_{j}\}}|\nabla w|^{\theta}\,dx \leqslant \frac{\gamma}{\sigma^{\gamma}} 2^{j\gamma}\, \bigg(\frac{M_{0}}{\rho}\bigg)^{\theta}\,\rho^{n}  [\Lambda_{\lambda}(x_{0}, \rho)]^{\frac{\theta}{p}}\bigg(\frac{|B_{j}\cap\{w>k_{j}\}|}{|B_{j}|}\bigg)^{1-\frac{\theta}{tp} -\frac{\theta}{sp}}.
\end{equation*}
Since $\bar{x}$ is an arbitrary point in $\bar{x}\in K_{s(1+\sigma), 8\rho}$ this inequality by standard arguments ( see e.g. \cite{LadUr}) yields
\begin{equation*}
M_{\sigma}^{\theta+\frac{1}{\kappa}} \leqslant \gamma\,\sigma^{-\gamma}\,M_{0}^{\frac{1}{\kappa}}\,
[\Lambda_{\lambda}(x_{0}, \rho)]^{\frac{1}{p\kappa}}\,\rho^{-n}\int\limits_{K_{s,8\rho}}\,w^{\theta}\,dx +\gamma\,\sigma^{-\gamma}\rho^{\theta+\frac{1}{\kappa}}\big[\Lambda_{\lambda}(x_{0}, \rho)\big]^{(\theta+\frac{1}{\kappa})\bar{c}_{1}},
\end{equation*}
where $\kappa=\frac{1}{n}-\frac{1}{tp}-\frac{1}{sp} >0$. Using the Young inequality we obtain for any $\varepsilon \in(0, 1)$
\begin{equation}\label{eq4.3}
M_{\sigma} \leqslant \varepsilon M_{0} +\gamma\,\sigma^{-\gamma}\,\varepsilon^{-\gamma}[\Lambda_{\lambda}(x_{0}, \rho)]^{\frac{1}{p\theta \kappa}}\,\bigg(\rho^{-n}\int\limits_{K_{s,8\rho}}\,w^{\theta}\,dx\bigg)^{\frac{1}{\theta}} +\gamma\,\rho\big[\Lambda_{\lambda}(x_{0},\rho)\big]^{\bar{c}_{1}},\quad \theta=p\frac{t}{t+1}.
\end{equation}
Let us estimate the second term on the right-hand side of \eqref{eq4.3}. For this we assume that \\ $M_{\sigma}\geqslant \bar{c}\sigma^{-\gamma}\rho\big[\Lambda_{\lambda}(x_{0}, \rho)\big]^{\bar{c}_{1}} \geqslant \rho$, because otherwise, by \eqref{eq4.2} the upper estimate  is evident. Set \\$w_{M_{0}}:=\min\{w, M_{0}\}$, by the Poincare and H\"{o}lder inequalities we have with arbitrary $\varepsilon_{1} \in (0, 1)$
\begin{multline}\label{eq4.4}
\bigg(\rho^{-n}\int\limits_{K_{s, 8\rho}} w^{\theta}\,dx\bigg)^{\frac{1}{\theta}} = \bigg(\rho^{-n}\int\limits_{K_{s, 8\rho}} w^{\theta}_{M_{0}}\,dx\bigg)^{\frac{1}{\theta}}\leqslant \gamma \rho \bigg(\rho^{-n}\int\limits_{D}
| \nabla w_{M_{0}}|^{\theta}\,dx\bigg)^{\frac{1}{\theta}} \leqslant \\ \leqslant \gamma \rho \bigg(\rho^{-n}\int\limits_{D}|\nabla w_{M_{0}}|^{p}\varphi_{p}\big(x,\varepsilon_{1}\frac{M_{0}}{\rho}\big)\,dx\bigg)^{\frac{1}{p}}\bigg(\rho^{-n}\int\limits_{B_{8\rho}(x_{0})} \big[\varphi_{p}\big(x, \varepsilon_{1}\frac{M_{0}}{\rho}\big)\big]^{-t}\,dx\bigg)^{\frac{1}{pt}}=\\=\gamma \varepsilon_{1} M_{0} \bigg(\rho^{-n}\int\limits_{D}|\nabla w_{M_{0}}|^{p}\varphi_{p}\big(x,\varepsilon_{1}\frac{M_{0}}{\rho}\big)\,dx\bigg)^{\frac{1}{p}}\bigg(\rho^{-n}\int\limits_{B_{8\rho}(x_{0})} \big[\varPhi\big(x, \varepsilon_{1} \frac{M_{0}}{\rho}\big)\big]^{-t}\,dx\bigg)^{\frac{1}{pt}} \leqslant \\ \leqslant
\gamma \varepsilon_{1} M_{0} \bigg(\rho^{-n}\int\limits_{D}\varPhi\big(x,|\nabla w_{M_{0}}|\big)dx +\rho^{-n}\int\limits_{B_{8\rho}(x_{0})}\varPhi\big(x,\varepsilon_{1} \frac{M_{0}}{\rho}\big)\,dx\bigg)^{\frac{1}{p}}\times \\ \times \bigg(\rho^{-n}\int\limits_{B_{8\rho}(x_{0})} \big[\varPhi\big(x, \varepsilon_{1} \frac{M_{0}}{\rho}\big)\big]^{-t}\,dx\bigg)^{\frac{1}{pt}},
\end{multline}
above we also used inequality \eqref{eq2.1} with $\varepsilon=1$. Fix $\varepsilon_{1}$ by the condition
$$\varepsilon_{1}= \gamma^{-1} \varepsilon^{1+\gamma} \sigma^{\gamma} \big[\Lambda_{\lambda}(x_{0}, \rho)\big]^{-\frac{1}{p}(1+\frac{1}{\theta \kappa})}.$$
 The second term on the right-hand side of \eqref{eq4.4} we estimate using
condition $(\varPhi^{\lambda}_{\Lambda, x_{0}})$. By our choice $\varepsilon_{1} M_{0} \geqslant \varepsilon_{1}\bar{c} \sigma^{-\gamma} \big[\Lambda(x_{0},\rho)\big]^{\bar{c}_{1}}\rho \geqslant \rho$, provided that $\bar{c} \geqslant \varepsilon^{-1-\gamma}\,\gamma$ and $\bar{c}_{1} \geqslant \frac{1}{p}(1+\frac{1}{\theta \kappa})$, moreover $\varepsilon_{1}M_{0}\leqslant m \leqslant  \lambda(\rho) M$, therefore by condition $(\varPhi^{\lambda}_{\Lambda, x_{0}})$
\begin{multline}\label{eq4.5}
\bigg(\rho^{-n}\int\limits_{B_{8\rho}(x_{0})}\big[\varPhi\big(x,\varepsilon_{1} \frac{M_{0}}{\rho}\big)\big]^{s}\,dx\bigg)^{\frac{1}{ps}}\bigg(\rho^{-n}\int\limits_{B_{8\rho}(x_{0})} \big[\varPhi\big(x, \varepsilon_{1} \frac{M_{0}}{\rho}\big)\big]^{-t}\,dx\bigg)^{\frac{1}{pt}} \leqslant \\
 \leqslant \gamma \big[\Lambda_{+,\varPhi}(x_{0}, 8\rho,\varepsilon_{1} \frac{M_{0}}{\rho})\big]^{\frac{1}{p}}\,\,\big[\Lambda_{-,\varPhi}(x_{0}, 8\rho,\varepsilon_{1}\frac{ M_{0}}{\rho})\big]^{\frac{1}{p}}
\leqslant \gamma \big[\Lambda_{\lambda}(x_{0},\rho)\big]^{\frac{1}{p}}.
\end{multline}
Combining estimates \eqref{eq4.3}--\eqref{eq4.5} and using condition $(\varPhi)$ we arrive at
\begin{multline}\label{eq4.6}
M_{\sigma}\leqslant 2\varepsilon M_{0} +\gamma M_{0}\frac{\big[\Lambda_{\lambda}(x_{0},\rho)\big]^{\frac{1}{p}}}{\big[\Lambda_{+,\varPhi}(x_{0}, 8\rho,\varepsilon_{1} \dfrac{M_{0}}{\rho})\big]^{\frac{1}{p}}}\bigg(\rho^{-n}\int\limits_{D} \varPhi(x, |\nabla w_{M_{0}}| \,dx \bigg)^{\frac{1}{p}}
+\gamma \rho \big[\Lambda_{\lambda}(x_{0}, \rho)\big]^{\bar{c}_{1}} \leqslant \\
\leqslant 2\varepsilon M_{0} +\gamma \varepsilon^{-\gamma}\sigma^{-\gamma} M_{0}\frac{\big[\Lambda_{\lambda}(x_{0},\rho)\big]^
{\bar{\beta}_{4}}}{\big[\Lambda_{+,\varPhi}(x_{0}, 8\rho, \dfrac{M_{0}}{\rho})\big]^{\frac{1}{p}}}\bigg(\rho^{-n}\int\limits_{D} \varPhi(x, |\nabla w_{M_{0}}| \,dx \bigg)^{\frac{1}{p}}+\gamma \rho \big[\Lambda_{\lambda}(x_{0}, \rho)\big]^{\bar{c}_{1}},
\end{multline}
where $\bar{\beta}_{4}=\frac{1}{p}(1+(q-1)\frac{1}{\theta\kappa})$.

We need to estimate the integral on the right-hand side of \eqref{eq4.6}. Let $\psi \in W^{1,\varPhi}_{0}(B_{8\rho}(x_{0}))$, $\psi=1 $ on $E$, be such that $\frac{1}{m}\int\limits_{B_{8\rho}(x_{0})}\varPhi(x, m|\nabla \psi|)\,dx \leqslant \gamma C_{\varPhi}(E, B_{8\rho}(x_{0});m) +\gamma \rho^{n}$, test identinty \eqref{eq4.1} by $ \eta= w- m\psi$, using inequality \eqref{eq2.1} with $\theta=1$, $a=|\nabla w|$,
$b=|\nabla \psi|$ and sufficiently small $\varepsilon$  we obtaint
\begin{equation*}
\int\limits_{D}\varPhi(x,|\nabla w|)\,dx \leqslant \gamma \int\limits_{B_{8\rho}(x_{0})} \varPhi(x,m |\nabla \psi|)\,dx \leqslant
\gamma m C_{\varPhi}(E, B_{8\rho}(x_{0});m) +\gamma m\rho^{n}.
\end{equation*}
Now testing identity \eqref{eq4.1} by $\eta= w_{M_{0}} -\dfrac{M_{0}}{m}w$, using \eqref{eq2.1} and the previous inequality we obtain
\begin{equation}\label{eq4.7}
\int\limits_{D}\varPhi(x,|\nabla w_{M_{0}}|)\,dx \leqslant \gamma \frac{M_{0}}{m}\int\limits_{D}\varPhi(x,|\nabla w|)\,dx \leqslant \gamma M_{0} C_{\varPhi}(E, B_{8\rho}(x_{0});m) +\gamma M_{0}\rho^{n}.
\end{equation}
Combining estimates \eqref{eq4.6} and \eqref{eq4.7}, using the fact that $\Lambda_{+,\varPhi}(x_{0}, 8\rho, \frac{M_{0}}{\rho})=\frac{M_{0}}{\rho} \Lambda_{+,\varphi}(x_{0}, 8\rho, \frac{M_{0}}{\rho})$  and using our assumption that $M_{0} \geqslant \bar{c} \rho\big[\Lambda_{\lambda}(x_{0}, \rho)\big]^{\bar{c}_{1}}$ we obtain
\begin{multline*}
M_{\sigma}\leqslant 2\varepsilon M_{0} +\gamma \varepsilon^{-\gamma}\sigma^{-\gamma} M_{0}\frac{\big[\Lambda_{\lambda}(x_{0},\rho)\big]^
{\bar{\beta}_{4}}}{\big[\Lambda_{+,\varPhi}(x_{0}, 8\rho, \frac{M_{0}}{\rho})\big]^{\frac{1}{p}}}\bigg(\rho^{-n} M_{0} C_{\varPhi}(E, B_{8\rho}(x_{0});m) + M_{0} \bigg)^{\frac{1}{p}} + \\+\gamma \rho \big[\Lambda_{\lambda}(x_{0}, \rho)\big]^{\bar{c}_{1}}  \leqslant  (2\varepsilon+ \frac{\gamma}{\bar{c}}) M_{0} +\gamma \varepsilon^{-\gamma}\sigma^{-\gamma} M_{0}\frac{\big[\Lambda_{\lambda}(x_{0},\rho)\big]^
{\bar{\beta}_{4}}}{\big[\Lambda_{+,\varphi}(x_{0}, 8\rho, \frac{M_{0}}{\rho})\big]^{\frac{1}{p}}}\bigg(\rho^{1-n} C_{\varPhi}(E, B_{8\rho}(x_{0});m) + \rho \bigg)^{\frac{1}{p}} \\ \leqslant (2\varepsilon +\frac{\gamma}{\bar{c}}) M_{0} +\gamma \varepsilon^{-\gamma}\sigma^{-\gamma} M_{0}\frac{\big[\Lambda_{\lambda}(x_{0},\rho)\big]^{\bar{\beta}_{4}}}{\big[\Lambda_{+,\varphi}(x_{0}, 8\rho, \frac{M_{0}}{\rho})\big]^{\frac{1}{p}}}\bigg(\rho^{1-n} C_{\varPhi}(E, B_{8\rho}(x_{0});m)\bigg)^{\frac{1}{p}} .
\end{multline*}
By the fact that  $\Lambda_{+,\varphi}(x_{0}, \rho, \varepsilon_{0} \frac{M_{0}}{\rho})\leqslant \varepsilon^{p-1}_{0}
\Lambda_{+,\varphi}(x_{0}, \rho, \frac{M_{0}}{\rho})$, $\varepsilon_{0} \in (0, 1)$ and using \eqref{eq2.1} with $a= \frac{M_{\sigma}}{\rho}$, $b=1$, $\varepsilon$ replaced by $\varepsilon_{0} \frac{M_{0}}{\rho}$ and  $\varphi_{p}(x,\cdot)$ replaced by  $\big[\Lambda_{+,\varphi}(x_{0}, \rho, \cdot )\big]^{\frac{1}{p}}$ from this we obtain
\begin{multline*}
\big[\Lambda_{+,\varphi}(x_{0}, 8\rho, \frac{M_{\sigma}}{\rho})\big]^{\frac{1}{p}}\leqslant \frac{M_{\sigma}}{\varepsilon_{0} M_{0}}
\big[\Lambda_{+,\varphi}(x_{0}, 8\rho, \frac{M_{\sigma}}{\rho})\big]^{\frac{1}{p}}
+\big[\Lambda_{+,\varphi}(x_{0}, 8\rho, \varepsilon_{0} \frac{M_{0}}{\rho})\big]^{\frac{1}{p}} \leqslant\\ \leqslant  \frac{M_{\sigma}}{\varepsilon_{0} M_{0}}\big[\Lambda_{+,\varphi}(x_{0}, 8\rho, \frac{M_{0}}{\rho})\big]^{\frac{1}{p}}+\varepsilon^{\frac{p-1}{p}}_{0}\big[\Lambda_{+,\varphi}(x_{0}, 8\rho, \frac{M_{0}}{\rho})\big]^{\frac{1}{p}} \leqslant\\ \leqslant  (\varepsilon^{\frac{p-1}{p}} +2\frac{\varepsilon}{\varepsilon_{0}}+\frac{\gamma}{\bar{c}})\big[\Lambda_{+,\varphi}(x_{0}, 8\rho, \frac{M_{0}}{\rho})\big]^{\frac{1}{p}} + \gamma \varepsilon^{-1}_{0}\varepsilon^{-\gamma}\sigma^{-\gamma} \big[\Lambda_{\lambda}(x_{0},\rho)\big]^{\bar{\beta}_{4}} \bigg(\rho^{1-n} C_{\varPhi}(E, B_{8\rho}(x_{0});m)\bigg)^{\frac{1}{p}}.
\end{multline*}
Fix $\varepsilon$, $\bar{c}$ by the conditions $\varepsilon = \frac{1}{4} \varepsilon^{2+\frac{p}{p-1}}_{0}$, $\bar{c} \geqslant \frac{4\gamma}{\varepsilon_{0}}$,  $\varepsilon_{0} \in (0, 1)$, then
the last inequality yields
\begin{equation*}
\big[\Lambda_{+,\varphi}(x_{0}, 8\rho, \frac{M_{\sigma}}{\rho})\big]^{\frac{1}{p}}\leqslant \varepsilon_{0}\big[\Lambda_{+,\varphi}(x_{0}, 8\rho, \frac{M_{0}}{\rho})\big]^{\frac{1}{p}} + \gamma \varepsilon^{-\gamma}_{0} \sigma^{-\gamma}  \big[\Lambda_{\lambda}(x_{0},\rho)\big]^{\bar{\beta}_{4}} \bigg(\rho^{1-n} C_{\varPhi}(E, B_{8\rho}(x_{0});m)\bigg)^{\frac{1}{p}}.
\end{equation*}
Iterating this inequality we arrive at
\begin{equation*}
\big[\Lambda_{+,\varphi}(x_{0}, 8\rho, \frac{M_{0}}{\rho})\big]^{\frac{1}{p}}\leqslant \gamma \big[\Lambda_{\lambda}(x_{0},\rho)\big]^{\bar{\beta}_{4}} \bigg(\rho^{1-n} C_{\varPhi}(E, B_{8\rho}(x_{0});m)\bigg)^{\frac{1}{p}},
\end{equation*}
which proves the required upper bound with $\bar{\beta}=p \bar{\beta}_{4}= 1+(q-1)\frac{t+1}{pt \kappa}$. This completes the proof of the lemma.
\end{proof}

\subsection{Lower bound for the function $w$}\label{subsec4.2}

The main step in the proof of the lower bound is the following lemma.
\begin{lemma}\label{lem4.2}
There exist  numbers $\varepsilon$, $\vartheta\in(0,1)$, $\bar{\beta}_{5} >0$ depending only on the data
such that
\begin{multline}\label{eq4.8}
\left| \left\{K_{\frac{3}{2}\rho,\, 4\rho}:
w(x)\leqslant \varepsilon\,\rho\,
\Lambda^{-1}_{+, x_{0},8\rho}\left( \frac{C_{\varPhi}(E, B_{8\rho}(x_{0});m)}{\rho^{\,n-1}} \right)\right\} \right|
\leqslant \\ \leqslant \left(1-\vartheta\,\big[\Lambda_{\lambda}(x_{0}, \rho)\big]^{-\bar{\beta}_{5}} \right) |K_{\frac{3}{2}\rho,\, 4\rho}|.
\end{multline}
\end{lemma}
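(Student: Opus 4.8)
\medskip
\noindent\emph{Sketch of a proof.} The plan is to argue by contradiction, playing off an \emph{unconditional} lower bound for the Dirichlet energy of $w$ on a fixed annulus against an upper bound forced by the negation of \eqref{eq4.8}. Write $C:=C_{\varPhi}(E,B_{8\rho}(x_{0});m)$ and $L:=\Lambda^{-1}_{+,x_{0},8\rho}\!\big(C/\rho^{n-1}\big)$, so that $\Lambda_{+,\varphi}(x_{0},8\rho,L)=C/\rho^{n-1}$, put $d:=\varepsilon\rho L$, and let $\Gamma$ be the right-hand side of Lemma \ref{lem4.1}, so $0\leqslant w\leqslant\Gamma$ on $K_{\frac32\rho,8\rho}$. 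Testing $C$ with the standard cut-off equal to $1$ on $B_{2\rho}(x_{0})$ and supported in $B_{3\rho}(x_{0})$ gives $C\leqslant\gamma\rho^{n-1}\Lambda_{+,\varphi}(x_{0},8\rho,m/\rho)$, whence $d\leqslant\gamma\varepsilon\min\{m,\Gamma\}$ by $(\varPhi)$ and the definition of $L$; also \eqref{eq4.2} yields $L\geqslant\bar{c}$, i.e.\ $d\geqslant\varepsilon\bar{c}\rho$. Choosing $\varepsilon$ small and $\bar{c},\bar{c}_{1}$ large (all depending only on the data), I may assume $d\leqslant\tfrac14\min\{m,\Gamma\}$ and $d\geqslant r$ for every ball $B_{r}(y)$ of radius $r\in[\tfrac{\rho}{8},\tfrac{\rho}{2}]$ used below. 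The contradiction hypothesis is that \eqref{eq4.8} fails, i.e.\ $|\{x\in K_{\frac32\rho,4\rho}:w(x)>d\}|\leqslant\vartheta\,[\Lambda_{\lambda}(x_{0},\rho)]^{-\bar{\beta}_{5}}\,|K_{\frac32\rho,4\rho}|$.

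\emph{Lower bound.} Following the Maz'ya--Landis method for potential-type solutions \cite{Maz,Landis_mngrph71}, the capacitary $\varphi$-flux of $w$ is conserved across the spheres $\partial B_{t}(x_{0})$, $t\in(\rho,8\rho)$, and, by the co-area formula and the admissibility of $w/m$ for $C$, it equals $\tfrac1m\int_{D}\varPhi(x,|\nabla w|)\,dx\geqslant C$; hence
\begin{equation*}
\int\limits_{\partial B_{t}(x_{0})}\varphi(x,|\nabla w|)\,d\mathcal H^{n-1}\;\geqslant\;C\qquad\text{for a.e.\ }t\in(2\rho,3\rho),
\end{equation*}
and integration in $t$ gives $\int_{K_{2\rho,3\rho}}\varphi(x,|\nabla w|)\,dx\geqslant\gamma^{-1}\rho\,C$. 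On $\{|\nabla w|\leqslant d/\rho\}$ one has $\varphi(x,|\nabla w|)\leqslant\varphi(x,d/\rho)$ and, by H\"older, the identity $\Lambda_{+,\varphi}(x_{0},8\rho,L)=C/\rho^{n-1}$ and $(\varPhi)$, $\int_{K_{2\rho,3\rho}}\varphi(x,d/\rho)\,dx\leqslant\gamma\varepsilon^{p-1}\rho\,C$, which is negligible for $\varepsilon$ small; since $\varphi(x,|\nabla w|)<(\rho/d)\,\varPhi(x,|\nabla w|)$ on the complementary set, one arrives at the unconditional bound $\int_{K_{2\rho,3\rho}}\varPhi(x,|\nabla w|)\,dx\geqslant c_{\flat}\,d\,C$ with a data constant $c_{\flat}>0$.

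\emph{Upper bound and conclusion.} Cover $K_{2\rho,3\rho}$ by finitely many balls $B_{r}(y_{i})$, $r=\rho/4$, with $B_{2r}(y_{i})\subset K_{\frac32\rho,4\rho}$, recall that $w\in DG_{\varPhi}$, and split $\int_{K_{2\rho,3\rho}}\varPhi(x,|\nabla w|)\,dx$ over $\{w\leqslant d\}$ and $\{w>d\}$. Applying the Caccioppoli inequality \eqref{eq1.8} with $k=d$ to $(w-d)_{-}\leqslant d$ and summing, the first part is $\leqslant\gamma\int_{B_{8\rho}(x_{0})}\varPhi(x,d/\rho)\,dx\leqslant\gamma\varepsilon^{p-1}d\,C$, again by $(\varPhi)$ and the definition of $L$. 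Applying \eqref{eq1.8} with $k=d$ to $(w-d)_{+}\leqslant M_{0}:=\sup_{K_{\frac32\rho,4\rho}}w\leqslant\Gamma$, H\"older on the small set $\{w>d\}\cap K_{\frac32\rho,4\rho}$ and $(\varPhi)$ together with Lemma \ref{lem4.1} (to compare $M_{0}/\rho$ with $L$ through $\Lambda^{-1}_{+,x_{0},8\rho}$), the second part is $\leqslant\gamma\varepsilon^{-1}\vartheta^{1-1/s}[\Lambda_{\lambda}(x_{0},\rho)]^{\frac{\bar{\beta}q}{p-1}-\bar{\beta}_{5}(1-\frac1s)}d\,C$, where $\bar{\beta}$ is the exponent of Lemma \ref{lem4.1}. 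Fixing $\bar{\beta}_{5}\geqslant\bar{\beta}q/\big((p-1)(1-\tfrac1s)\big)$, then $\varepsilon$ small, then $\vartheta$ small (all depending only on the data) makes each part $\leqslant\tfrac14 c_{\flat}d\,C$, so $\int_{K_{2\rho,3\rho}}\varPhi(x,|\nabla w|)\,dx\leqslant\tfrac12 c_{\flat}d\,C$, contradicting the lower bound; hence \eqref{eq4.8} holds.

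The hard part is the unconditional lower bound: the capacity $C$ is an infimum over all of $B_{8\rho}(x_{0})$, and it must be converted into a definite share of the energy of $w$ on the annulus $K_{2\rho,3\rho}$, which a priori need carry none of it; the conservation of the capacitary $\varphi$-flux across spheres is exactly the mechanism, and making it rigorous for the weak solution $w$ is the one genuine technicality (handled by the potential-type arguments of \cite{Maz,Landis_mngrph71}, or via the interior $C^{1,\alpha}$-regularity of $w$ away from $E$). Everything else is careful but routine bookkeeping of the exponents $p,q,s,t$ and of the factors $\Lambda_{\lambda},\Lambda_{+,\varphi}$ produced by H\"older's inequality and condition $(\varPhi)$.
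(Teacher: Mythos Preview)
Your argument is correct, and its core ingredient coincides with the paper's: the inequality $\int_{K_{2\rho,3\rho}}\varphi(x,|\nabla w|)\,dx\geqslant\gamma^{-1}\rho\,C$ that you obtain via ``conservation of the $\varphi$-flux'' is exactly what the paper gets by testing \eqref{eq4.1} with $\eta=w-m\zeta_{1}^{q}$ (your acknowledged ``technicality'' is that test, nothing more). From that point on, however, the two proofs diverge in organisation.

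The paper proceeds \emph{directly}: a second test, $\eta=w\,\zeta_{2}^{q}$, gives the Caccioppoli bound $\int_{K_{2\rho,3\rho}}\varPhi(x,|\nabla w|)\,dx\leqslant\gamma\int_{K_{\frac32\rho,4\rho}}\varPhi(x,w/\rho)\,dx$; combining with admissibility and a suitable choice of $\varepsilon_{1}$ produces $C\leqslant\gamma\varepsilon_{1}^{-1}\int_{K_{\frac32\rho,4\rho}}\varPhi(x,w/\rho)\,dx$, and this single integral is split over $\{w\leqslant d\}$ and $\{w>d\}$, yielding the measure inequality immediately without contradiction. You instead argue by contradiction, and this costs you an extra step: to pass from the $\varphi$-lower bound to a $\varPhi$-lower bound $\int_{K_{2\rho,3\rho}}\varPhi(x,|\nabla w|)\,dx\geqslant c_{\flat}dC$ you must cut off in the \emph{gradient} at level $d/\rho$; then your upper bound uses two Caccioppoli applications, to $(w-d)_{-}$ and $(w-d)_{+}$, plus a finite cover. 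All of this is sound, but the paper's route avoids both the gradient cut-off and the split Caccioppoli by working with $\int\varPhi(x,w/\rho)\,dx$ throughout. What your approach buys is a clean separation of ``lower energy bound vs.\ upper energy bound'' that is closer in spirit to the classical Maz'ya--Landis potential arguments you cite; what the paper's approach buys is brevity and the explicit value $\bar{\beta}_{5}=\frac{s}{s-1}\frac{q\bar{\beta}}{p}$.
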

\begin{proof}
Let $\zeta_{1}(x) \in C_{0}^{\infty}(B_{3\rho}(x_{0}))$, $0\leqslant \zeta_{1}(x)\leqslant 1$, $\zeta_{1}(x)=1$ in $B_{2\rho}(x_{0})$ and $|\nabla \zeta_{1}|\leqslant \frac{\gamma}{\rho}.$ Test \eqref{eq4.1} by $\eta= w- m\zeta^{q}$ and use the Young inequality \eqref{eq2.1} with $\theta=1$ we obtain
with any $\varepsilon_{1} \in(\rho, M \lambda(\rho))$
\begin{multline*}
\int\limits_{D}\,\varPhi(x, |\nabla w|)\,dx \leqslant \gamma \frac{m}{\rho} \int\limits_{K_{2\rho, 3\rho}}\varphi(x,|\nabla w|)\,\zeta^{q-1}_{1}\,dx \leqslant \gamma\frac{m}{\varepsilon_{1}}\int\limits_{K_{2\rho, 3\rho}}\varPhi(x,|\nabla w|)\,dx +\\+
\gamma\frac{m}{\rho}\int\limits_{K_{2\rho, 3\rho}}\varphi(x,\frac{\varepsilon_{1}}{\rho})\,dx \leqslant \gamma\frac{m}{\varepsilon_{1}}\int\limits_{K_{2\rho, 3\rho}}\varPhi(x,|\nabla w|)\,dx + \gamma m\rho^{n-1}\bigg(\rho^{-n}\int\limits_{B_{8\rho}(x_{0})}\big[\varphi(x,\frac{\varepsilon_{1}}{\rho})\big]^{s}\,dx\bigg)^{\frac{1}{s}}\leqslant \\ \leqslant \gamma\frac{m}{\varepsilon_{1}}\int\limits_{K_{2\rho, 3\rho}}\varPhi(x,|\nabla w|)\,dx + \gamma m\rho^{n-1} \Lambda_{+,\varphi}(x_{0}, 8\rho, \frac{\varepsilon_{1}}{\rho}).
\end{multline*}
Let $\zeta_{2}(x)\in C^{\infty}_{0}(K_{\frac{3}{2}\rho, 4\rho})$, $0\leqslant \zeta_{2}(x)\leqslant 1$, $\zeta_{2}(x) =1 $ in $K_{\rho, 2\rho}$
and $| \nabla \zeta_{2}|\leqslant \frac{\gamma}{\rho}$. Testing \eqref{eq4.1} by $\eta= w\,\zeta^{q}$ and using the Young inequality \eqref{eq2.1} we estimate the first term on the right-hand side of the previous inequality as follows
\begin{equation*}
\int\limits_{K_{2\rho, 3\rho}}\varPhi(x,|\nabla w|)\,dx \leqslant \int\limits_{K_{\frac{3}{2}\rho, 4\rho}}\varPhi(x,|\nabla w|)\,\zeta^{q}_{2}\,dx \leqslant \gamma \int\limits_{K_{\frac{3}{2}\rho, 4\rho}}\varPhi\big(x,\frac{w}{\rho}\big)\,dx.
\end{equation*}
Combining the last two inequalities and using the definition of the capacity we obtain
\begin{equation}\label{eq4.9}
C_{\varPhi}(E, B_{8\rho}(x_{0});m) \leqslant \frac{1}{m} \int\limits_{D}\varPhi(x,|\nabla w|)\,dx \leqslant
\frac{\gamma}{\varepsilon_{1}}\int\limits_{K_{\frac{3}{2}\rho, 4\rho}}\varPhi\big(x,\frac{w}{\rho}\big)\,dx +
\gamma \rho^{n-1} \Lambda_{+,\varphi}(x_{0}, 8\rho, \frac{\varepsilon_{1}}{\rho}).
\end{equation}
Choose $\varepsilon_{1}$ by the condition $\gamma \rho^{n-1} \Lambda_{+,\varphi}(x_{0}, 8\rho, \frac{\varepsilon_{1}}{\rho}) = \frac{1}{4} C_{\varPhi}(E, B_{8\rho}(x_{0});m)$, i.e.
\begin{equation*}
\varepsilon_{1}= \rho \Lambda^{-1}_{+,x_{0}, 8\rho}\bigg(\frac{C_{\varPhi}(E, B_{8\rho}(x_{0});m)}{4\gamma\,\rho^{\,n-1}} \bigg)
\geqslant (4\gamma)^{-\frac{1}{p-1}}\,\,\rho\,\Lambda^{-1}_{+,x_{0}, 8\rho}\bigg(\frac{C_{\varPhi}(E, B_{8\rho}(x_{0});m)}{\rho^{\,n-1}} \bigg).
\end{equation*}
Hence inequality \eqref{eq4.9} implies
\begin{equation}\label{eq4.10}
C_{\varPhi}(E, B_{8\rho}(x_{0});m) \leqslant \frac{\gamma}{\varepsilon_{1}}\int\limits_{K_{\frac{3}{2}\rho, 4\rho}}\varPhi\big(x,\frac{w}{\rho}\big)\,dx.
\end{equation}
Let us estimate the integral on the right-hand side of inequality \eqref{eq4.10}, for this we decompose $K_{\frac{3}{2}\rho, 4\rho}$ as
$K_{\frac{3}{2}\rho, 4\rho} = K'\cup K''$, $K':=K_{\frac{3}{2}\rho, 4\rho}\cap\big\{w\leqslant \varepsilon\,\rho\,\delta(\rho, m) \big\}$ and  $K'':=K_{\frac{3}{2}\rho, 4\rho}\setminus K'$, $\delta(\rho, m)=\Lambda^{-1}_{+,x_{0},8\rho}\left( \frac{C_{\varPhi}(E, B_{8\rho}(x_{0});m)}{\rho^{\,n-1}} \right)$. By $(\varPhi)$ and our choice of $\varepsilon_{1}$ we have
\begin{multline}\label{eq4.11}
\frac{\gamma }{\varepsilon_{1}}\int\limits_{K'}\varPhi\big(x,\frac{w}{\rho}\big)\,dx \leqslant \frac{\gamma \varepsilon^{p}}{\varepsilon_{1}}\delta(\rho, m)\,\rho^{n}\bigg(\rho^{-n} \int\limits_{B_{8\rho}(x_{0})} \big[\varphi(x, \delta(\rho, m))\big]^{s}\,dx\bigg)^{\frac{1}{s}}
\leqslant \\ \leqslant \gamma \varepsilon^{p}\rho^{n-1} \Lambda_{+, \varphi}(x_{0}, 8\rho, \delta(\rho, m)) \leqslant \gamma \varepsilon^{p}
C_{\varPhi}(E, B_{8\rho}(x_{0}) ;m).
\end{multline}
Similarly, by $(\varPhi)$, Lemma \ref{lem4.1} and our choice of $\varepsilon_{1}$
\begin{multline}\label{eq4.12}
\frac{\gamma }{\varepsilon_{1}}\int\limits_{K''}\varPhi\big(x,\frac{w}{\rho}\big)\,dx\leqslant \frac{\gamma}{\varepsilon_{1}}\delta(\rho, m)\,\rho^{n}[\Lambda_{\lambda}(x_{0},\rho)]^{\frac{q\bar{\beta}}{p}}\bigg(\rho^{-n} \int\limits_{B_{8\rho}(x_{0})} \big[\varphi(x, \delta(\rho, m))\big]^{s}\,dx\bigg)^{\frac{1}{s}}\bigg(\frac{|K''|}{|K_{\frac{3}{2}\rho, 4\rho}|}\bigg)^{1-\frac{1}{s}}\\
\leqslant \gamma C_{\varPhi}(E, B_{8\rho}(x_{0}) ;m)[\Lambda_{\lambda}(x_{0},\rho)]^{\frac{q\bar{\beta}}{p}}\bigg(\frac{|K''|}{|K_{\frac{3}{2}\rho, 4\rho}|}\bigg)^{1-\frac{1}{s}}.
\end{multline}
Collecting estimates \eqref{eq4.10}--\eqref{eq4.12} we obtain
\begin{equation*}
1\leqslant \gamma \varepsilon^{p} + \gamma [\Lambda_{\lambda}(x_{0},\rho)]^{\frac{q\bar{\beta}}{p}}\bigg(\frac{|K''|}{|K_{\frac{3}{2}\rho, 4\rho}|}\bigg)^{1-\frac{1}{s}},
\end{equation*}
choosing $\varepsilon$ from the condition $\gamma \varepsilon^{p} = \frac{1}{2}$, from the previous we arrive at the required
\eqref{eq4.8} with $\bar{\beta}_{5}= \frac{s}{s-1} \frac{q\bar{\beta}}{p}$, which completes the proof of the lemma.
\end{proof}
The following lemma is the main result of this Paragraph.
\begin{lemma}\label{lem4.3}
There exist numbers $\bar{\varepsilon}\in (0,1)$, $\bar{\beta}_{6}$, $\bar{\beta}_{7} > 0$ depending only on the data such that
\begin{equation}\label{eq4.13}
\big|\big\{K_{\frac{3}{2}\rho, 4\rho} : w \leqslant \bar{\varepsilon} m \big[\Lambda_{\lambda}(x_{0}, \rho)\big]^{-\bar{\beta}_{6}} \bigg(\frac{|E|}{\rho^{n}}\bigg)^{\bar{\beta}_{7}} \big\}\big|\leqslant \big(1-\vartheta\,\big[\Lambda_{\lambda}(x_{0}, \rho)\big]^{-\bar{\beta}_{5}} \big) |K_{\frac{3}{2}\rho,\, 4\rho}|,
\end{equation}
provided that
\begin{equation}\label{eq4.14}
m\,\bigg(\frac{|E|}{\rho^{n}}\bigg)^{\bar{\beta}_{7}} \geqslant \frac{\rho}{\bar{\varepsilon}} \big[\Lambda_{\lambda}(x_{0}, \rho)\big]^{\bar{\beta}_{6}},
\end{equation}
where $\bar{\beta}_{5}$, $\vartheta > 0$ were defined in Lemma \ref{lem4.2}.
\end{lemma}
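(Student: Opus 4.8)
The plan is to deduce the lemma from Lemma~\ref{lem4.2} by supplying a lower bound for the capacity $C_{\varPhi}(E,B_{8\rho}(x_{0});m)$ in terms of the relative measure $|E|/\rho^{n}$. Namely, I would first establish that there are $\gamma\geqslant1$ and $\bar{\beta}_{6},\bar{\beta}_{7}>0$ depending only on the data such that, whenever \eqref{eq4.14} holds,
\begin{equation*}
\Lambda^{-1}_{+,x_{0},8\rho}\!\left(\frac{C_{\varPhi}(E,B_{8\rho}(x_{0});m)}{\rho^{\,n-1}}\right)\geqslant\frac{1}{\gamma}\,\frac{m}{\rho}\,\big[\Lambda_{\lambda}(x_{0},\rho)\big]^{-\bar{\beta}_{6}}\left(\frac{|E|}{\rho^{n}}\right)^{\bar{\beta}_{7}}.
\end{equation*}
Granting this and putting $\bar{\varepsilon}:=\varepsilon\gamma^{-1}$ with $\varepsilon$ the constant of Lemma~\ref{lem4.2}, the set on the left of \eqref{eq4.13} is contained in $\big\{K_{\frac32\rho,4\rho}:w\leqslant\varepsilon\rho\,\Lambda^{-1}_{+,x_{0},8\rho}\big(C_{\varPhi}(E,B_{8\rho}(x_{0});m)/\rho^{\,n-1}\big)\big\}$, so \eqref{eq4.13} is exactly the conclusion of Lemma~\ref{lem4.2} for that larger set. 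Moreover \eqref{eq4.14} says precisely that the right-hand threshold in \eqref{eq4.13} is $\geqslant\rho$, which together with the capacity bound forces \eqref{eq4.2}, so that Lemmas~\ref{lem4.1}--\ref{lem4.2} are applicable.

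To prove the capacity bound I fix $\theta:=p\frac{t}{t+1}$ (so that $1<\theta<n$ and $\frac{\theta}{p-\theta}\leqslant t$) and take any $v\in\mathfrak{M}(E)$. Since $v\geqslant1$ on $E$ and $v\in W^{1,\theta}_{0}(B_{8\rho}(x_{0}))$ (a consequence of $v\in W^{1,\varPhi}_{0}$ together with $(\varPhi_{0})$, $(\varPhi)$), the Sobolev inequality yields $|E|^{\frac{n-\theta}{n}}\leqslant\gamma\int_{B_{8\rho}(x_{0})}|\nabla v|^{\theta}\,dx$. Next, for $\tau>0$ I split $B_{8\rho}(x_{0})$ into $\{m|\nabla v|<\tau\}$ and its complement; on the second set I use that $\varphi_{p}(x,\cdot)$ is non-decreasing, write $(m|\nabla v|)^{\theta}=[\varPhi(x,m|\nabla v|)]^{\theta/p}[\varphi_{p}(x,m|\nabla v|)]^{-\theta/p}$, apply the H\"older inequality with exponents $p/\theta$ and $p/(p-\theta)$, and use $[\varphi_{p}(x,\tau)]^{-t}=\tau^{tp}[\varPhi(x,\tau)]^{-t}$, to obtain
\begin{equation*}
\int_{B_{8\rho}(x_{0})}(m|\nabla v|)^{\theta}\,dx\leqslant\tau^{\theta}|B_{8\rho}(x_{0})|+\gamma\!\left(\int_{B_{8\rho}(x_{0})}\!\!\varPhi(x,m|\nabla v|)\,dx\right)^{\!\theta/p}\!\rho^{\frac{n(p-\theta)}{p}}\,\tau^{\theta}\,[\Lambda_{-,\varPhi}(x_{0},8\rho,\tau)]^{\theta/p}.
\end{equation*}
I then choose $\tau:=c_{*}\frac{m}{\rho}(|E|/\rho^{n})^{\frac{n-\theta}{n\theta}}$ with $c_{*}=c_{*}(\text{data})$ so small that $\tau^{\theta}|B_{8\rho}(x_{0})|\leqslant\frac12\gamma^{-1}m^{\theta}|E|^{\frac{n-\theta}{n}}$; by \eqref{eq4.14} (after adjusting constants) this $\tau$ lies in the admissible range $1\leqslant\tau\leqslant K\lambda(8\rho)/(8\rho)$ with $K$ controlled by $M$, so $(\varPhi^{\lambda}_{\Lambda,x_{0}})$ applies and gives $\Lambda_{-,\varPhi}(x_{0},8\rho,\tau)\leqslant c_{1}\,\Lambda_{\lambda}(x_{0},8\rho)/\Lambda_{+,\varPhi}(x_{0},8\rho,\tau)$. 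Combining, absorbing $\tau^{\theta}|B_{8\rho}(x_{0})|$, using $\Lambda_{+,\varPhi}(x_{0},8\rho,\tau)=\tau\,\Lambda_{+,\varphi}(x_{0},8\rho,\tau)$ and taking the infimum over $v$, I arrive at
\begin{equation*}
\frac{C_{\varPhi}(E,B_{8\rho}(x_{0});m)}{\rho^{\,n-1}}\geqslant\frac{1}{\gamma}\left(\frac{|E|}{\rho^{n}}\right)^{\frac{n-\theta}{n\theta}}\frac{\Lambda_{+,\varphi}(x_{0},8\rho,\tau)}{\Lambda_{\lambda}(x_{0},8\rho)}.
\end{equation*}
Since the prefactor of $\Lambda_{+,\varphi}(x_{0},8\rho,\tau)$ is $\leqslant1$, the sub-homogeneity $\Lambda_{+,\varphi}(x_{0},8\rho,\mu\tau)\leqslant\mu^{p-1}\Lambda_{+,\varphi}(x_{0},8\rho,\tau)$ for $\mu\in(0,1]$ (immediate from $(\varPhi)$) lets me move this prefactor inside the argument; applying $\Lambda^{-1}_{+,x_{0},8\rho}$ and $\Lambda_{\lambda}(x_{0},8\rho)\leqslant\Lambda_{\lambda}(x_{0},\rho)$ then yields the displayed capacity bound with $\bar{\beta}_{6}=\frac{1}{p-1}$ and $\bar{\beta}_{7}=\frac{p(n-\theta)}{n\theta(p-1)}$; after renaming these constants the proof is complete.

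The decisive step, and the only genuine obstacle, is the H\"older estimate: one has only $L^{t}$-integrability of $[\varPhi(x,\tau)]^{-1}$ and $L^{s}$-integrability of $\varPhi(x,\tau)$, so the exponent $\theta=p\frac{t}{t+1}$ must be chosen exactly to make $\frac{\theta}{p-\theta}\leqslant t$, and the possibly unbounded factor $[\Lambda_{-,\varPhi}(x_{0},8\rho,\tau)]^{\theta/p}$ must be traded against $[\Lambda_{+,\varPhi}(x_{0},8\rho,\tau)]^{-\theta/p}$ through $(\varPhi^{\lambda}_{\Lambda,x_{0}})$, which is valid only in the range $r\leqslant v\leqslant K\lambda(r)$ — and this is precisely what \eqref{eq4.14} secures. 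A secondary bookkeeping point is that the cut-off level $\tau$ appears on both sides of the estimate (it both absorbs $\tau^{\theta}|B_{8\rho}(x_{0})|$ and enters $\Lambda_{+,\varphi}(x_{0},8\rho,\tau)$), so it cannot be optimised freely; the choice above is a workable compromise, and since the lemma asks only for the existence of positive $\bar{\beta}_{6},\bar{\beta}_{7}$, the resulting loss of sharpness is harmless.
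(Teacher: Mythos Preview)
Your strategy is exactly the paper's: reduce \eqref{eq4.13} to Lemma~\ref{lem4.2} via a lower bound on $\Lambda^{-1}_{+,x_{0},8\rho}(C_{\varPhi}(E,B_{8\rho}(x_{0});m)/\rho^{n-1})$ in terms of $m$, $|E|/\rho^{n}$ and $\Lambda_{\lambda}(x_{0},\rho)$. Your derivation of that bound, however, is organised differently. The paper starts from the $L^{1}$-Poincar\'e inequality $m|E|\leqslant\gamma\rho\int|\nabla(m\psi)|\,dx$, passes to a weighted $L^{\theta}$-norm with weight $\varphi_{\theta}(x,m/\rho)$ (any $\theta\in(1+\tfrac{1}{t},p)$), and then uses the Young-type splitting \eqref{eq2.1} with an auxiliary parameter $\varepsilon\in(0,1)$; since everything is evaluated at $m/\rho$, the admissibility $\rho\leqslant m\leqslant\lambda(\rho)M$ for $(\varPhi^{\lambda}_{\Lambda,x_{0}})$ is automatic. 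You instead use the Sobolev inequality in $W^{1,\theta}_{0}$, split at a gradient level $\tau$, and evaluate $\Lambda_{\pm,\varPhi}$ at $\tau$, which forces the extra check that $\tau$ lies in the admissible range (you correctly secure this from \eqref{eq4.14}). Both routes yield positive exponents $\bar\beta_{6},\bar\beta_{7}$ depending only on the data, which is all the lemma asks; the paper obtains $\bar\beta_{6}=\tfrac{p}{(p-\theta)(p-1)}$, $\bar\beta_{7}=\tfrac{p\theta}{(p-\theta)(p-1)}$, different from yours.

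One point needs care in your version: you assert $\theta=p\,\tfrac{t}{t+1}<n$, but the hypotheses only give $1/(tp)+1/(sp)<1/n$, and when $p>n$ with $t$ large your choice can satisfy $\theta\geqslant n$, killing the Sobolev step. This is harmless once noticed: take any $\theta<\min\{n,\,p\,\tfrac{t}{t+1}\}$ with $\theta>1$ and the H\"older exponent $\theta/(p-\theta)\leqslant t$ still holds, so your argument goes through unchanged. The paper's use of $L^{1}$-Poincar\'e avoids this restriction altogether, which is the practical advantage of its implementation; yours has the appeal of making the passage from $\int\varPhi$ to a $\theta$-energy explicit through a level-set decomposition rather than through \eqref{eq2.1}.
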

\begin{proof}
To prove inequality \eqref{eq4.13} we need to estimate the term on the left-hand side of \eqref{eq4.8}. Let $\psi\in W^{1,\varPhi}_{0}(B_{8\rho}(x_{0}))$,  $\psi(x)=1$ for $x\in E$ and fix  $\theta \in (1+ \frac{1}{t}, p)$. By the Poincare and H\"{o}lder inequalities and using \eqref{eq2.1} with  $\varepsilon \in (0,1)$ we have
\begin{multline}\label{eq4.15}
m |E| \leqslant m\,\int\limits_{B_{8\rho}(x_{0})} | \psi|\,dx \leqslant \gamma\,\rho \int\limits_{B_{8\rho}(x_{0})} |\nabla( m\psi)|\,dx
\leqslant\\ \leqslant \gamma\,\rho \bigg(\int\limits_{B_{8\rho}(x_{0})} |\nabla( m\psi)|^{\theta}\,\varphi_{\theta}(x, \frac{m}{\rho})\,dx \bigg)^{\frac{1}{\theta}} \bigg(\int\limits_{B_{8\rho}(x_{0})}\big[\varphi_{\theta}(x,\frac{m}{\rho})\big]^{-\frac{1}{\theta-1}}\,dx\bigg)^{1-\frac{1}{\theta}}\leqslant\\ \leqslant \gamma m \rho^{n} \bigg(\varepsilon^{-\theta}\rho^{-n}\int\limits_{B_{8\rho}(x_{0})}\varPhi(x, |\nabla(m\psi)|)\,dx + \varepsilon^{p-\theta}\bigg(\rho^{-n} \int\limits_{B_{8\rho}(x_{0})} \big[\varPhi(x,\frac{m}{\rho})\big]^{s}\,dx\bigg)^{\frac{1}{s}}\bigg)^{\frac{1}{\theta}}\times \\ \times \bigg(\rho^{-n}\int\limits_{B_{8\rho}(x_{0})}\big[\varPhi(x,\frac{m}{\rho})\big]^{-t}\,dx\bigg)^{\frac{1}{\theta t}}.
\end{multline}
Let us estimate the terms on the right-hand side of \eqref{eq4.15}. Since $\rho \leqslant m\leqslant \lambda(\rho) M$, by $(\varPhi^{\lambda}_{\Lambda, x_{0}})$ we have
\begin{equation}\label{eq4.16}
\bigg(\rho^{-n} \int\limits_{B_{8\rho}(x_{0})} \big[\varPhi(x,\frac{m}{\rho})\big]^{s}\,dx\bigg)^{\frac{1}{\theta s}}
\bigg(\rho^{-n}\int\limits_{B_{8\rho}(x_{0})}\big[\varPhi(x,\frac{m}{\rho})\big]^{-t}\,dx\bigg)^{\frac{1}{\theta t}} \leqslant \gamma
[\Lambda_{\lambda}(x_{0},\rho)]^{\frac{1}{\theta}},
\end{equation}
therefore, choosing $\varepsilon$ from the condition
\begin{equation*}
\gamma \rho^{n} \varepsilon^{\frac{p-\theta}{\theta}}[\Lambda_{\lambda}(x_{0},\rho)]^{\frac{1}{\theta}} =\frac{1}{2}|E|,\quad \text{i.e.}\quad
\varepsilon=(2\gamma)^{-\frac{\theta}{p-\theta}}[\Lambda_{\lambda}(x_{0},\rho)]^{-\frac{1}{p-\theta}}\bigg(\frac{|E|}{\rho^{n}}\bigg)^{\frac{\theta}{p-\theta}} < 1,
\end{equation*}
we obtain from \eqref{eq4.15}, \eqref{eq4.16}
\begin{equation*}
\gamma^{-1}\,\varepsilon^{\theta} \bigg(\frac{|E|}{\rho^{n}}\bigg)^{\theta} \leqslant \frac{\Lambda_{\lambda}(x_{0},\rho)}{\bigg(\rho^{-n} \int\limits_{B_{8\rho}(x_{0})} \big[\varPhi(x,\frac{m}{\rho})\big]^{s}\,dx\bigg)^{\frac{1}{ s}}}\,\,\rho^{-n}\int\limits_{B_{8\rho}(x_{0})}\varPhi(x, |\nabla(m\psi)|)\,dx.
\end{equation*}
From this, using the definition of capacity and using the fact that $\varPhi(x,v)= v \varphi(x,v)$, $v>0$, we obtain
\begin{equation*}
\Lambda_{+,\varphi}(x_{0}, 8\rho,\frac{m}{\rho})\bigg(\frac{|E|}{\rho^{n}}\bigg)^{\theta}\leqslant \gamma \varepsilon^{-\theta}
\Lambda_{\lambda}(x_{0},\rho)\rho^{1-n}
C_{\varPhi}(E, B_{8\rho}(x_{0}); m),
\end{equation*}
which yields by our choice of $\varepsilon$
\begin{multline}\label{eq4.17}
\rho \Lambda^{-1}_{+,x_{0},8\rho}\bigg(\frac{C_{\varPhi}(E, B_{8\rho}(x_{0}); m)}{\rho^{n-1}}\bigg) \geqslant
\gamma^{-1}\,\,m\,\,\bigg(\varepsilon^{\theta} \big[\Lambda_{\lambda}(x_{0},\rho)\big]^{-1}\bigg(\frac{|E|}{\rho^{n}}\bigg)^{\theta}\bigg)^{\frac{1}{p-1}}=\\= \gamma^{-1}\,\,m\,\, \big[\Lambda_{\lambda}(x_{0},\rho)\big]^{-\bar{\beta}_{6}}
\bigg(\frac{|E|}{\rho^{n}}\bigg)^{\bar{\beta}_{7}},\quad \bar{\beta}_{6}=\frac{p}{(p-\theta)(p-1)},
\bar{\beta}_{7}=\frac{p\theta}{(p-\theta)(p-1)}.
\end{multline}
Therefore, inequality \eqref{eq4.13} is a consequence of \eqref{eq4.8}, provided that \eqref{eq4.2} is valid. By \eqref{eq4.17}, inequality \eqref{eq4.2}, in turn, is a consequence of  \eqref{eq4.14}, provided that $\bar{c}$ is large enough and $\bar{c}_{1} \geqslant \bar{\beta}_{6}$. This completes the proof of the lemma.

\subsection{Proof of Theorems \ref{th1.4} and \ref{th1.6}}\label{Subsec4.3}
Let $u \geqslant 0 $ be a super-solution to Eq. \eqref{eq1.14} in $\Omega$ and construct the sets $E(\rho, N):=B_{\rho}(x_{0})\cap\\ \cap \{u > N\}$ and $E_{\lambda}(\rho, N):=B_{\rho}(x_{0})\cap \{u > \lambda(\rho)\,N\}$, $ 0<N < M$, $E(\rho, N)\subset E_{\lambda}(\rho, N)$. Let $w$ be an auxiliary solution to the  problem \eqref{eq1.24} in $D=B_{8\rho}\setminus E_{\lambda}(\rho, N)$. Since $u \geqslant w$ on
$\partial D$, by the monotonicity condition \eqref{eq1.16} $u\geqslant w$ in $D$ and Lemma \ref{lem4.3} with $m=\lambda(\rho)N$ implies
\begin{equation}\label{eq4.18}
\big|\big\{B_{2\rho}(x_{0}) : u \leqslant \bar{\varepsilon} \lambda(\rho) N\big[\Lambda_{\lambda}(x_{0}, \rho)\big]^{-\bar{\beta}_{6}} \bigg(\frac{|E_{\lambda}(\rho)|}{\rho^{n}}\bigg)^{\bar{\beta}_{7}} \big\}\big|\leqslant \big(1-\vartheta\,
\big[\Lambda_{\lambda}(x_{0}, \rho)\big]^{-\bar{\beta}_{5}} \big) |B_{2\rho}(x_{0})|,
\end{equation}
provided that
\begin{equation}\label{eq4.19}
\lambda(\rho)\,N\,\bigg(\frac{|E_{\lambda}(\rho,m)|}{\rho^{n}}\bigg)^{\bar{\beta}_{7}} \geqslant \frac{\rho}{\bar{\varepsilon}} \big[\Lambda_{\lambda}(x_{0}, \rho)\big]^{\bar{\beta}_{6}},
\end{equation}
with some positive $\bar{\beta}_{5}$, $\bar{\beta}_{6}$, $\bar{\beta}_{7}$ and $\vartheta \in (0,1)$ depending only on the data.

We use Lemmas \ref{lem2.3}, \ref{lem2.4} with $\alpha_{0}= \vartheta \big[\Lambda_{\lambda}(x_{0}, \rho)\big]^{-\bar{\beta}_{5}}$ and $\nu$ defined in Lemma \ref{lem2.4}. These lemmas ensure the existence of $C$, $\bar{\beta}_{8}= \bar{\beta}_{1}+
p\bar{\beta}_{1}\bar{\beta}_{5} +p \bar{\beta}_{1}\bar{\beta}_{2} > 0$, where $\bar{\beta}_{1}$, $\bar{\beta}_{2} >0$ were defined in Lemmas \ref{lem2.3} and \ref{lem2.4} depending only on the data, such that
\begin{multline*}
\lambda(\rho) N \bigg(\frac{|E(\rho,m)|}{\rho^{n}}\bigg)^{\bar{\beta}_{7}} \leqslant \lambda(\rho) N \bigg(\frac{|E_{\lambda}(\rho,m)|}{\rho^{n}}\bigg)^{\bar{\beta}_{7}} \leqslant \\ \leqslant \big[\Lambda_{\lambda}(x_{0}, \rho)\big]^{-\bar{\beta}_{6}}\exp\big(C\big[\Lambda_{\lambda}(x_{0},\rho)\big]^{\bar{\beta}_{8}}\big) \{ \inf\limits_{B_{\frac{\rho}{2}}(x_{0})} u  + \rho \}\leqslant\\ \leqslant \exp\big(C\big[\Lambda_{\lambda}(x_{0},\rho)\big]^{1+\bar{\beta}_{8}}\big) \{ \inf\limits_{B_{\frac{\rho}{2}}(x_{0})} u  + \rho \}.
\end{multline*}
This completes the proof of Theorem \ref{th1.6}.

The proof of the weak Harnack-type inequality \eqref{eq1.17} and the upper bound \eqref{eq1.18} is almost the same as for Theorem \ref{th1.2}, inequalities \eqref{eq1.11} and \eqref{eq1.12}, see Section \ref{subsect3.3} for details, we leave them  to the reader.

\end{proof}

\vskip3.5mm
{\bf Acknowledgements.} This work is supported by grants of the National Academy of Sciences of Ukraine (project numbers is 0120U100178) and by the Volkswagen Foundation project "From Modeling and Analysis to Approximation".

\bigskip

CONTACT INFORMATION

\medskip
\textbf{Maria O.~Savchenko}\\Institute of Applied Mathematics and Mechanics,
National Academy of Sciences of Ukraine, \\ \indent Batiouk Str. 19, 84116 Sloviansk, Ukraine\\
Vasyl' Stus Donetsk National University,
\\ \indent 600-richcha Str. 21, 21021 Vinnytsia, Ukraine\\shan\underline{ }maria@ukr.net

\medskip
\textbf{Igor I.~Skrypnik}\\Institute of Applied Mathematics and Mechanics,
National Academy of Sciences of Ukraine, \\ \indent Batiouk Str. 19, 84116 Sloviansk, Ukraine\\
Vasyl' Stus Donetsk National University,
\\ \indent 600-richcha Str. 21, 21021 Vinnytsia, Ukraine\\ihor.skrypnik@gmail.com

\medskip
\textbf{Yevgeniia A. Yevgenieva}
\\ Max Planck Institute for Dynamics of Complex Technical Systems, \\ \indent Sandtorstrasse 1, 39106 Magdeburg, Germany
\\Institute of Applied Mathematics and Mechanics,
National Academy of Sciences of Ukraine, \\ \indent Batiouk Str. 19, 84116 Sloviansk, Ukraine\\yevgeniia.yevgenieva@gmail.com

\end{document}